\newtheorem{lemma}{Lemma}[section]
\newtheorem{theorem}{Theorem}[section]
\theoremstyle{definition}
\newtheorem{definition}{Definition}[section]
\theoremstyle{remark}
\newtheorem{remark}{Remark}[section]
\numberwithin{equation}{section}
\newcommand{\p}{\partial}
\newcommand{\norm}[1]{\left\Vert#1\right\Vert}
\newcommand{\dd}{\mathrm{d}}
\newcommand{\di}{\mathrm{div}}
\newcommand{\grad}{\mathrm{grad}}
\newcommand{\vol}{\mathrm{vol}}
\newcommand{\rmnum}[1]{\mathrm{\romannumeral #1}}
\newcommand{\Rmnum}[1]{\mathrm{\expandafter\@slowromancap\romannumeral#1@}}
\begin{document}
\title[Three-Dimensional Steady Compressible Euler System]{Decomposition of Three-Dimensional Steady Non-isentropic Compressible Euler System and Stability of Spherically Symmetric Subsonic Flows and Transonic Shocks under Multidimensional Perturbations}

\author{Li  Liu}
\address{L. Liu, Shanghai University of International Business and Economics, Shanghai 201620, China} \email{llbaihe@gmail.com}

\author{Gang Xu}
\address{G. Xu, Faculty of Science, Jiangsu University, Zhenjiang, Jiangsu 212013, China} \email{gxu@ujs.edu.cn}

\author{Hairong Yuan}
\address{H. Yuan (corresponding author), Department of Mathematics, 
East China Normal University, Shanghai
200241, China} \email{hryuan@math.ecnu.edu.cn}

\keywords{Euler system, three-dimensional,
 elliptic-hyperbolic composite-mixed type, decomposition, subsonic flow, transonic shock, stability, uniqueness, Venttsel condition, nonlocal elliptic operator}

\subjclass[2000]{35M30, 35R35, 35Q31, 76H05, 76L05, 76N10}
\date{\today}

\begin{abstract}
We develop a method that works in general product Riemannian manifold to decompose the three-dimensional steady full compressible Euler system, which is of elliptic-hyperbolic composite-mixed type for subsonic flows. The method is applied to show stability of spherically symmetric subsonic flows and  transonic shocks in space $\mathbb{R}^3$ under multidimensional perturbations of  boundary conditions.
\end{abstract}
\maketitle

\tableofcontents
\section{Introduction}\label{sec1}

We are interested in the three-dimensional steady non-isentropic compressible Euler system that governing the stationary motion of subsonic flows of perfect gases.  Let $p, \rho$, and $s$ represent the pressure, density of mass, and entropy of the flow respectively. To be specific, we consider polytropic gases, namely the constitutive relation is given by $p=A(s)\rho^\gamma$, with $\gamma>1$ being the adiabatic exponent, and $A(s)=k_0\exp(s/c_\nu)$, for two positive constants $k_0$ and $c_\nu$. The sonic speed is $c=\sqrt{\gamma p/\rho}.$ Let $u$ be the velocity of fluid flow, which is a vector field whose integral curves are called fluid trajectories. The flow is called {\it subsonic} if $|u|<c$, and {\it supersonic} if $|u|>c$. Now, for `$\di$' and `$\grad$' being respectively the divergence and gradient operator, set
\begin{eqnarray}
\varphi&\triangleq&\di (\rho u\otimes u)+\grad\, p-\rho b,\label{eq101}\\
\varphi_1&\triangleq&\di (\rho u),\label{eq102}\\
\varphi_2&\triangleq&\di (\rho E u)-\rho b\cdot u-\rho r.\label{eq103}
\end{eqnarray}
Here  $E\triangleq\frac{1}{2}|u|^2+\frac{c^2}{\gamma-1}$  is the so called  {\it Bernoulli constant} (\cite[p.22]{CF}),
which is the total energy of the fluid per unit mass, and $b$ is the exterior body force vector acting on the fluid (per unit mass), and $r$ is the heat supply (per unit mass and unit time). 
Then the steady, three-dimensional full Euler system for compressible fluids with exterior  force $b$ and heat supply $r$ reads ({\it cf.} 
\cite[(3.3.29) in p.62]{Da}):
\begin{eqnarray}
\varphi=0, \quad \varphi_1=0, \quad \varphi_2=0.\label{eq104}
\end{eqnarray}
They represent respectively the conservation law of  linear momentum, mass and energy.

The Euler system \eqref{eq104} fits into the general form of multidimensional balance laws. For supersonic flows, it could be written as a symmetric hyperbolic system \cite[Section 2.1]{wangyuan2014}, for which a local well-posedness theory of (piecewise) classical solutions is now available  (see \cite{BS2007, Da}). For subsonic flows without stagnation points ({\it i.e.} $0<|u|<c$), system \eqref{eq104} is of elliptic-hyperbolic composite-mixed type \cite[p.538]{CY}. In fact, using the standard Descartes coordinates $(z^1, z^2, z^3)$ of Euclidean space $\mathbb{R}^3$, one could  write \eqref{eq104} (with $b=0$ and $r=0$) in the matrix form
\begin{eqnarray*}\label{eqsys}
A_1(U)\p_{z^1}U+A_2(U)\p_{z^2}U+A_3(U)\p_{z^3}U=0.
\end{eqnarray*}
Here $U=(u, p, s)^\top\in\mathbb{R}^5$, and $u=(u_1, u_2, u_3)^\top\in\mathbb{R}^3$; for $n=(n_1, n_2, n_3)^\top\in\mathbb{R}^3$,  the $5\times 5$ real symmetric matrices  $A_1, A_2$ and  $A_3$ are  given by
$$\sum_{j=1}^3A_j(U)n_j=\left(
                                        \begin{array}{ccc}
                                          \frac{u\cdot n}{\rho c^2} & n^\top& 0\\
                                          n& \rho(u\cdot n)I_3& 0\\
                                          0& 0& u\cdot n\\
                                        \end{array}
                                      \right),$$
where $I_3$  is the $3\times 3$ identity matrix. If $u_1\in (0, c)$ and $|u|<c$, then
for any $(\xi,\eta)\in\mathbb{R}^2\setminus\{(0,0)\}$, $A_1(U)$ is nonsingular and  $A_1(U)^{-1}(A_2(U)\xi+A_3(U)\eta)$ is diagonalizable over $\mathbb{C}$, with a pair of conjugate imaginary eigenvalues $\lambda_\pm=\lambda_R\pm\sqrt{-1}\lambda_I$ of multiplicity one, and a real eigenvalue $\lambda_0=\frac{u_2\xi+u_3\eta}{u_1}$ of multiplicity three. Here
$$\lambda_R=\frac{u_1}{(u_1)^2-c^2}(u_2\xi+u_3\eta)\quad\text{and}\quad\lambda_I=\frac{c\sqrt{(c^2-(u_1)^2)(\xi^2+\eta^2)-(u_2\xi+u_3\eta)^2}}
{(u_1)^2-c^2}$$
are real. A  system of first-order partial differential equations  with such properties is called of {\it elliptic-hyperbolic composite-mixed type}.
To our best knowledge,  up to now there is no any theory for multi-dimensional systems of such nonclassical type. (See \cite{dz} for some results about the two-independent-variable case, which is based on the theory of generalized analytic functions.)

The basic idea to treat such elliptic-hyperbolic composite-mixed type system is to decompose it to an elliptic-hyperbolic coupled system,  which consists of several hyperbolic equations coming from the real eigenvalues, and some elliptic equations originating from the imaginary eigenvalues. This was proposed by S. Chen in \cite{Chen2005, Chen2006} in studying a transonic shock problem for the two-dimensional steady compressible Euler system. He achieved this by introducing Lagrangian coordinates via conservation of mass and using characteristic decomposition method. This was then generalized by other authors, and paved the way for a quite well developed theory of steady subsonic flows and transonic shocks in two dimensional case (see, for example,  \cite{LY, Chen-Chen-Feldman2007, Yu1, fangliuyuan, lixinyin,yuan2006} and references therein).

An effective decomposition for the three-dimensional Euler system \eqref{eq104} is apparently more difficult.  The third author proposed a method in \cite{Yuan2007} 
to decompose the  Euler system with cylindrical symmetry.  This was further developed  in \cite{CY}, where the authors used characteristic decomposition and pseudo-differential calculus to decompose \eqref{eq104} to a second order elliptic equation of pressure coupled with four transport equations of velocity and entropy, and solved a problem on the instability of a class of transonic shocks in three dimensional straight duct under perturbations of the back pressure ({\it i.e.}, the pressure given on the exit of the duct). (The case of three dimensional straight duct with general smooth cross section was considered by S. Chen in \cite{Chentams}.) The merit of this problem is that the background transonic shock solution is piecewise constant, which simplifies the decomposition since many terms can be considered directly as higher order terms in linearization. However, if we wish to deal with non-constant background solutions, such as spherically symmetric subsonic flows or  transonic shocks established in \cite{Yu3}, which turns out to be very important in understanding transonic shock phenomena in divergent nozzles \cite{LY, lixinyin}, we need to refine the decomposition to handle these extra terms. In \cite{CY2013}, the authors proposed another method to decompose the Euler system \eqref{eq104}. However, unlike \cite{CY}, it is quite difficult to show that the decomposed problems are equivalent to the Euler system, since there involve too much differentiations of the Euler system.

In this work, we combine the ideas presented in \cite{CY,CY2013} to decompose the Euler system \eqref{eq104} to a second order equation of pressure plus four transport equations of entropy, Bernoulli constant, and tangential velocity components respectively, and show that under appropriate boundary conditions, for quite regular solutions,  the decomposed system is equivalent to the Euler system \eqref{eq104} (see Theorem \ref{thm21}). Then we apply this method to study two typical problems in mathematical gas dynamics, namely the stability and uniqueness of spherically symmetric subsonic flows and transonic shocks under multidimensional perturbations of suitably given boundary conditions. The main results are Theorem \ref{thm41} and Theorem \ref{thm501}. It is remarkable that as in \cite{CY2013}, we utilized some tools from differential geometry to avoid the topological singularity of the spherical coordinates of $\mathbb{R}^3$, and our decomposition works in general product Riemannian manifold.  In the studies of transonic shocks, we discovered many beautiful intrinsic structures of the Rankine-Hugoniot conditions and the Euler system, and there arise problems like solving div-curl system on sphere ({\it cf.} \eqref{eq5100}), and a nonclassical nonlocal elliptic equation with  Venttsel boundary condition ({\it cf.} \eqref{eq564}). It should be noted that
the methods and ideas presented in this work can be used to treat some other typical  gas dynamics problems involving subsonic flows. 
We also remark that due to the carefully designed decompositions, we avoid loss of derivatives and the nonlinear (free boundary) problems could be solved just by using a standard generalized Banach fixed point theorem.

We note that for subsonic flows in a straight duct with rectangular cross section, Chen and Xie \cite{xiechen} found an approach different from ours to deal with isentropic Euler system under certain boundary conditions on the entry and exit. 
We also recommend the paper \cite{weng} of Weng for many interesting observations on the equations \eqref{eq104}.  The papers \cite{CY, Chentams, CY2013, xiechen,weng} are the only works we know investigating the genuinely three dimensional steady subsonic Euler system.


We remark in passing that, as a byproduct, we also proved in Theorem \ref{thm31} the global uniqueness of spherically symmetric subsonic flows in the class of $C^2$ functions; namely, any $C^2$ solution to the Euler system \eqref{eq104} with suitable spherically symmetric boundary data must be itself spherically symmetric. It is reduced to the case of irrotational flow by studying the transport equations of vorticity.  However, similar global uniqueness result for the transonic shocks is unknown, even for the simpler piecewise constant  case studied in \cite{CY}, or the two-dimensional cylindrical symmetric case (the case studied in \cite{LY}). (Note that the global uniqueness of several piecewise constant transonic shocks had been proved for two-dimensional steady compressible Euler system in \cite{fangliuyuan}.)

The rest of the paper is organized as follows. In Section \ref{sec2} we prove a general theorem (Theorem \ref{thm21}) on the decomposition of Euler system \eqref{eq104}. In Section \ref{sec3} we formulate the problem of stability and uniqueness of spherical subsonic flows, and  prove Theorem \ref{thm31} on the global uniqueness. In Section \ref{sec4}, we apply Theorem \ref{thm21} to prove Theorem \ref{thm41}. Section \ref{sec5} is devoted to formulating the transonic shock problems and proving Theorem \ref{thm501} on the stability of certain spherically symmetric transonic shock solutions under general perturbations of upcoming supersonic flows and back pressure. Finally, Section \ref{sec6} is an appendix, which contains solvability and estimates of  transport equations, and a div-curl system on sphere. We also present some details of estimates of  higher order terms in suitable function spaces. These higher order terms appear during linearization, and their estimates turn out to be  quite straightforward.

\section{A decomposition of steady Euler system}\label{sec2}

We develop here a strategy to decompose \eqref{eq104}, the three-dimensional steady full compressible Euler system, into a second order equation for pressure  and four transport equations. Then we show that any quite regular solution to the decomposed system is also a solution to the  Euler system.

For convenience of applications in mind, we would like to write the
decomposition in a quite general way by using some terminologies
from differential geometry. For a given Riemannian manifold
$\mathcal{M}$ with metric tensor $G$, we use $\dd$ to denote the
exterior differential operator and $D_u\omega$  the covariant
derivative of  a tensor field $\omega$ with respect to a vector
field $u$ in $\mathcal{M}$. $\mathcal{L}_u \omega$ is the Lie
derivative of $\omega$ with respect to $u$. 
As a convention, repeated Roman indices will be summed up
for $0,1,2$, while repeated Greek indices are to be summed over for
$1,2$. We also use standard notations such as $C^k$, $C^{k,\alpha}(\Omega)$ and $H^s(\Omega)$ to denote respectively the class of $k$-times ($k\in\mathbb{N}\cup\{0\}$) continuously differentiable functions, and the H\"{o}lder space of $C^k$ functions on an open (or closed) set $\Omega$ whose all $k$-th  order derivatives are H\"{o}lder continuous in $\Omega$ with exponent $\alpha\in(0,1]$, and the Sobolev space $W^{s,2}(\Omega)$ with $s\ge0$.

\subsection{Some well-known reductions}\label{sec201}

We firstly derive some well-known equations from the Euler
system \eqref{eq104} which are valid only for $C^1$ flows without vacuum ($\rho>0$).

The conservation of mass $\varphi_1=0$ can be written (equivalently for $C^1$ flows) as
\begin{eqnarray*}
\varphi_1=D_u \rho+\rho\, \di\, u=0.
\end{eqnarray*}
Then the conservation of energy $\varphi_2=0$ becomes the Bernoulli law
\begin{eqnarray}\label{eq21}
\varphi_3\triangleq\frac{1}{\rho}({\varphi_2-E\varphi_1})=D_u E-G(b,u)-r=0.
\end{eqnarray}
By the identity $\di (u\otimes v)=(\di\, v)u+ D_v u$
for two vector fields $u$ and $v$, the conservation of momentum
$\varphi=0$ turns out to be
\begin{eqnarray}\label{eq22}
\varphi_0\triangleq\frac{1}{\rho}({\varphi-\varphi_1 u})= D_u
u+\frac{1}{\rho}\grad\, p-b=0.
\end{eqnarray}

Since $|u|^2=G(u,u)$, and $\frac 12 D_u\big(|u|^2\big)=G(D_u u, u)=G(\varphi_0, u)-\frac 1\rho D_up+G(u,b),$
we have
\begin{eqnarray}\label{eq23}
\varphi_4\triangleq\frac{\gamma-1}{\rho^{\gamma-1}}\big(\varphi_3-G(\varphi_0,
u)\big)=D_uA(s)-\frac{\gamma-1}{\rho^{\gamma-1}}r=0;
\end{eqnarray}
namely, for $r=0$, the well-known fact that the entropy is invariant along flow trajectories for $C^1$ flows.
From this, the conservation of mass may be written as
\begin{eqnarray}\label{eq27}
\bar{\varphi}_1\triangleq\frac{\varphi_1}{\rho}+\frac{\varphi_4}{\gamma A(s)}=\di\, u+\frac{D_up}{\gamma p}-\frac{\gamma-1}{c^2}r=0.
\end{eqnarray}

We now derive a transport equation for vorticity.
For a vector field $u=u^i\p_i$ in $\mathcal{M}$, we  use $\bar{u}=u^jG_{ij}\dd x^i$ to denote its corresponding $1$-form. Then by the formula $ \mathcal{L}_u \bar{u}=D_u\bar{u}+\dd(\frac{|u|^2}{2})$, \eqref{eq22} is equivalent to
\begin{eqnarray}
\bar{\varphi}_0=D_u \bar{u}+\frac{\dd p}{\rho}-\bar{b}=\mathcal{L}_u\bar{u}-\dd\left(\frac{|u|^2}{2}\right)+\frac{\dd p}{\rho}-\bar{b}=0.
\label{eq24}
\end{eqnarray}
Noting that $\dd$ commutes with Lie derivative and $\dd^2=0$, this implies
\begin{equation}\label{eq25}
\mathcal{L}_u(\dd \bar{u})=-\dd\left(\frac{1}{\rho}\right)\wedge
\dd p+\dd\bar{b}.
\end{equation}

\subsection{Second-order equation for pressure}\label{sec202}

Straightforward computation yields the following tensor identity:
\begin{eqnarray}\label{eqtensor}
\di (D_uu)-D_u(\di u)=C_1^1C^1_2(Du\otimes Du)+\mathrm{Ric}(u,u),
\end{eqnarray}
where $Du$ is the covariant differential of the vector field $u$, $\mathrm{Ric}(\cdot, \cdot)$ is the Ricci curvature tensor, and $C^i_j(T)$ is the contraction on the upper $i$
and lower $j$ indices of a tensor $T$.  From  \eqref{eq22} and \eqref{eq27}, we get
\begin{eqnarray*}
\di (D_uu)-D_u(\di\, u)=\di\,
\varphi_0-D_u\bar{\varphi}_1+D_u\big(\frac{D_u p}{\gamma p}-\frac{\gamma-1}{c^2}r\big)-\di
\big(\frac{\grad\, p}{\rho}-b\big).
\end{eqnarray*}
It follows the identity:
\begin{eqnarray}\label{eq29}
D_u\bar{\varphi}_1-\di \varphi_0
&=&D_u\left(\frac{D_u p}{\gamma p}\right)-\di \left(\frac{\grad\,
p}{\rho}\right)\nonumber\\
&&- C^1_1C^1_2(D u\otimes D
u)-\mathrm{Ric}(u,u)-(\gamma-1)D_u\left(\frac{r}{c^2}\right)+\di\,b.
\end{eqnarray}
We remark that in many applications, $\mathcal{M}$ is a connected open subset of $\mathbb{R}^3$ with metric $G$ induced from the standard metric of $\mathbb{R}^3$, so $\mathrm{Ric}(u,u)\equiv0.$

\subsection{The decomposed system and its equivalence to  Euler system} \label{sec203}

We need some global information of $\mathcal{M}$ to show that any regular solution of suitably combined reduced equations is also a solution of the Euler system \eqref{eq104}. To be specific, we assume that $\mathcal{M}$ is a product Riemannian manifold given by
$\mathcal{M}=(a,b)\times M,$
where $M$ is a closed surface, and $(a,b)\subset\mathbb{R}$ is an open interval. Then $\p\mathcal{M}$,  the boundary of $\mathcal{M}$, is given by $M_0\cup M_1$, with
$M_0=\{a\}\times M$ and $M_1=\{b\}\times M$. We also suppose that $(x^0, x^1,x^2)$ with $x^0\in(a, b)$, $(x^1,x^2)\in M$ is a local coordinates system on $\mathcal{M}$. The following is the first main theorem of this paper.

\begin{theorem}\label{thm21}
In addition to the above assumptions on $\mathcal{M}$, for polytropic gases, suppose also that $p \in C^2(\mathcal{M})\cap C^1(\overline{\mathcal{M}})$, $\rho, u\in C^1(\overline{\mathcal{M}})$, and $\rho\ne0, u^0=G(u,\p_0)\ne0$ in $\overline{\mathcal{M}}$. Then  $p, \rho, u$ solve the Euler system \eqref{eq104} in $\mathcal{M}$ if and only if they satisfy the following equations in $\mathcal{M}$:
\begin{eqnarray}
&&D_uA(s)-\frac{\gamma-1}{\rho^{\gamma-1}}r=0,\label{eq212}\\
&&D_uE-G(b,u)-r=0,\label{eq212ber}\\
&&D_u\left(\frac{D_u p}{\gamma p}\right)-\di \left(\frac{\grad\,
p}{\rho}\right)- C^1_1C^1_2(D u\otimes D u)-\mathrm{Ric}(u,u)-(\gamma-1)D_u\left(\frac{r}{c^2}\right)+\di\,b\nonumber \\ &&\quad\qquad+L^0(\frac{D_up}{\gamma p}+\di\,u-\frac{\gamma-1}{c^2}r)+L^1(D_uE-G(b,u)-r)\nonumber\\
&&\qquad\qquad\qquad\quad+L^2(D_uA(s)-\frac{\gamma-1}{\rho^{\gamma-1}}r)
+L^3(D_uu+\frac{\grad\,p}{\rho}-b)=0,\label{eq210}\\
&&G(D_u u+\frac{\grad\, p}{\rho}-b,\p_\alpha)=0,\quad\alpha=1,2\label{eq213}
\end{eqnarray}
and the boundary condition
\begin{multline}
\frac{D_u p}{\gamma p}+\di\,u-\frac{\gamma-1}{c^2}r+L_1(D_uE-G(b,u)-r)+L_2(D_uA(s)-\frac{\gamma-1}{\rho^{\gamma-1}}r)\\
+L_3(D_uu+\frac{\grad\,p}{\rho}-b)=0 \qquad \text{on}\quad M_1.\label{eq215}
\end{multline}
Here $L^0(\cdot)$ is a linear function, and  $L^k(\cdot)$, $L_k(\cdot)$ are smooth functions so that $L^k(0)=0, L_k(0)=0$ for $k=1,2,3$.
\end{theorem}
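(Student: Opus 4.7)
The plan is to prove both directions of the equivalence by unwinding the algebraic reductions of Section~\ref{sec201} and the identity \eqref{eq29}, noting that the entire content of \eqref{eq210} is that the quantity $D_u\bar\varphi_1-\di\,\varphi_0$ plus ``correction'' $L^k$-terms vanishes. The forward (``only if'') direction is essentially automatic: when the Euler system holds, $\varphi=\varphi_1=\varphi_2=0$, hence also $\varphi_0=\bar\varphi_1=\varphi_3=\varphi_4=0$ by the identities already derived. All the $L^k$ and $L_k$ arguments vanish, and since \eqref{eq29} is an identity the remaining terms in \eqref{eq210} coincide with $D_u\bar\varphi_1-\di\,\varphi_0=0$; equations \eqref{eq213} and \eqref{eq215} are then immediate.

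The interesting direction is ``if.'' First, \eqref{eq212} and \eqref{eq212ber} give $\varphi_4=0$ and $\varphi_3=0$, and \eqref{eq213} gives $G(\varphi_0,\p_\alpha)=0$ for $\alpha=1,2$. The key algebraic step is then to exploit the definition \eqref{eq23} of $\varphi_4$: since $\varphi_3=\varphi_4=0$, we deduce $G(\varphi_0,u)=0$. Expanding $u=u^0\p_0+u^\alpha\p_\alpha$ in the product metric on $\mathcal M=(a,b)\times M$ (for which $G(\p_0,\p_\alpha)=0$), this becomes $u^0\,G(\varphi_0,\p_0)=0$. Since $u^0\neq0$ in $\overline{\mathcal M}$ by hypothesis, $G(\varphi_0,\p_0)=0$, and combined with \eqref{eq213} we conclude $\varphi_0=0$ in $\mathcal M$.

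Once $\varphi_0=\varphi_3=\varphi_4=0$, the $L^1,L^2,L^3$ terms in \eqref{eq210} vanish (because these $L^k$'s vanish at the zero argument) and $\di\,\varphi_0=0$; using the identity \eqref{eq29} to rewrite the remaining terms of \eqref{eq210}, it reduces to the linear transport equation
\begin{equation*}
D_u\bar\varphi_1+L^0(\bar\varphi_1)=0 \quad\text{in } \mathcal M.
\end{equation*}
Similarly, the boundary condition \eqref{eq215} collapses to $\bar\varphi_1=0$ on $M_1$. Because $u^0\neq0$ has constant sign, the integral curves of $u$ are everywhere transverse to the level sets $\{x^0=\text{const}\}$ and every point of $\mathcal M$ lies on a unique trajectory meeting $M_1$; the homogeneous linear ODE obtained by restricting the transport equation to such a trajectory, with zero value at the $M_1$-endpoint, forces $\bar\varphi_1\equiv0$ in $\mathcal M$. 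Combined with $\varphi_4=0$ and the identity \eqref{eq27}, this yields $\varphi_1=0$; together with $\varphi_3=0$ and the definition in \eqref{eq21} we obtain $\varphi_2=0$, and together with $\varphi_0=0$ and \eqref{eq22} we obtain $\varphi=0$. Hence \eqref{eq104} holds.

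The only real obstacle is recognizing that the decomposition is designed precisely so that the tangential equations \eqref{eq213} plus the two scalar conservation laws \eqref{eq212}–\eqref{eq212ber} suffice, via the Bernoulli--entropy identity \eqref{eq23} and the orthogonality of the product metric, to recover the ``missing'' normal component of the momentum equation; after that, \eqref{eq210} plays the role of a single scalar transport equation for $\bar\varphi_1$, closed by the boundary condition \eqref{eq215}. No delicate estimate is needed — the argument is purely algebraic together with uniqueness for a linear first-order ODE along the characteristic flow.
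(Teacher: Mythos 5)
Your proof follows the paper's own argument essentially line by line: both directions are obtained by comparing \eqref{eq212}--\eqref{eq215} to the identities \eqref{eq21}, \eqref{eq23}, \eqref{eq27}, \eqref{eq29} and \eqref{eq22}; the ``if'' direction then uses $\varphi_3=\varphi_4=0$ together with \eqref{eq213} to get $G(\varphi_0,u)=u^0\,G(\varphi_0,\p_0)=0$ and hence $\varphi_0=0$ since $u^0\neq0$, after which \eqref{eq210} and \eqref{eq215} collapse to the linear Cauchy problem $D_u\bar\varphi_1+L^0(\bar\varphi_1)=0$ in $\mathcal M$, $\bar\varphi_1=0$ on $M_1$, giving $\bar\varphi_1\equiv0$ and thus $\varphi_1=\varphi_2=\varphi=0$. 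The only difference is cosmetic: you spell out the transversality/characteristic-flow reasoning behind the transport step, which the paper leaves implicit by simply invoking linearity of $L^0$.
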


\begin{proof} The necessity is obvious from the above deductions. For sufficiency,
by comparing  \eqref{eq212} with \eqref{eq23}, \eqref{eq212ber} with \eqref{eq21}, \eqref{eq210} with \eqref{eq29}, \eqref{eq213} with \eqref{eq22}, and \eqref{eq215} with \eqref{eq27}, we conclude that the above equations \eqref{eq212}--\eqref{eq215} are equivalent to the following expressions:
\begin{eqnarray}
&\varphi_4=0=\varphi_3-G(\varphi_0, u)&\text{in}\quad\mathcal{M},\label{eqadd101}\\
&\varphi_3=0=\varphi_2-E\varphi_1&\text{in}\quad\mathcal{M},\label{eqadd101ber}\\
&D_u\bar{\varphi}_1-\di \varphi_0+L^0(\bar{\varphi}_1)+L^1(\varphi_3)+L^2(\varphi_4)+L^3(\varphi_0)=0 &\text{in}\quad\mathcal{M},\label{eq216}\\
&G({\varphi}_0, \p_\alpha)=0, \quad\alpha=1,2 &\text{in}\quad\mathcal{M},\label{eq217}\\
&\bar{\varphi}_1+L_1(\varphi_3)+L_2(\varphi_4)+L_3(\varphi_0)=0 &\text{on}\quad {M_1}. \label{eq221}
\end{eqnarray}
From \eqref{eqadd101} and \eqref{eqadd101ber}  we see $\varphi_4=0, \varphi_3=0$, hence $G(\varphi_0, u)=0$. Set  $u=u^k\p_k$. Then \eqref{eq217} yields $G(\varphi_0, u^0\p_0)=0$. Note that $u^0$ never vanishes in $\mathcal{M}$ by our assumption, one concludes that $G(\varphi_0, \p_0)=0$, hence $\varphi_0=0$. Therefore \eqref{eq216} is reduced to $D_u\bar{\varphi}_1+L^0(\bar{\varphi}_1)=0,$ and \eqref{eq221} becomes to be $\bar{\varphi}_1=0$ on $M_1$. Recalling that $L^0$ is a linear function, these
imply that $\bar{\varphi}_1=0$ in $\mathcal{M}$. So by $\varphi_4=0$, we have $\varphi_1=0$, and hence from $\varphi_3=0$ to get $\varphi_2=0$. Finally, by \eqref{eq22}, it follows that $\varphi=0$. So equations \eqref{eq104} hold as desired.
\end{proof}

\begin{remark}
It is clear that we could propose the condition \eqref{eq215} on $M_0$
and the same conclusion holds. Later in the studies of transonic shocks we will actually restrict \eqref{eq215} on a shock-front. All the functions $L^k, L_k$ in the theorem are to be specified in the applications.
\end{remark}

\begin{remark}
We note that unlike \cite{CY2013}, the regularity of pressure here is one order smoother than other unknowns. An iteration scheme works to solve the nonlinear problem,  due to the special structure of the equation \eqref{eq210}, where second order derivatives of pressure occur, while for other unknowns (such as $\rho, u$), only first order derivatives appear.
\end{remark}


\begin{remark}
We may consider the case that $M$ is a closed manifold, such as $\mathbb{T}^2$,  the flat $2$-torus given by $\mathbb{R}^2/\mathbb{Z}^2$ ($\mathbb{Z}$ is the set of integers), or $S^2$, the unit 2-sphere in $\mathbb{R}^3$. For the former case, the existence and  stability of constant subsonic flow had been studied in \cite{weng} and \cite{xiechen} (for isentropic gas), and the transonic shock problem was solved in \cite{CY}. These could be handled in a similar but simpler way than the latter case, since the special solution is (piecewise) constant. So in this paper we focus on the case that $M=S^2$, for which the background solution is variable, and many interesting new phenomena, like nonlocal elliptic problems, might occur (see Section \ref{sec5}).
\end{remark}

\begin{remark}
We consider in this section decomposition of  \eqref{eq104} with exterior force and heat supply for convenience to applications to other problems later. In the rest of this paper, like all the references mentioned above for subsonic flows and transonic shocks, we only consider the Euler system \eqref{eq104} with exterior force $b=0$ and heat supply $r=0$. 
\end{remark}

\section{Existence and uniqueness of spherically symmetric subsonic flows}\label{sec3}

In the following two sections, we apply results in Section \ref{sec201} and Theorem \ref{thm21} to a physically interesting problem, namely, the uniqueness and stability of spherically symmetric subsonic flows in $\mathbb{R}^3$. We firstly formulate rigorously the boundary value problem to be concerned,  then show the existence of spherically symmetric subsonic solutions under suitable boundary conditions, and prove that such solutions are unique in the class of $C^2$ flows. Then in Section \ref{sec4}, we prove that some of these spherically symmetric subsonic solutions are stable under general multidimensional perturbations of boundary conditions.

\subsection{Formulation of subsonic flow problem for steady compressible Euler system}

We now specify $\mathcal{M}=(r_0, r_1)\times S^2$ for two constants $0<r_0<r_1<\infty.$ So $\mathcal{M}$ is a spherical shell in $\mathbb{R}^3$.
Let $r=x^0\in (r_0,r_1)$, and $x'=(x^1, x^2)$ be a  (local) spherical coordinates on $S^2$. The Euclidean metric of $\mathbb{R}^3$ in the coordinates $(x^0, x')$ takes the form
$G=G_{ij}\dd x^i\otimes \dd x^j=\dd x^0\otimes \dd x^0+(x^0)^2g,$
where $g=\dd x^1\otimes \dd x^1+(\sin x^1)^2\dd x^2\otimes \dd x^2$
is the standard metric of $S^2$.
For later reference, we list below all the nonzero Christoffel symbols associated with $G$ (note that $\Gamma_{jk}^i=\Gamma^i_{kj}$):
\begin{gather*}
\Gamma_{11}^0=-x^0, \quad \Gamma^0_{22}=-x^0(\sin x^1)^2, \quad
\Gamma^1_{01}=\Gamma^1_{10}=\frac{1}{x^0},\\
\Gamma^1_{22}=-\sin x^1\cos x^1, \quad
\Gamma^2_{02}=\Gamma^2_{20}=\frac{1}{x^0}, \quad
\Gamma^2_{12}=\Gamma^2_{21}=\cot x^1.
\end{gather*}
We also set $\sqrt{G}=(x^0)^2\sin x^1$, and $(G^{ij})=(G_{ij})^{-1}$. 

Now write the velocity as $u=u^0\p_0+u',$ with $u'$ a $x^0$-dependent vector field on $S^2$, which is $u'=u^1\p_1+u^2\p_2$ in a local coordinates system.
We call $u^0$ the {\it normal velocity}  and $u'$ the {\it tangential velocity}.  We prescribe the pressure on  $M_0$:
\begin{eqnarray}
p=p_0(x'), \label{eq31}
\end{eqnarray}
and the following boundary conditions on $M_1$:
\begin{eqnarray}\label{eq32}
E=E_1(x'),\quad s=s_1(x'), \quad u'=u'_1(x').
\end{eqnarray}
Here $u'_1(x')$ is a given vector field on $S^2$, while $E_1, s_1$ and $p_0$ are given functions on $S^2$.

\medskip
\fbox{
\parbox{0.90\textwidth}{
Problem (S): Solve the Euler system \eqref{eq104} in $\mathcal{M}$, subjected to the boundary conditions \eqref{eq31}--\eqref{eq32}.}}
\medskip

\begin{remark}
Choosing suitable boundary conditions for the steady subsonic compressible Euler system to formulate a well-posed problem is a delicate issue both for theoretical studies and numerical computations, due to its nature as a nonlinear elliptic-hyperbolic composite-mixed system. It had been shown that some choices of boundary conditions, such as given pressure both on $M_0$ and $M_1$, will lead to ill-posed problems, even in the two-space-dimension case ({\it cf.} \cite{Yu1}). The conditions  \eqref{eq31}--\eqref{eq32} are compatible with the decomposition stated in Theorem \ref{thm21}. They are also motivated by  previous studies of transonic shocks and subsonic flows ({\it cf.} \cite{Yu1, LY, Liu2010} etc.). For example, giving a boundary condition of pressure is a physically interesting case in studying nozzle flows \cite[p.373, p.385]{CF}; while the Bernoulli constant $E$ and the entropy $s$ are usually determined by the upstream or downstream far-field flows.
\end{remark}

\subsection{Existence of spherically symmetric subsonic flows}
We now consider the following special case of Problem (S).

\medskip
\fbox{
\parbox{0.90\textwidth}{
Problem (S1): { Solve the Euler system \eqref{eq104} in $\mathcal{M}$,  subjected to the boundary conditions \eqref{eq31}--\eqref{eq32},
where $p_0, E_1, s_1$ are all constants, and  $u_1'\equiv0$.}}}
\medskip

We can construct a special symmetric solution to this problem.
Suppose that the solution depends only on $r$,  and the tangential velocity $u'$ is identically zero in $\mathcal{M}$. Then the Euler system \eqref{eq104} is reduced to the following ordinary differential equations \cite{Yu3}:
\begin{eqnarray}
&&\frac{\dd u}{\dd r}=\frac{2c^2u}{r(u^2-c^2)},\label{eq34}\\
&&\frac{\dd \rho}{\dd r}=-\frac{2\rho u^2}{r(u^2-c^2)},\label{eq35}\\
&&\frac{\dd p}{\dd r}=-\frac{2\rho
c^2u^2}{r(u^2-c^2)}.\label{eq36}
\end{eqnarray}
Here, for simplicity, instead of $u^0(r)$, we have written  $u=u(r)$ to be the normal velocity.
Let $M=u/c$ be the Mach number. Then it solves
\begin{eqnarray}\label{eqmachode}
\frac{\dd M}{\dd r}=\frac{1}{r}\frac{M[2+(\gamma-1)M^2]}{M^2-1}.
\end{eqnarray}
So the flow is always subsonic if it is subsonic at the entry $r=r_0$ and $u$ is positive in $\mathcal{M}$. Integrating \eqref{eqmachode} yields that
\begin{eqnarray}\label{eq37add2}
r=c_1\frac{[2+(\gamma-1)M^2]^{\frac14+\frac{1}{2(\gamma-1)}}}{\sqrt{M}}.
\end{eqnarray}
Also note that
$$\frac{\dd u}{\dd M}=\frac{2u}{2M+(\gamma-1)M^3},$$
hence
$$u=c_2\frac{M}{\sqrt{2+(\gamma-1)M^2}}.$$
Both $c_1, c_2$ are constants to be determined by boundary conditions.

\begin{lemma}\label{lem31}
Suppose that
\begin{eqnarray}\label{eq37add}
p_0>\left(\frac{2}{\gamma}\cdot\frac{\gamma-1}{\gamma+1}\right)^{\frac{\gamma}{\gamma-1}}
E_1^\frac{\gamma}{\gamma-1}A(s_1)^{\frac{-1}{\gamma-1}},\quad u^0(r_0)>0.
\end{eqnarray}
Then there is one and only one symmetric subsonic solution $\underline{U}$ to Problem (S1). Furthermore, the solution is real analytic.
\end{lemma}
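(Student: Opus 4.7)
The plan is to reduce Problem (S1) to explicit algebra and a single scalar ordinary differential equation by exploiting conservation of entropy and of the Bernoulli constant along radial trajectories. For a symmetric solution with $u'\equiv 0$, the reductions \eqref{eq21} and \eqref{eq23} (with $b=0$, $r=0$) become $\frac{\mathrm{d}}{\mathrm{d}r}E=0$ and $\frac{\mathrm{d}}{\mathrm{d}r}A(s)=0$; combined with the boundary data $E(r_1)=E_1$ and $s(r_1)=s_1$, which are constant on $S^2$, this forces $E\equiv E_1$ and $s\equiv s_1$ throughout $\mathcal{M}$. Using $p=A(s_1)\rho^\gamma$ and $c^2=\gamma p/\rho$, I can then express every thermodynamic quantity as a function of the Mach number $M=|u|/c$ alone via
\begin{equation*}
c^2=\frac{2(\gamma-1)E_1}{2+(\gamma-1)M^2},\qquad \rho=\Bigl(\frac{c^2}{\gamma A(s_1)}\Bigr)^{1/(\gamma-1)},\qquad p=A(s_1)\rho^\gamma.
\end{equation*}

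Next I would use the pressure condition at $r_0$ to fix the Mach number at the entry. Differentiating shows $M\mapsto p$ is strictly decreasing on $(0,\infty)$, with sonic value
\begin{equation*}
p_*=\Bigl(\frac{2}{\gamma}\cdot\frac{\gamma-1}{\gamma+1}\Bigr)^{\gamma/(\gamma-1)}E_1^{\gamma/(\gamma-1)}A(s_1)^{-1/(\gamma-1)},
\end{equation*}
which matches exactly the right-hand side of \eqref{eq37add}. Hence $p_0>p_*$ yields a unique $M_0\in(0,1)$ with $p(M_0)=p_0$; together with $u^0(r_0)>0$ this fixes $u(r_0)=M_0\, c(r_0)>0$ unambiguously.

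Finally I would solve the Mach-number ODE \eqref{eqmachode} with initial datum $M(r_0)=M_0$. Its right-hand side is real analytic on $M\in(0,1)$, and the implicit solution \eqref{eq37add2} (with the constant $c_1$ determined by $M_0$ and $r_0$) shows that $r\mapsto M(r)$ is strictly decreasing on $[r_0,\infty)$ with range contained in $(0,M_0]$, since $r\to\infty$ as $M\to 0^+$. In particular $M(r)\in(0,M_0]\subset(0,1)$ on all of $[r_0,r_1]$, and $u(r)=c_2 M(r)/\sqrt{2+(\gamma-1)M(r)^2}>0$ throughout, so the resulting triple $(p,\rho,u)$ is a genuinely subsonic solution of Problem (S1), real analytic on $\overline{\mathcal{M}}$ by the Cauchy–Kovalevskaya theorem (or directly from the explicit formulas). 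Uniqueness inside the spherically symmetric subsonic class follows because each determination above is unique: the boundary data fix $E_1,s_1$; the inequality $p_0>p_*$ fixes $M_0$; and $u^0(r_0)>0$ fixes the sign.

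The only step requiring real care is verifying strict monotonicity $p'(M)<0$ together with the precise identification of the sonic value with the constant in \eqref{eq37add}; once those algebraic identities are in hand, every other assertion reduces to straightforward computation on a scalar autonomous ODE.
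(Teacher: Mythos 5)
Your proposal is correct and follows essentially the same route as the paper: deduce $E\equiv E_1$, $s\equiv s_1$ from the transport equations, use condition \eqref{eq37add} to determine the subsonic entry Mach number $M_0\in(0,1)$, and solve the resulting radial ODE, with analyticity read off from the explicit formulas. The only difference is that you supply the algebraic verification the paper leaves implicit---namely that $p$ is strictly decreasing in $M$ (via $c^2$ decreasing and $\rho=(c^2/\gamma A(s_1))^{1/(\gamma-1)}$) and that the sonic pressure $p(M{=}1)$ equals the critical constant on the right of \eqref{eq37add}, which is a genuine and worthwhile addition even though it does not change the structure of the argument.
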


\begin{proof}
1. From \eqref{eq212} and \eqref{eq212ber}, we see $E\equiv E_1$ and $s\equiv s_1$ in $\mathcal{M}$. So the assumption \eqref{eq37add} means that the flow is subsonic at the entry $r=r_0$. By $p_0$ and $s_1$ we could solve $\rho=\rho_0>0$ at $r=r_0$ from $p=A(s)\rho^\gamma$, and then $u=u_0=u^0(r_0)>0, M=M_0\in(0,1)$ at $r=r_0$ by the fact that $E_1=\frac12 u^2+\frac{\gamma p}{(\gamma-1)\rho}$.

2. Now from \eqref{eq37add2}, we could determine $c_1$ so that
$$r=r^0\frac{\sqrt{M_0}}{[2+(\gamma-1)M_0^2]^{\frac14+\frac{1}{2(\gamma-1)}}}
\frac{[2+(\gamma-1)M^2]^{\frac14+\frac{1}{2(\gamma-1)}}}{\sqrt{M}}.$$
For $r>r_0$, we have $0<M<M_0$.

3. Then, we solve \eqref{eq34}--\eqref{eq36} for  $r\in [r_0,r_1]$ with initial data $\rho=\rho_0, p=p_0, u=u_0$ at $r=r_0$,  to get the unique subsonic solution $\underline{U}(r)$ by the theory of ordinary differential equations. Obviously the solution is smooth (actually real analytic), and $u>0$ in $\mathcal{M}$.
\end{proof}

\subsection{Global uniqueness of spherically symmetric subsonic flows}

We now prove that the spherically symmetric subsonic solution to Problem (S1) constructed above is unique in a larger class of $C^2$ functions.
To this end, we need the following lemma on vorticity.
\begin{lemma} \label{lem32}
Suppose that $p, \rho$ and $u=u^i\p_i$ are $C^1$ and solve the Euler system \eqref{eq104}, with $E$ and $s$ being constants in $\mathcal{M}$. The vorticity of the flow is given by \begin{eqnarray*}
\dd \bar{u}=\frac12\omega_{km}\dd x^k\wedge \dd x^m, \quad \omega_{km}\triangleq\p_k(G_{im}u^i)-\p_m(G_{ik}u^i),
\end{eqnarray*}
where $\bar{u}=u^iG_{ij}\dd x^j$ is the 1-form associated with the vector field $u$.
Then $\omega_{km}=-\omega_{mk}$, and for each $k$,  there holds in $\mathcal{M}$ the algebraic identities:
\begin{eqnarray}\label{eq37}
u^m\omega_{km}=0.
\end{eqnarray}
\end{lemma}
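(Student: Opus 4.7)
The antisymmetry $\omega_{km}=-\omega_{mk}$ is immediate from the definition, so the real content is the algebraic identity $u^m\omega_{km}=0$, which I would recognise as the coordinate expression of $i_u(\dd\bar u)=0$, where $i_u$ is interior multiplication. Indeed, expanding $(i_u\dd\bar u)(\p_k)=(\dd\bar u)(u,\p_k)=\tfrac12\omega_{ij}(\dd x^i\wedge\dd x^j)(u,\p_k)$ and using antisymmetry gives exactly $-u^i\omega_{ki}=u^m\omega_{km}$. So the entire task reduces to showing $i_u(\dd\bar u)\equiv0$ in $\mathcal{M}$.

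The plan is to combine three ingredients already available in Subsection~\ref{sec201}: (i) the 1-form version of momentum conservation \eqref{eq24}, specialized to $b=0$, which gives $D_u\bar u=-\dd p/\rho$; (ii) the fact that for polytropic gases with constant entropy $s$, the 1-form $\dd p/\rho$ is exact, since $p=A(s)\rho^\gamma$ with $A(s)$ constant implies
\[
\frac{\dd p}{\rho}=A(s)\gamma\rho^{\gamma-2}\dd\rho=\dd\!\left(\frac{\gamma p}{(\gamma-1)\rho}\right)=\dd\!\left(\frac{c^2}{\gamma-1}\right);
\]
and (iii) Cartan's magic formula $\mathcal{L}_u\bar u=i_u\dd\bar u+\dd(i_u\bar u)$, together with $i_u\bar u=G(u,u)=|u|^2$, so that
\[
i_u\dd\bar u=\mathcal{L}_u\bar u-\dd(|u|^2).
\]

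Using $\mathcal{L}_u\bar u=D_u\bar u+\dd(|u|^2/2)$ (the identity quoted just before \eqref{eq24}) and then substituting (i) and (ii), I would compute
\[
i_u\dd\bar u=D_u\bar u+\dd\!\left(\frac{|u|^2}{2}\right)-\dd(|u|^2)=-\dd\!\left(\frac{c^2}{\gamma-1}\right)-\dd\!\left(\frac{|u|^2}{2}\right)=-\dd E.
\]
Since $E$ is constant in $\mathcal{M}$ by hypothesis, $\dd E=0$ and hence $i_u\dd\bar u\equiv 0$, which is the desired identity $u^m\omega_{km}=0$ in local coordinates. I do not foresee a real obstacle here; the only things to be careful about are (a) correctly using that $s$ being constant makes $A(s)$ pass under $\dd$, so $\dd p/\rho$ is genuinely exact (this fails for nonconstant entropy, where only $D_u\bar u=-\dd p/\rho$ survives), and (b) making sure the sign and factor $\tfrac12$ in the Cartan identity are handled correctly, since $i_u\bar u=|u|^2$ and not $|u|^2/2$.
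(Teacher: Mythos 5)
Your proposal is correct and is essentially the paper's own argument, merely presented in coordinate-free language. The paper first derives the same intermediate identity $\mathcal{L}_u\bar{u}-\dd(|u|^2)=0$ (using \eqref{eq24}, the constitutive relation with constant entropy, and $\dd E=0$) and then verifies by a direct index computation that $\mathcal{L}_u\bar{u}-\dd(|u|^2)=-u^m\omega_{km}\,\dd x^k$; your invocation of Cartan's magic formula $\mathcal{L}_u\bar u=i_u\dd\bar u+\dd(i_u\bar u)$ together with $i_u\bar u=|u|^2$ is exactly that index identity in disguise, so the two routes are logically the same. One small slip: in your expansion of $(i_u\dd\bar u)(\p_k)$ you write ``gives exactly $-u^i\omega_{ki}=u^m\omega_{km}$,'' but relabelling $i\mapsto m$ turns $-u^i\omega_{ki}$ into $-u^m\omega_{km}$, so the two sides differ by a sign; the correct statement is $(i_u\dd\bar u)(\p_k)=-u^m\omega_{km}$. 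This does not affect the conclusion, since you derive $i_u\dd\bar u=-\dd E=0$, and vanishing is indifferent to sign.
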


\begin{proof}
1. For $s$ a constant in $\mathcal{M}$,   the constitutive relation may be written as $p=A\rho^\gamma$, with $A=k_0\exp(s/c_\nu)$ being a constant.

2. From $E=\frac12|u|^2+\frac{c^2}{\gamma-1}$ and using $c^2=\gamma p/\rho$, we have $\frac12\dd(|u|^2)+A\gamma\rho^{\gamma-2}\dd\rho=0.$ Equation  \eqref{eq24} implies that $\mathcal{L}_u\bar{u}-\frac12\dd(|u|^2)+A\gamma\rho^{\gamma-2}\dd\rho=0$. It follows
\begin{eqnarray}\label{eq38}
\mathcal{L}_u\bar{u}-\dd(|u|^2)=0.
\end{eqnarray}
Direct computation in a local coordinates shows that
\begin{eqnarray*}
\dd\bar{u}=\frac12\omega_{km}\dd x^k\wedge \dd x^m,
\end{eqnarray*}
which is by definition the vorticity (2-form) of the flow. However,
\begin{eqnarray*}
\mathcal{L}_u\bar{u}-\dd(|u|^2)&=&\Big(u^m\p_m(u^iG_{ik})+u^iG_{ij}\p_ku^j\Big)\dd x^k-\Big(u^iG_{ij}\p_ku^j+u^j\p_k(G_{ij}u^i)\Big)\dd x^k\\
&=&u^m\left[\p_m(G_{ik}u^i)-\p_k(G_{im}u^i)\right]\dd x^k\\
&=&-u^m\omega_{km}\dd x^k,
\end{eqnarray*}
so by \eqref{eq38} one infers \eqref{eq37} as desired.
\end{proof}

The following is the second main result of this paper. 
\begin{theorem}\label{thm31}
The only  $C^2(\mathcal{M})\cap C^1(\overline{\mathcal{M}})$ solution to Problem (S1) which satisfies $\rho u^0>0$ in $\overline{\mathcal{M}}$ is  the special subsonic solution $\underline{U}$.
\end{theorem}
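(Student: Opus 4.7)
The plan is to reduce Problem (S1) to an irrotational, isentropic, constant--Bernoulli potential flow via transport arguments, and then to prove uniqueness of the resulting quasilinear scalar BVP by an oblique--derivative maximum principle.

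First I exploit the transport structure. Because $u^0>0$ throughout $\overline{\mathcal{M}}$, the integral curves of $u$ satisfy $\dot r>0$, so every trajectory enters through $M_0$ and exits through $M_1$. The transport equations \eqref{eq212} and \eqref{eq212ber} (with $b=r=0$) combined with the boundary data $s=s_1$, $E=E_1$ on $M_1$ therefore yield $s\equiv s_1$ and $E\equiv E_1$ throughout $\overline{\mathcal{M}}$. With $s$ constant, $p=A(s_1)\rho^\gamma$ renders $\dd(1/\rho)\wedge\dd p=0$, so \eqref{eq25} collapses to $\mathcal{L}_u(\dd\bar u)=0$: the vorticity $2$-form is Lie--transported along $u$.

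Next I show the vorticity vanishes identically. On $M_1$, $u^\alpha=0$ while $u^0\neq 0$; taking $k=\alpha\in\{1,2\}$ in the algebraic identity $u^m\omega_{km}=0$ from Lemma \ref{lem32} gives $\omega_{\alpha 0}|_{M_1}=0$. The remaining component $\omega_{12}|_{M_1}$ also vanishes by direct computation in the diagonal spherical metric, since $G_{i\alpha}u^i|_{M_1}=G_{\alpha\alpha}u^\alpha|_{M_1}\equiv 0$ forces its tangential derivatives along $M_1$ to be zero. Hence $\dd\bar u|_{M_1}=0$, and Lie--transport backward along trajectories yields $\dd\bar u\equiv 0$ in $\mathcal{M}$. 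Since $\mathcal{M}=(r_0,r_1)\times S^2$ is simply connected, the closed $1$-form $\bar u$ is exact: $\bar u=\dd\phi$ for some $\phi\in C^2(\mathcal{M})\cap C^1(\overline{\mathcal{M}})$. With constant $E$ and $s$, Bernoulli expresses $\rho$ as a function of $|\grad\phi|^2$; the irrotational identity $D_u u=\tfrac12\grad|u|^2$ combined with the Bernoulli identity $\tfrac12\grad|u|^2+\rho^{-1}\grad p=0$ makes the momentum equation automatic, leaving as the only remaining Euler equation the quasilinear potential--flow equation $\di(\rho(|\grad\phi|^2)\grad\phi)=0$. The boundary data translate to $\phi|_{M_1}\equiv 0$ (after choice of an additive constant, using $u'=0$) and $|\grad\phi|^2=u_0^2$ on $M_0$ (since $p=p_0$ and $s\equiv s_1$ force $\rho=\rho_0$, and Bernoulli then fixes $|u|=u_0$).

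Finally, for uniqueness I set $\psi=\phi-\bar\phi$ with $\bar\phi(r)$ the radial potential of $\underline U$, and integrate the derivative of the potential--flow operator along $\tau\phi+(1-\tau)\bar\phi$ to obtain a linear divergence--form equation $\di(A\grad\psi)=0$ with $A$ symmetric and positive definite whenever both flows remain subsonic. The boundary conditions reduce to $\psi|_{M_1}=0$ and the oblique condition $G^{ij}(\p_i\phi+\p_i\bar\phi)\p_j\psi=0$ on $M_0$, whose oblique coefficient vector has strictly positive radial ($\p_0$-)component by $u^0>0$. If $\psi$ attained a positive maximum over $\overline{\mathcal{M}}$ at some $x_0\in M_0$, then the tangential gradient of $\psi$ would vanish at $x_0$ (since $M_0$ is a closed surface and $x_0$ is a local maximum of $\psi|_{M_0}$), so the oblique condition would force $\p_0\psi(x_0)=0$; this contradicts Hopf's lemma, which requires $\p_0\psi(x_0)<0$ as the outward normal on $M_0$ is $-\p_0$. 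The strong maximum principle and $\psi|_{M_1}=0$ then give $\psi\le 0$; applying the same argument to $-\psi$ yields $\psi\ge 0$, hence $\psi\equiv 0$ and the solution equals $\underline U$. The hard part will be securing uniform ellipticity of the linearized operator on all of $\overline{\mathcal{M}}$: the hypothesis $\rho u^0>0$ alone does not immediately exclude sonic transitions in a general $C^2$ solution, so an a priori bound $|u|<c$, derived from the subsonic inflow condition \eqref{eq37add} via a suitable maximum principle for $\rho$ or $|\grad\phi|$, must be established before the above oblique--derivative uniqueness argument can be applied.
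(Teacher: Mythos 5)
Your route --- constancy of $E$ and $s$ by transport, Lie transport of vorticity to show $\dd\bar u\equiv0$, exactness on the simply connected shell, reduction to the potential flow equation with the Dirichlet/Neumann-type data on $M_1/M_0$, and an oblique-derivative maximum principle --- is exactly the one the paper follows through Lemma \ref{lem32} and Lemma \ref{lem33}, down to the Hopf-lemma argument on $M_0$. However, the final step contains a genuine gap, which you in fact flag yourself: the ``divergence form'' linearization
$\di(A\,\grad\psi)=0$ obtained by integrating along the segment $\tau\phi+(1-\tau)\bar\phi$ is only elliptic where all intermediate states are subsonic, and as you note, $\rho u^0>0$ does not give you this. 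Proposing to recover $|u|<c$ a priori via a maximum principle for $\rho$ or $|\grad\phi|$ is not a step you carry out, and it is not obviously available.

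The paper avoids the issue entirely by a different linearization. It writes the potential flow equation in \emph{non-divergence} form $a^{ij}(D\varphi)\p_{ij}\varphi=0$ with $a^{ij}(p)=c(|p|^2)^2\delta^{ij}-p_ip_j$ (note that $c^2=(\gamma-1)\bigl(E-\tfrac12|p|^2\bigr)$, so $a^{ij}$ is a polynomial in $p$ and smooth everywhere), and then subtracts the equations for $\varphi_s$ and $\varphi$ in the form
$a^{ij}(D\varphi_s)\p_{ij}w + \p_{ij}\varphi\cdot\bigl(a^{ij}(D\varphi_s)-a^{ij}(D\varphi)\bigr)=0$,
$w=\varphi_s-\varphi$. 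The key point is that the \emph{principal} coefficient is $a^{ij}(D\varphi_s)$, evaluated only at the known, uniformly subsonic background $\varphi_s$, hence uniformly elliptic with no hypothesis on $\varphi$; the second term becomes $\tilde b^i\p_iw$ with bounded $\tilde b^i$ by the mean value theorem and $\varphi\in C^2(\overline{\mathcal{M}})$. The classical maximum principle and Hopf lemma (\cite[Theorem 3.5, Lemma 3.4]{GT}) then apply directly, and no a priori bound $|u|<c$ on the unknown solution is required. This is the piece your proposal is missing; replacing your convex-combination divergence linearization by the paper's background-frozen non-divergence linearization closes the gap.
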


\begin{proof}
1. Since $E$ and $s$ are constants in $\mathcal{M}$, we could use Lemma \ref{lem32}. For our special case that $\mathcal{M}=(r_0, r_1)\times S^2$, we have $k=0,1,2$. We also assumed that $u^0>0$. So on $M_1$, from the boundary condition $u'=0,$  ({\it i.e.} $u^1=0, u^2=0$,)  there holds
\begin{eqnarray*}
\omega_{12}=\sum_{i=0}^2[\p_1(G_{i2}u^i)-\p_2(G_{i1}u^i)]=\p_1(G_{02}u^0)-\p_2(G_{01}u^0)=0.
\end{eqnarray*}
The last equality follows from the fact that  $G_{ij}=0$ for $i\ne j$.
Hence we conclude that for all $m,k$,  $\omega_{mk}=0$ on $M_1$; namely,  $\dd\bar{u}=0$ on $M_1$.

2. Now acting $\dd$ on equation \eqref{eq38}, we have $\mathcal{L}_u \dd\bar{u}=0$ in $\mathcal{M}$. By Lemma A.5 in \cite[p.2539]{CY2013}, one concludes that $\dd\bar{u}=0$ in $\mathcal{M}$. Since the first Betti number of $\mathcal{M}$ is zero (that is, $\mathcal{M}$ is simply connected), by Poincar\'{e} lemma, $\bar{u}$ is exact: there is a function $\varphi\in C^3(\mathcal{M})$ so that $\dd\varphi=\bar{u}$, or $u=\grad \varphi$. So conservation of mass becomes
\begin{eqnarray}\label{eq39}
\di (\rho\cdot \grad \varphi)=0,
\end{eqnarray}
with $\rho$ a function of $|\grad \varphi|^2$ determined by the Bernoulli law
$\frac12 |\grad \varphi|^2+\frac{\gamma A\rho^{\gamma-1}}{\gamma-1}=E.$
Equation \eqref{eq39} is the so-called potential flow equation. It is a second order elliptic equation for subsonic flows.

3. The boundary conditions for the potential function $\varphi$ are
\begin{eqnarray}
&\varphi=0 &\text{on}\quad M_1,\label{eq310}\\
&|\grad \varphi|^2=K_0\triangleq2\left(E-\frac{\gamma A}{\gamma-1}\left(\frac{p_0}{A}\right)^{\frac{\gamma-1}{\gamma}}\right) &\text{on}\quad M_0.\label{eq311}
\end{eqnarray}
The theorem is proved by applying the following Lemma \ref{lem33}.
\end{proof}



\begin{lemma}\label{lem33}
Let $\varphi\in C^2(\overline{\mathcal{M}})$ be a solution to problem \eqref{eq39}--\eqref{eq311} so that $\p_r \varphi\ge0$ on $M_0$. Then it must be a spherically symmetric subsonic flow.
\end{lemma}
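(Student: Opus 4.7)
The plan is to show $\psi\triangleq\varphi-\underline{\varphi}\equiv 0$ on $\overline{\mathcal{M}}$ by realizing it as the solution of a linear second-order elliptic problem with Dirichlet data on $M_1$ and an oblique condition on $M_0$ whose direction points strictly into $\mathcal{M}$, then running the strong maximum principle together with Hopf's lemma.

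Writing $F(U)\triangleq\rho(|U|^2)\,U$ so that \eqref{eq39} is $\di F(\grad\varphi)=0$, the Jacobian $DF(U)=\rho\,I+2\rho'(|U|^2)\,U\otimes U$ is symmetric and uniformly positive definite on any strictly subsonic set: its eigenvalue along $U$ is $\rho(1-|U|^2/c^2)$, while the two transverse eigenvalues equal $\rho$. The fundamental theorem of calculus gives
\[ F(\grad\varphi)-F(\grad\underline{\varphi}) \;=\; \tilde{A}(x)\,\grad\psi, \qquad \tilde{A}(x) \triangleq \int_0^1 DF\bigl(\grad\underline{\varphi}+t\,\grad\psi\bigr)\,dt, \]
and subtracting the equations satisfied by $\varphi$ and $\underline{\varphi}$ yields the divergence-form linear elliptic equation $\di(\tilde{A}\,\grad\psi)=0$ in $\mathcal{M}$ with no zeroth-order term. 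The boundary data are $\psi|_{M_1}=0$ from \eqref{eq310}, while on $M_0$ the identity $|\grad\varphi|^2=K_0=|\grad\underline{\varphi}|^2$ factors as $\beta\cdot\grad\psi=0$ with $\beta\triangleq\grad\varphi+\grad\underline{\varphi}$. The $\p_r$-component of $\beta$ equals $\p_r\varphi+\sqrt{K_0}\ge\sqrt{K_0}>0$ by the hypothesis $\p_r\varphi\ge 0$, so $\beta$ is transverse to $M_0$ and points strictly into $\mathcal{M}$ (whose outward normal on $M_0$ is $-\p_r$).

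To finish, suppose $\max_{\overline{\mathcal{M}}}\psi>0$. The strong maximum principle for divergence-form operators with no zeroth-order term excludes an interior maximum (which would force $\psi$ to be constant, contradicting $\psi|_{M_1}=0$), and $M_1$ is excluded a fortiori; hence the maximum is attained only at some $x_0\in M_0$ with $\psi<\psi(x_0)$ throughout $\mathcal{M}$. Hopf's lemma, applied in the inward-pointing transverse direction $\beta$, then gives $\beta\cdot\grad\psi(x_0)<0$, contradicting the oblique condition. The symmetric argument rules out a negative minimum, so $\psi\equiv 0$ and $\varphi=\underline{\varphi}$. The two delicate points are the form of Hopf's lemma for an oblique derivative whose direction points inward (where the customary sign is reversed, yielding the strict inequality needed for the contradiction), and the uniform ellipticity of $\tilde{A}$, which holds at $M_0$ by hypothesis \eqref{eq37add} and is propagated throughout $\mathcal{M}$ either by restricting attention to admissible subsonic solutions or by a standard continuation argument on the open subsonic set.
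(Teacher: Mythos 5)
Your proof follows the same overall strategy as the paper's: form the difference with the special radial solution, realize it as the solution of a linear uniformly elliptic problem with Dirichlet data on $M_1$ and an interior-pointing oblique condition on $M_0$, and conclude by the strong maximum principle plus Hopf's lemma. The boundary analysis on $M_0$ (the factorization $|\grad\varphi|^2 - |\grad\underline{\varphi}|^2 = \beta\cdot\grad\psi$ and the transversality $\p_r\varphi + \sqrt{K_0} > 0$) is identical in substance to the paper's.

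The genuine gap is in your linearization, and it is not a minor technicality. You write $F(\grad\varphi)-F(\grad\underline{\varphi}) = \tilde{A}(x)\grad\psi$ with $\tilde{A}(x)=\int_0^1 DF(\grad\underline{\varphi}+t\grad\psi)\,dt$, which requires the entire segment $\{(1-t)\grad\underline{\varphi}+t\grad\varphi : t\in[0,1]\}$ to stay strictly subsonic at every point of $\overline{\mathcal{M}}$; in particular you need $\varphi$ itself to be uniformly subsonic, which is not a hypothesis of the lemma. (It is the conclusion: the lemma is invoked in Theorem \ref{thm31} precisely to show that an a priori arbitrary $C^2$ solution must be the subsonic one.) The hypothesis \eqref{eq37add} controls only $\underline{\varphi}$, not $\varphi$; the condition $|\grad\varphi|^2=K_0$ controls $\varphi$ only on $M_0$ and says nothing about the interior. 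The two escape routes you suggest do not close the hole: restricting to ``admissible subsonic solutions'' weakens the statement precisely where it is used, and a ``continuation argument on the open subsonic set'' runs into the problem that on the free boundary of that set the equation degenerates and no boundary data for $\psi$ are available, so there is no obvious way to propagate the conclusion outward from $M_0$.

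The paper sidesteps this entirely by linearizing in non-divergence form and \emph{not} symmetrizing between the two solutions. Writing $\sum a^{ij}(D\varphi_s)\p_{ij}w + \sum \p_{ij}\varphi\bigl(a^{ij}(D\varphi_s)-a^{ij}(D\varphi)\bigr) = 0$, the leading coefficients $a^{ij}(D\varphi_s)$ depend only on the known special solution and are therefore uniformly elliptic regardless of what $\varphi$ does; the difference $a^{ij}(D\varphi_s)-a^{ij}(D\varphi)$ is absorbed, via the mean value theorem, into a bounded first-order term $\tilde b^i\p_i w$ (bounded because $\varphi\in C^2(\overline{\mathcal M})$ makes $\p_{ij}\varphi$ and $D\varphi$ bounded, and $a^{ij}$ is a smooth function of $D\varphi$). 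No subsonicity of $\varphi$ is needed at any stage. To repair your argument, you should replace the FTC/divergence-form linearization with this one-sided non-divergence-form linearization, after which the rest of your maximum-principle and Hopf-lemma reasoning carries through unchanged.
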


\begin{proof}
1. We use the standard global Descartes coordinates of $\mathcal{M}$  given by $(z^1, z^2, z^3)$ so that $r=x^0=\sqrt{(z^1)^2+(z^2)^2+(z^3)^2}.$ Then one checks that equation \eqref{eq39} could be written in the non-divergence form as
\begin{eqnarray*}
\sum_{i,j=1}^3a^{ij}(D\varphi)\p_{ij}\varphi \triangleq c(|D\varphi|^2)^2\Delta\varphi-\sum_{i,j=1}^3\p_i\varphi\p_j\varphi\p_{ij}\varphi=0.
\end{eqnarray*}
Here and in the rest of the proof,  $\p_i=\frac{\p}{\p z_i},\
\p_{ij}=\frac{\p^2}{\p z_i \p z_j}$, and $c$ is the sonic speed; $D\varphi$ and   $\Delta\varphi$  are respectively the standard gradient and Laplace operator in $\mathbb{R}^3$ acting on $\varphi$.

2. Let $\varphi_s=\varphi_s(r)$ be a special symmetric subsonic solution to problem \eqref{eq39}--\eqref{eq311}. The existence of $\varphi_s$ is guaranteed by Lemma \ref{lem31}.  We see that $w=\varphi_s-\varphi$ solves the following equation:
\begin{eqnarray*}\label{weq}
&&\sum_{i,j=1}^3\tilde{a}^{ij}(x)\p_{ij}w
+\sum_{i=1}^3\tilde{b}^i(x)\p_iw\nonumber\\
&\triangleq&\sum_{i,j=1}^3a^{ij}(D\varphi_s)\p_{ij}w
+\sum_{i,j=1}^3\p_{ij}\varphi\cdot(a^{ij}(D\varphi_s)-a^{ij}(D\varphi))
=0.
\end{eqnarray*}
Since $\varphi_s$ is a uniformly subsonic flow, this is a linear uniformly elliptic equation of $w$ in $\mathcal{M}$.

The boundary conditions of $w$ are
\begin{eqnarray*}
w=0&\text{on}\ \ M_1,\quad
D w\cdot l=0 &\text{on}\ \ M_0,
\end{eqnarray*}
with $l=D \varphi_s+D \varphi.$  By the assumption that $\p_r\varphi\ge0$ and the fact that
$\p_r\varphi_s>0$, we see $l\cdot (r^0, 0,0)/|r^0|=\p_r\varphi_s+\p_r\varphi>0$, hence on $M_0$ we have an oblique derivative condition of $w$. By maximum principles \cite[Theorem 3.5 in p.35]{GT} and Hopf boundary point lemma \cite[Lemma 3.4 in p.34]{GT}, we conclude that $w\equiv0$ as desired.
\end{proof}

\begin{remark}
The uniqueness of symmetric subsonic solutions to potential flows in infinite conical nozzles had been proved in \cite{LY2011} by applying Harnack inequalities, see also \cite{Liu2010} for the uniqueness result of 
the case that $M=\mathbb{T}^2$. 
The existence of isentropic irrotational subsonic flows in general three-dimensional largely-open nozzles was proved in \cite{liuyuan2014}, while the same existence problem for the three-dimensional Euler system still remains open. See \cite{cdx} and references therein for some results on subsonic flows in strip-like domains for the two-dimensional compressible Euler system.
\end{remark}

\section{Stability of spherically symmetric subsonic flows}\label{sec4}

In this section we continue to investigate Problem (S).  We are wondering if a spherically symmetric subsonic flow $U_b$ (called as a {\it background solution} in the sequel) constructed by Lemma \ref{lem31} is stable under multidimensonal perturbations of boundary conditions.
The following is the third main theorem we will prove in this paper.
\begin{theorem} \label{thm41}
Suppose that $U_b$ is a background solution so that for $x^0\in (r_0,r_1)$ and $t=M_b^2(x^0)$, there holds
\begin{eqnarray}\label{eq41}
\gamma(1+2\gamma)t^4+
(-4\gamma^2+2\gamma-3)t^3+(14-7\gamma)t^2-19t+6
<0;
\end{eqnarray}
here $M_b=u_b/c_b$ is the Mach number of the background solution.
Then for $\alpha\in (0,1)$, there exist positive constants $\varepsilon_0$ and $C$ depending only on the background solution $U_b$ and $r_0, r_1, \alpha, \gamma$ so that if
\begin{multline}\label{eq42add}
\norm{u_1'(x')}_{C^{3,\alpha}(M_1)}+\norm{E_1(x')-E_b}_{C^{3,\alpha}(M_1)}+
\norm{s_1(x')-s_b}_{C^{3,\alpha}(M_1)}\\+
\norm{p_0(x')-p_b}_{C^{3,\alpha}(M_0)}
\le \varepsilon\le \varepsilon_0,
\end{multline}
there is uniquely one solution $U$ to Problem (S),  with $p\in C^{3,\alpha}(\overline{\mathcal{M}})$, $s, E, u\in C^{2,\alpha}(\overline{\mathcal{M}})$, and
\begin{eqnarray}\label{eq43}
\norm{p-p_b}_{C^{3,\alpha}(\overline{\mathcal{M}})}+
\norm{s-s_b}_{C^{2,\alpha}(\overline{\mathcal{M}})}+
\norm{E-E_b}_{C^{2,\alpha}(\overline{\mathcal{M}})}+
\norm{u'}_{C^{2,\alpha}(\overline{\mathcal{M}})}
\le C\varepsilon.
\end{eqnarray}
\end{theorem}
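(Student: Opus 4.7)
The plan is to apply Theorem~\ref{thm21} to recast Problem (S) as the coupled system \eqref{eq212}--\eqref{eq215} together with the Dirichlet condition $p=p_0$ on $M_0$, and to recover the normal velocity $u^0$ algebraically at each step from the Bernoulli law $E=\tfrac12|u|^2+\tfrac{\gamma p}{(\gamma-1)\rho}$ and $p=A(s)\rho^\gamma$. Since \eqref{eq210} is second order in $p$ while the rest of the system only involves first derivatives of the remaining unknowns, I would iterate in the mixed-regularity space $C^{3,\alpha}(\overline{\mathcal M})\times C^{2,\alpha}(\overline{\mathcal M})^3$ for $(p,s,E,u')$ and close the nonlinear problem by a generalized Banach fixed point, as forecast in the remark after Theorem~\ref{thm21}.

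Given an approximation $(\tilde p,\tilde s,\tilde E,\tilde u')$ within a $C\varepsilon$-ball about $U_b$, I would first reconstruct $\tilde u^0$ from Bernoulli (which keeps $\tilde u^0$ close to $u_b^0>0$ for $\varepsilon$ small), then solve the transport equations \eqref{eq212}, \eqref{eq212ber}, \eqref{eq213} with data $E_1,s_1,u'_1$ on $M_1$ by integrating along the streamlines of $\tilde u$. Since those streamlines cross every level set $\{x^0=\text{const}\}$ transversally, the transport estimates of Section~\ref{sec6} yield $s,E,u'\in C^{2,\alpha}(\overline{\mathcal M})$ with norms linear in the prescribed data.

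The technical heart is the linearization of \eqref{eq210} for the pressure increment $q=p-p_b$. A direct expansion using $c_b^2=\gamma p_b/\rho_b$ shows that the principal part at $U_b$ reads
\begin{equation*}
\frac{u_b^2-c_b^2}{\gamma p_b}\,\p_0^2 q-\frac{1}{\rho_b (x^0)^2}\,\Delta_{S^2}q+(\text{lower order}),
\end{equation*}
which is uniformly elliptic on $\overline{\mathcal M}$ by subsonicity, with Dirichlet data $q=p_0-p_b$ on $M_0$. On $M_1$ I would reduce \eqref{eq215} to an oblique first-order condition on $q$ by writing $D_u p/(\gamma p)=u^0\p_0 q/(\gamma p)+\cdots$, expanding $\di u|_{M_1}=\p_0 u^0+2u^0/x^0+\di_{S^2}u'_1$, differentiating Bernoulli to express $\p_0 u^0$ in terms of $\p_0 q,\p_0 E,\p_0 s,\p_0 u'$, and finally using \eqref{eq212}, \eqref{eq212ber}, \eqref{eq213} restricted to $M_1$ to trade these radial derivatives for tangential derivatives of the data and of $\grad_{S^2}q$. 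The outcome is a boundary condition
\begin{equation*}
a(x^0)\,\p_0 q+b^\alpha(x')\,\p_\alpha q+c(x')\,q=h(x')\quad\text{on }M_1,
\end{equation*}
in which a straightforward algebraic computation reveals that $a$ evaluated on $U_b$ is proportional, up to a strictly positive factor, to the quartic polynomial in $t=M_b^2$ appearing in \eqref{eq41}. Thus \eqref{eq41} is exactly the statement that the boundary operator is uniformly oblique with the correct sign, and Schauder theory delivers a unique $q\in C^{3,\alpha}(\overline{\mathcal M})$ with norm bounded by $\|p_0-p_b\|_{C^{3,\alpha}(M_0)}$ together with the $C^{2,\alpha}$-norms of the source and boundary data.

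Assembling these two solution operators defines an iteration map $T$; the higher-order remainders from \eqref{eq210} and \eqref{eq215} are quadratic in the perturbation and are estimated as in Section~\ref{sec6}, so $T$ preserves a ball of radius $C\varepsilon$ and is a contraction in a slightly weaker norm. Its unique fixed point solves the decomposed system, and Theorem~\ref{thm21} (whose compatibility hypothesis $u^0>0$ is automatic) upgrades it to a solution of \eqref{eq104} satisfying \eqref{eq43}, with uniqueness following from the contraction. The main obstacle is the identification of \eqref{eq41} as the correct ellipticity condition: interior ellipticity is automatic in the subsonic regime, so the restriction must come entirely from the oblique boundary operator on $M_1$, whose coefficient in front of $\p_0 q$ results from a delicate interplay between Bernoulli, mass conservation and tangential momentum; pinning this coefficient down as precisely the quartic in \eqref{eq41} is the computational crux of the proof, while everything else is standard Schauder theory and transport-equation bookkeeping.
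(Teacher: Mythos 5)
Your overall strategy --- invoke Theorem~\ref{thm21}, iterate in the mixed space $C^{3,\alpha}\times C^{2,\alpha}$, split the pressure equation from the transport equations, and close with a Banach fixed point --- matches the paper, and your proposed principal part of the linearized pressure operator is (up to an overall factor) the paper's $(t-1)\p_0^2-\frac{1}{(x^0)^2}\Delta'$ in \eqref{eq441}. However, you have misattributed the role of the quartic in \eqref{eq41}. You assert that interior ellipticity is automatic, so the restriction ``must come entirely from the oblique boundary operator on $M_1$,'' and that the coefficient $a(x^0)$ of $\p_0 q$ in that boundary condition equals the quartic up to a positive factor. Neither claim is correct. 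In the paper, the boundary condition on $M_1$ linearizes to the Robin condition \eqref{eq416}, $\p_0\hat p+\gamma_1\hat p=G$, whose coefficient in front of $\p_0\hat p$ is simply $1$, and whose zeroth-order coefficient $\gamma_1=\frac{2}{x^0}\frac{\gamma t^2-t+2}{(t-1)^2}$ (see \eqref{eq417}) is a \emph{quadratic} in $t$ that is automatically positive for all $t\in(0,1)$, $\gamma>1$ --- so no hypothesis on the background Mach number is needed for the boundary operator to be admissible.

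The hypothesis \eqref{eq41} instead enters through the \emph{interior} zeroth-order coefficient $e(t)$ of the linearized elliptic operator \eqref{eq441}. After eliminating $u^0,\p_0u^0,(u^0)^2$ from \eqref{eq415} by means of Bernoulli, mass conservation and radial momentum, one obtains \eqref{eq432}, whose linearization yields a potential term $\frac{1}{(x^0)^2}e(t)\hat p$ with $e(t)=\frac{2}{(t-1)^3}\bigl[\gamma(1+2\gamma)t^4+(-4\gamma^2+2\gamma-3)t^3+(14-7\gamma)t^2-19t+6\bigr]$. Since $(t-1)^3<0$, \eqref{eq41} is exactly the statement that $e(t)\ge 0$, which is the sign condition on the zeroth-order term required for the maximum principle and hence for unique solvability of the mixed Dirichlet--Robin problem \eqref{eq460}. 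Your inference is flawed because ellipticity of the principal part is not by itself sufficient for unique solvability: a wrong-signed potential can create a nontrivial kernel even with a perfectly oblique boundary operator, and it is precisely this spectral obstruction that \eqref{eq41} rules out. Thus the ``computational crux'' you locate on $M_1$ actually resides in the interior operator, and identifying \eqref{eq41} as an obliqueness condition would lead the proof astray.
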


\begin{remark}\label{rm41}
The requirement \eqref{eq41} is  a quite rough condition for the stability of $U_b$ derived from the decomposition stated in Theorem \ref{thm21} ({\it cf.} Remark \ref{rm42}). Note that $f(1)=-2(\gamma+1)^2<0$,  so a sufficient condition for \eqref{eq41} is that the Mach number at the entry $r_0$ is quite close to $1$ and $r_1$ is close to $r_0$.
\end{remark}

To prove Theorem \ref{thm41},
firstly we formulate a nonlinear problem (S2) by specifying the functions $L^k, L_k$ appeared in Theorem \ref{thm21}, and by the same theorem,  we infer that Problem (S2) is equivalent to Problem (S). Then we construct a nonlinear iteration mapping and solve Problem (S2) by using a  Banach fixed point theorem. Lots of  expressions derived here will also be used in Section \ref{sec5} for the studies of transonic shocks.

\subsection{Problem (S2)}
To formulate Problem (S2), we  need to compute the exact expressions
of \eqref{eq215} and \eqref{eq210} and then specify the auxiliary
functions $L^k, L_k$. 

\subsubsection{Specification of boundary conditions}

By the definitions of divergence operator and covariant derivative in a local spherical coordinates system of $\mathcal{M}$, we have
\begin{eqnarray}\label{eq42}
\di\,u&=&
\p_0u^0+\frac{2u^0}{x^0}+\frac{1}{\sqrt{G}}\p_\alpha\left(\sqrt{G}u^\alpha\right)
=\p_0u^0+\frac{2u^0}{x^0}+\p_\alpha u^\alpha+u^1\Gamma^2_{21},\\
D_uu+\frac{1}{\rho}\grad\,p&=&(u^j\p_ju^k+u^ju^m\Gamma_{jm}^k+\frac{1}{\rho}\p_ipG^{ik})\p_k.\label{eq45add33}
\end{eqnarray}
It follows,
by setting  $L_3(\cdot)=-\frac{1}{(u^0)^2}G(\cdot,
u^0\p_0)$, 
the identity
\begin{multline}\label{eq48add}
\rho u^0(\frac{D_up}{\gamma p}+\di\, u+L_3(D_uu+\frac{\grad\,p}{\rho}))
=\left(\frac{(u^0)^2}{c^2}-1\right)\p_0p+{u^0u^\alpha}\left(\frac{1}{c^2}+\frac{1}{(u^0)^2}
\right)\p_\alpha p-\frac{4\gamma}{\gamma-1}\frac{1}{x^0}p\\
+\frac{4\rho E}{x^0}-\frac{2\rho}{x^0}G_{\alpha\beta}u^\alpha u^\beta+\frac{1}{\gamma-1}\frac{u^\alpha}{u^0}\p_\alpha A(s)\rho^{\gamma}
+\frac{\rho u^\delta}{2u^0}\p_\delta(G_{\alpha\beta}u^\alpha u^\beta)-\frac{\rho u^\alpha}{u^0}\p_\alpha E\\
+\frac{\rho u^0}{\sqrt{G}}\p_\alpha(\sqrt{G}u^\alpha)-\rho u^\alpha u^\beta\Gamma_{\alpha\beta}^0.
\end{multline}
Comparing this to \eqref{eq215}, we see it is actually a nonlinear
Robin condition for $p$ on $M_1$:
\begin{eqnarray}\label{eq48}
\p_0p +\frac{1}{x^0}\frac{2\gamma p
(u^0)^2}{(u^0)^2-c^2}=G_1(U)+G_2(U),
\end{eqnarray}
with
\begin{eqnarray}\label{eq49}
G_1&\triangleq&\frac{1}{\frac{(u^0)^2}{c^2}-1}\left[-\frac{\rho u^0}{\sqrt{G}}\p_\alpha(\sqrt{G}u^\alpha)\right],\\
G_2&\triangleq&\frac{1}{\frac{(u^0)^2}{c^2}-1}\left[-\frac{1}{\gamma-1}\frac{u^\alpha}{u^0}\p_\alpha A(s)\rho^{\gamma}
-\frac{\rho u^\delta}{2u^0}\p_\delta(G_{\alpha\beta}u^\alpha u^\beta)+\frac{\rho u^\alpha}{u^0}\p_\alpha E\right.\nonumber\\
&&\left.+\rho u^\alpha u^\beta\Gamma_{\alpha\beta}^0-{u^0u^\alpha}\left(\frac{1}{c^2}+\frac{1}{(u^0)^2}
\right)\p_\alpha p\right].\label{eq410}
\end{eqnarray}

\subsubsection{Specify equation of pressure}
We now compute the explicit expression of the equation
\eqref{eq210}. It is straightforward to check that
\begin{multline}\label{eq415}
D_u\left(\frac{D_up}{\gamma p}\right)-\di\,\left(\frac{\grad\, p}{\rho}\right)-C^1_1C^1_2(D u\otimes D u)
=\frac{1}{\gamma p}\left[\Big((u^0)^2-c^2\Big)\p_0^2p-\frac{c^2}{(x^0)^2}\Delta' p\right]\\
+\frac{1}{\gamma p}\left[\frac{2}{x^0}\left((u^0)^2-c^2\right)\p_0p+u^0\p_0u^0\p_0p-\frac{(u^0)^2}{p}(\p_0p)^2+\frac{\gamma p}{\rho^2} \p_0\rho\p_0p\right]\\
-(\p_0u^0)^2+2\left(\frac{u^0}{x^0}\right)^2+2\frac{u^0}{x^0}\p_0u^0+H_1(U),
\end{multline}
where $\Delta'$ is the Laplacian on $S^2$, and
\begin{eqnarray}\label{eq425}
H_1(U)&=&\frac{2u^0}{x^0}\frac{u^\alpha\p_\alpha p}{\gamma p}+\frac{1}{\rho^2}G^{\alpha \beta}\p_\alpha p\p_\beta\rho+\sum_{(k,j)\ne(0,0)}\left(\frac{u^ku^j}{\gamma p}\p_{jk}p
+\frac{u^k}{\gamma p}\p_ku^j\p_j p-\frac{1}{\gamma p^2}u^ku^j\p_kp\p_jp\right)\nonumber\\
&&-\sum_{(l,j)\ne(0,0)}\left(\p_ju^l\p_lu^j+2
\Gamma^l_{j\beta}u^\beta\p_lu^j+\Gamma_{j\alpha}^lu^\alpha\Gamma_{l\beta}^ju^\beta\right)
-2\frac{u^0}{x^0}\left(\frac{D_up}{\gamma p}+\di\,u\right).
\end{eqnarray}
Replacing terms like $\p_0u^0, (\p_0u^0)^2, (u^0)^2$
in \eqref{eq415} by using suitable Euler equations, 
after some straightforward computations, we get the identity
\begin{eqnarray}\label{eq432}
&&D_u\left(\frac{D_up}{\gamma p}\right)-\di\,\left(\frac{\grad\, p}{\rho}\right)-C^1_1C^1_2(D u\otimes D u)\nonumber\\
&=&\frac{1}{\gamma
p}\left[\Big(2E-\frac{\gamma+1}{\gamma-1}c^2\Big)\p_0^2p-\frac{c^2}{(x^0)^2}\Delta'p\right]
+\frac{4}{\gamma p x^0}\left(E-\frac{\gamma c^2}{\gamma-1}\right)\p_0p\nonumber\\
&&\qquad-\frac{2}{\gamma p^2}\left(E-\frac{c^2}{\gamma-1}+\frac{c^4}{4\gamma}\frac{1}{E-\frac{c^2}{\gamma-1}}\right)(\p_0p)^2
+\frac{4}{(x^0)^2}\left(E-\frac{c^2}{\gamma-1}\right)\nonumber\\
&&\qquad\qquad\qquad+H_1+H_2+H_3,
\end{eqnarray}
with
\begin{eqnarray}\label{eq431}
H_2&=&\frac{1}{\gamma p}\left[\p_0pG(D_uu+(\grad\,p)/\rho, \p_0)-\rho^{\gamma-1}\p_0p\frac{1}{u^0}D_uA(s)\right.\nonumber\\
&&\left.-\p_0p(u^\alpha\p_\alpha u^0+u^\alpha u^\beta\Gamma^0_{\alpha\beta})+\rho^{\gamma-1}\p_0p\frac{u^\alpha}{u^0}\p_\alpha A(s)\right]\nonumber
\end{eqnarray}
\begin{eqnarray}
&&-\left(\frac{1}{u^0}\right)^2G(D_uu+(\grad\,p)/\rho,\p_0)^2+\frac{2\p_0p}{\rho(u^0)^2}G(D_uu+(\grad\,p)/\rho,\p_0)\nonumber\\
&&+\frac{2}{(u^0)^2}(u^\alpha\p_\alpha u^0+u^\alpha u^\beta\Gamma^{0}_{\alpha\beta})G(D_uu+(\grad\,p)/\rho,\p_0)\nonumber\\
&&-\left(\frac{1}{u^0}\right)^2(u^\alpha\p_\alpha u^0+u^\alpha u^\beta\Gamma^{0}_{\alpha\beta})^2-\frac{2\p_0p}{\rho (u^0)^2}(u^\alpha\p_\alpha u^0+u^\alpha u^\beta\Gamma^{0}_{\alpha\beta})\nonumber\\
&&+\frac{2}{x^0}G(D_uu+(\grad\,p)/\rho,\p_0)-\frac{2}{x_0}(u^\alpha\p_\alpha
u^0+u^\alpha u^\beta\Gamma^{0}_{\alpha\beta}),\\
\label{eq433}
H_3&=&-G_{\alpha\beta}u^\alpha u^\beta\left[\frac{1}{\gamma p}\p_0^2p+\frac{2}{\gamma px^0}\p_0p+\frac{2}{(x^0)^2}\right.\nonumber\\
&&\left.+\frac{(\p_0p)^2}{\gamma p^2}\left(-1+\frac{c^4}{\gamma}
\frac{1}{2E-\frac{2c^2}{\gamma-1}}\frac{1}{2E-G_{\alpha\beta}u^\alpha
u^\beta-\frac{2c^2}{\gamma-1}}\right)\right].
\end{eqnarray}
Now set
\begin{eqnarray*}
L^0(\bar{\varphi}_1)&=&\frac{2u^0}{x^0}\bar{\varphi}_1,\quad
L^1(\varphi_3)=0,\quad
L^2(\varphi_4)=\left(\frac{\rho^{\gamma-1}}{\gamma p u^0}\p_0p\right)\varphi_4,\\
L^3(\varphi_0)&=&-\left(\frac{\p_0p}{\gamma p}+\frac{2\p_0p}{\rho
(u^0)^2} +\frac{2}{(u^0)^2}(u^\alpha \p_\alpha u^0+u^\alpha u^\beta
\Gamma^0_{\alpha\beta})
+\frac{2}{x^0}\right)G(\varphi_0,\p_0)\nonumber\\&&+\frac{1}{(u^0)^2}G(\varphi_0,\p_0)^2.
\end{eqnarray*}
By multiplying $\gamma p$ to both sides of \eqref{eq432}, and comparing it to \eqref{eq210}, we see  \eqref{eq210} is equivalent to the following second order equation of pressure
\begin{eqnarray}\label{eq438}
N(U)&\triangleq&\left[\Big(2E-\frac{\gamma+1}{\gamma-1}c^2\Big)\p_0^2p-\frac{c^2}{(x^0)^2}\Delta'p\right]
+\frac{4}{x^0}\left(E-\frac{\gamma c^2}{\gamma-1}\right)\p_0p\nonumber\\
&&-\frac{2}{p}\left(E-\frac{c^2}{\gamma-1}+
\frac{c^4}{4\gamma}\frac{1}{E-\frac{c^2}{\gamma-1}}\right)(\p_0p)^2
+\frac{4\gamma p}{(x^0)^2}\left(E-\frac{c^2}{\gamma-1}\right)=F_1(U),
\end{eqnarray}
where
\begin{eqnarray}\label{eq434}
-F_1(U)&=&\gamma p H_3+\left[-\p_0p(u^\alpha\p_\alpha u^0+u^\alpha
u^\beta\Gamma^0_{\alpha\beta})+\rho^{\gamma-1}\p_0p\frac{u^\alpha}{u^0}
\p_\alpha A(s)\right]\nonumber\\
&&-\gamma p\left[\left(\frac{1}{u^0}\right)^2(u^\alpha\p_\alpha u^0+u^\alpha u^\beta\Gamma^{0}_{\alpha\beta})+\frac{2\p_0p}{\rho (u^0)^2}+\frac{2}{x_0}\right](u^\alpha\p_\alpha u^0+u^\alpha u^\beta\Gamma^{0}_{\alpha\beta})\nonumber\\
&&+{u^\alpha\p_\alpha p}\frac{2u^0}{x^0}+\frac{\gamma p}{\rho^2}G^{\alpha \beta}\p_\alpha p\p_\beta\rho+\sum_{(k,j)\ne(0,0)}\left({u^ku^j}\p_{jk}p
+{u^k}\p_ku^j\p_j p-\frac{1}{p}u^ku^j\p_kp\p_jp\right)\nonumber\\
&&-\gamma p\sum_{(l,j)\ne(0,0)}\left(\p_ju^l\p_lu^j+2
\Gamma^l_{j\beta}u^\beta\p_lu^j+\Gamma_{j\alpha}^lu^\alpha\Gamma_{l\beta}^ju^\beta\right).
\end{eqnarray}

\subsubsection{Problem (S2)} From the above computations, by Theorem \ref{thm21}, we see that Problem (S) could be written equivalently as the following Problem (S2), for those unknowns $p,\rho, u$ with regularity as assumed in Theorem \ref{thm21}.

\medskip
\fbox{
\parbox{0.90\textwidth}{
Problem (S2): {Solve \eqref{eq212}\eqref{eq212ber}\eqref{eq213} and \eqref{eq438} in $\mathcal{M}$, subjected to the boundary conditions \eqref{eq31}\eqref{eq32} and \eqref{eq48}.}}}
\medskip


\subsection{Problem (S3)}
Since we are dealing with a small
perturbation problem, in this subsection we separate the linear main
terms from the nonlinear equations \eqref{eq48} and \eqref{eq438},
and write them in the form
$\mathcal{L}(U-U_b)=\mathcal{N}(U-U_b)$, where $\mathcal{L}$ is a
linear operator, and $\mathcal{N}(U)$ are certain higher-order terms
defined below. By this way we formulate Problem (S3), which is
equivalent to Problem (S2).  

\begin{definition}\label{def401}
For $U=p, E, \rho, s, u$ etc., set $\hat{U}=U-U_b.$ A higher-order
term is an expression containing either

(i) $\hat{U}|_{\p\mathcal{M}}$ and/or its first-order tangential derivatives;

\noindent
or

(ii) product of $\hat{U}$, and/or their
derivatives $D\hat{U}, D^2\hat{U}$, where $D^ku$ is a $k^{th}$-order  derivative of $U$.
\end{definition}

\subsubsection{Linearization of boundary conditions}
Recall that for the background solution, ${p_b}$ solves \eqref{eq36}, so
\eqref{eq48} is equivalent to
\begin{eqnarray}\label{eq436}
&&\p_0(p-p_b) +\frac{2\gamma}{x^0}\left(\frac{ p
(u^0)^2}{(u^0)^2-c^2}-\frac{p_b
(u_b^0)^2}{(u_b^0)^2-c_b^2}\right)=G_1+G_2.
\end{eqnarray}
Using expressions like
\begin{eqnarray}
c^2-c_b^2
=\frac{\gamma-1}{\rho_b}(p-p_b)+\rho_b^{\gamma-1}(A(s)-A(s_b))+O(1)(p-p_b)^2+O(1)(A(s)-A(s_b))^2,
\label{eq414add2}
\end{eqnarray}
where $O(1)$ represents a bounded quantity with the bound depending only on the background solution ${U_b}$ and $|U-{U_b}|$, after some straightforward computations,  \eqref{eq436} could be further written as
\begin{eqnarray}\label{eq416}
\p_0\hat{p}+\gamma_1\hat{p}=G\triangleq G_1+G_2+G_3,
\end{eqnarray}
with $\gamma_1$ a  constant determined by the background solution at
$x^0=r_1$:
\begin{eqnarray}\label{eq417}
\gamma_1
\triangleq\frac{2}{x^0}\frac{\gamma M_b^4-M_b^2+2}{(M_b^2-1)^2}>0,
\end{eqnarray}
and
\begin{eqnarray}\label{eq418}
G_3&=&-\frac{2\gamma}{x^0}\left\{\left(\frac{(u^0)^2}{(u^0)^2-c^2}-\frac{u_b^2}{u_b^2-c_b^2}\right)\hat{p}
+p_b\frac{u_b^2(c^2-c_b^2)-c_b^2((u^0)^2-u_b^2)}{u_b^2-c_b^2}
\left[\frac{1}{(u^0)^2-c^2}-\frac{1}{u_b^2-c_b^2}\right]\right.\nonumber\\
&&\left.-\frac{p_bc_b^2}{(u_b^2-c_b^2)^2}[\underline{2(E-E_b)}-G_{\alpha\beta}u^\alpha
u^\beta]+\frac{2p_bE_b}{(u_b^2-c_b^2)^2}\left[O(1)(|\hat{p}|^2+|\widehat{A(s)}|^2)
\right.\right.\nonumber\\
&&\left.\left.+\underline{\rho_b^{\gamma-1}({A(s)-A(s_b)})}
\right]\right\}.
\end{eqnarray}
We note that $G_2, G_3$ are higher-order terms (the terms with
underlines are given by boundary data, so fulfill the item $(\rmnum{1})$ in
Definition \ref{def401}), while $G_1$ depends on the boundary values
of $u^1, u^2$ on $M_1$. Boundary condition \eqref{eq416} is an
equivalent form of \eqref{eq215}.

\subsubsection{Linearization of pressure's equation}
For \eqref{eq438}, note that $N(U_b)=0$,  we may get a linear operator $L$ and write $L(\hat{U})-F_2(U)=N(U)-N(U_b)$, with $F_2(U)$ a higher-order term. Then  \eqref{eq438} becomes $L(\hat{U})=F_1(U)+F_2(U)$. 
In fact, using \eqref{eq414add2}, by setting
$t={u_b^2}/{c_b^2}=M_b^2\in (0,1),$
direct computation yields that \eqref{eq438} can be written as
\begin{eqnarray}\label{eq441}
L(\hat{p})&\triangleq&-\frac{1}{(x^0)^2}\Delta'\hat{p}+(t-1)\p_0^2\hat{p}+\frac{4}{x^0}
b(t)\p_0\hat{p}+\frac{1}{(x^0)^2}e(t)\hat{p}+\frac{\rho_b}{
(x^0)^2}d_1(t)\hat{E}
+\frac{\rho_b^\gamma }{(x^0)^2}d_2(t)\widehat{A(s)}\nonumber\\
&=&F\triangleq \frac{1}{c_b^2}(F_1+F_2),
\end{eqnarray}
with
\begin{eqnarray}
b(t)&\triangleq&
\frac{1}{2(t-1)}[(1+2\gamma)t^2-3t+4],\label{eq442}\\
e(t)&\triangleq&\frac{2}{(t-1)^3}\left[\gamma(1+2\gamma)t^4+
(-4\gamma^2+2\gamma-3)t^3+(14-7\gamma)t^2-19t+6\right], \label{eq443}\\
d_1(t)&\triangleq&\frac{4}{(t-1)^3}\Big((2\gamma-3)t^2+8t-3\Big),\\
d_2(t)&\triangleq&\frac{-2}{\gamma-1}\frac{1}{(t-1)^3}[2+(\gamma-1)t][(2\gamma-3)t^2+8t-3],\label{eq444add}
\end{eqnarray}
and
\begin{eqnarray}\label{eq447add}
-F_2
&=&\frac{4\gamma\hat{p}}{(x^0)^2}\left(\hat{E}-\frac{1}{\gamma-1}(c^2-c_b^2)\right)
+\frac{4}{x^0}\p_0\hat{p}\left(\hat{E}-\frac{\gamma}{\gamma-1}(c^2-c_b^2)\right)\nonumber\\
&&-(c^2-c_b^2)\frac{1}{(x^0)^2}\Delta'\hat{p}+\left[2\hat{E}-\frac{\gamma+1}{\gamma-1}(c^2-c_b^2)\right]\p_0^2\hat{p}\nonumber\\
&&-\frac{u_b^2}{p_b}(\p_0\hat{p})^2+\p_0(p+p_b)\p_0\hat{p}\left[\frac{u_b^2}{p_b}-
\frac{|u|^2}{p}\right]+(\p_0p_b)^2\frac{\hat{p}}{p_b}\left[\frac{|u|^2}{p}-\frac{u_b^2}{p_b}\right]\nonumber\\
&&-\frac{(\p_0p_b)^2}{\gamma u_b^2}\left[\left(\frac{c^2+c_b^2}{p}-\frac{2c_b^2}{p_b}\right)(c^2-c_b^2)
-\frac{c_b^4}{p_b}\hat{p}\left(\frac{1}{p}-\frac{1}{p_b}\right)\right]
-\frac{c_b^4}{\gamma p_bu_b^2}(\p_0\hat{p})^2\nonumber\\
&&-\frac{(\p_0p_b)^2}{\gamma}\left(\frac{1}{|u|^2}-\frac{1}{u_b^2}\right)
\left[\left(\frac{c^4}{p}-\frac{c_b^4}{p_b}\right)+\frac{2c_b^4}{p_bu_b^2}
\left(\frac{c^2-c_b^2}{\gamma-1}-\hat{E}\right)\right]-\frac{\p_0(p+p_b)\p_0\hat{p}}{\gamma}\left(\frac{c^4}{p|u|^2}-
\frac{c_b^4}{p_bu_b^2}\right)\nonumber\\
&&+\left[-\frac{4\gamma}{\gamma-1}\frac{p_b}{(x^0)^2}-\frac{4\gamma}{\gamma-1}\frac{\p_0p_b}{x^0}
-\frac{\gamma+1}{\gamma-1}\p_0^2p_b+\frac{2}{\gamma-1}\frac{(\p_0p_b)^2}{p_b}
-\frac{2c_b^2}{\gamma p_b u_b^2}(\p_0p_b)^2\left(1+\frac{1}{\gamma-1}\frac{c_b^2}{u_b^2}\right)\right]\nonumber\\
&&\times\left[O(1)\hat{p}^2+O(1)(A(s)-A(s_b))^2\right].
\end{eqnarray}
We easily see that \eqref{eq441} is an elliptic equation of (perturbed) pressure.

\subsubsection{Problem (S3)}
We now reformulate Problem (S) equivalently as the following Problem (S3).

\medskip
\fbox{
\parbox{0.90\textwidth}{
Problem (S3): Solve functions $U=(E, A(s), p, u'=u^\alpha\p_\alpha)$ that satisfying the following problems \eqref{eq444}--\eqref{eq447}.}}

\begin{eqnarray}
&&\begin{cases}\label{eq444}
D_uE=0&\text{in}\quad \mathcal{M},\\
E=E_1(x')&\text{on}\quad M_1;
\end{cases}\\
&&\begin{cases}\label{eq445}
D_uA(s)=0&\text{in}\quad \mathcal{M},\\
A(s)=A(s_1)(x')&\text{on}\quad M_1;
\end{cases}\\
&&\begin{cases}\label{eq446}
L(\hat{p})=F(U)& \text{in}\quad \mathcal{M},\\
\hat{p}=p_0(x')-p_b &\text{on}\quad M_0,\\
\p_0\hat{p}+\gamma_1\hat{p}=G(U) &\text{on}\quad M_1;
\end{cases}\\
&&\begin{cases}\label{eq447}
G(D_uu+\frac{\grad\,p}{\rho},\p_\alpha)=0& \text{in}\quad \mathcal{M},\ \ \alpha=1,2,\\
u'=u'_1(x') &\text{on}\quad M_1.
\end{cases}
\end{eqnarray}
We note that  \eqref{eq446} is a mixed boundary value problem of a second order elliptic equation, while \eqref{eq444}, \eqref{eq445} and \eqref{eq447} are Cauchy problems of transport equations. In fact, we could write the equation in \eqref{eq447} in a local spherical coordinates as
\begin{eqnarray}\label{eq452add}
u^j\p_ju^\alpha+u^ju^m\Gamma^\alpha_{jm}=-\frac{1}{\rho}\p_i p G^{i\alpha}.
\end{eqnarray}
For $\alpha=1$, it reads
\begin{eqnarray}\label{eq448}
u^j\p_ju^1+\frac{2u_b^0}{x^0}u^1=-\frac{1}{(x^0)^2\rho}\p_1p+h_1(U),
\end{eqnarray}
with
\begin{eqnarray}\label{eq454add}
h_1(U)=(\sin x^1\cos x^1)(u^2)^2+\frac{2}{x^0}(u_b^0-u^0)u^1.
\end{eqnarray}
For $\alpha=2$, it reads, in the local coordinates used above, that
$$u^j\p_ju^2+\frac{2}{x^0}u^0u^2=-2\cot x^1 u^1u^2-\frac{1}{\rho}\frac{1}{(x^0\sin x^1)^2}\p_2p.$$
This equation has an artificial  singularity when $\sin x^1=0$. It is obvious that we can avoid this by using another local (spherical) coordinates. So by symmetry of $u^1$ and $u^2$, it suffices to solve \eqref{eq448} and obtain an estimate.

\subsection{Proof of Theorem \ref{thm41}}\label{sec43}
For $k=2,3$, and $U=(p,s,E, u'=u^1\p_1+u^2\p_2)$, suppose that $p\in C^{k,\alpha}(\overline{\mathcal{M}})$ and $s,E, u^\beta\p_\beta\in C^{k-1,\alpha}(\overline{\mathcal{M}})$. We define the  norm
\begin{eqnarray}\label{eq452}
\norm{U}_k\triangleq \norm{p}_{C^{k,\alpha}(\overline{\mathcal{M}})}+
\norm{s}_{C^{k-1,\alpha}(\overline{\mathcal{M}})}+
\norm{E}_{C^{k-1,\alpha}(\overline{\mathcal{M}})}+
\sum_{\beta=1}^2\norm{u^\beta}_{C^{k-1,\alpha}(\overline{\mathcal{M}})}.
\end{eqnarray}
Let $K$ be a positive number to be chosen. We define $X_{K\varepsilon}$ to be the (non-empty) set of functions $U$ so that \eqref{eq31} and \eqref{eq32} hold, and
 $\norm{U-U_b}_3\le K\varepsilon.$
To prove Theorem \ref{thm41}, we construct a mapping $\mathcal{T}$ on $X_{K\varepsilon}$ for suitably chosen $K$ and $\varepsilon$, and show that it contracts under the norm $\norm{\cdot}_2$. Then by a Banach fixed point theorem, $\mathcal{T}$ has uniquely one fixed point $U\in X_{K\varepsilon}$, which is exactly a solution to Problem (S3).

\subsubsection{Construction of mapping $\mathcal{T}$}
For any $U\in X_{K\varepsilon}$, by the following process we define a mapping $\mathcal{T}: U\mapsto \tilde{U}$.  Set $\hat{U}=\tilde{U}-U_b$. Then we only need to determine $\hat{U}$. We also use $C$ to denote generic positive constants which might be different in different lines.
\medskip

\paragraph{\bf Determination of $\tilde{E}$ and $\widetilde{A(s)}$}

Noting that $s_b$ and  $E_b$ are constants, we solve the unknowns $\hat{E}, \widehat{A(s)}$ from the following Cauchy problems of linear transport equations, where the velocity field $u$ is given as part of $U\in X_{K\varepsilon}$:
\begin{eqnarray}
\begin{cases}\label{eq454}
D_u\hat{E}=0&\text{in}\quad \mathcal{M},\\
\hat{E}=E_1-E_b&\text{on}\quad M_1;
\end{cases}\quad
\begin{cases}
D_u\widehat{A(s)}=0&\text{in}\quad \mathcal{M},\\
\widehat{A(s)}=A(s_1)-A(s_b)&\text{on}\quad M_1.
\end{cases}
\end{eqnarray}
Since $u\in C^{2,\alpha}(\overline{\mathcal{M}})$, and recall that ${E_1-E_b}, s_1-s_b\in C^{3,\alpha}(M_1)$,  by Theorem \ref{thm61}, we have

\begin{lemma}\label{lem41}
There are uniquely $\hat{E}, \widehat{A(s)}\in C^{2,\alpha}(\overline{\mathcal{M}})$ solve \eqref{eq454}. In addition,
\begin{eqnarray}\label{eq456}
\norm{\hat{E}}_{C^{2,\alpha}(\overline{\mathcal{M}})}+\norm{\widehat{A(s)}}_{C^{2,\alpha}
(\overline{\mathcal{M}})}\le C_1\left(\norm{\hat{E}}_{C^{2,\alpha}({M}_1)}+\norm{\widehat{A(s)}}_{C^{2,\alpha}
({M_1})}\right)\le C_1\varepsilon,
\end{eqnarray}
with $C_1$  a positive constant depending only on $U_b$. The second inequality holds provided that \eqref{eq42add} is valid.
\end{lemma}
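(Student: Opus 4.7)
The plan is to reduce both Cauchy problems in \eqref{eq454} to the general transport-equation theorem (Theorem \ref{thm61}) stated in the appendix. Both are decoupled linear first-order equations $D_u w = 0$ sharing the same transport field $u$ determined by $U \in X_{K\varepsilon}$, and differ only in their boundary data prescribed on $M_1$. The first step is to verify the non-characteristic condition: since $u_b = u_b^0\,\p_0$ with $u_b^0 > 0$ on $\overline{\mathcal{M}}$ by Lemma \ref{lem31}, and $U$ is a $C^{2,\alpha}$ perturbation of $U_b$ of size at most $K\varepsilon$, by choosing $\varepsilon_0$ small enough (depending only on $U_b$ and $K$) one has $u^0 > 0$ uniformly on $\overline{\mathcal{M}}$. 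Hence every integral curve of $u$ is globally transverse to both $M_0$ and $M_1$, originates on $M_1$, and the backward flow map gives a $C^{2,\alpha}$-diffeomorphism from $\overline{\mathcal{M}}$ to a subset of $M_1 \times [0,T]$ for some $T$ controlled by $U_b$.

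Next, I would apply Theorem \ref{thm61} directly to each Cauchy problem, obtaining the unique $C^{2,\alpha}(\overline{\mathcal{M}})$ solutions $\hat{E}$ and $\widehat{A(s)}$ together with the first inequality in \eqref{eq456}. Conceptually, the solution is the composition of the $C^{3,\alpha}$ boundary datum on $M_1$ with the backward flow of $u$; since the flow map inherits only the regularity of the vector field $u$, namely $C^{2,\alpha}$, the composition lies in $C^{2,\alpha}(\overline{\mathcal{M}})$, which accounts for the unavoidable loss of one derivative relative to the data. The implicit constant $C_1$ depends on $U_b$ through a uniform positive lower bound for $u^0$ and an upper bound for $\norm{u}_{C^{2,\alpha}(\overline{\mathcal{M}})}$, both of which are controlled purely in terms of the background solution provided $K\varepsilon$ is small enough.

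The second inequality in \eqref{eq456} is then immediate from the hypothesis \eqref{eq42add}: the $C^{2,\alpha}(M_1)$ norms of $\hat{E}|_{M_1} = E_1 - E_b$ and $\widehat{A(s)}|_{M_1} = A(s_1) - A(s_b)$ are bounded by their $C^{3,\alpha}(M_1)$ counterparts, and the latter are at most $C\varepsilon$. To pass between $s_1 - s_b$ and $A(s_1) - A(s_b)$ one uses the identity $A(s) - A(s_b) = A(s_b)\bigl(e^{(s-s_b)/c_\nu} - 1\bigr)$ and smoothness of the exponential, which costs only a multiplicative constant depending on $s_b$.

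I do not expect a serious obstacle: the whole content of the lemma is well-posedness of a smooth non-characteristic linear transport problem with a companion Schauder-type estimate, which is exactly the scope of Theorem \ref{thm61}. The only subtlety worth verifying is that the constant $C_1$ stays uniform as $U$ ranges over $X_{K\varepsilon}$, but this follows at once from the uniform bounds on $u$ and $1/u^0$ enforced by the smallness of $K\varepsilon$.
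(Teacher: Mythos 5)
Your proposal is correct and matches the paper's own (very terse) proof: both reduce \eqref{eq454} to the appendix transport result Theorem \ref{thm61}, relying on $u^0>0$ uniformly on $\overline{\mathcal{M}}$ for $U\in X_{K\varepsilon}$ with $\varepsilon_0$ small, the $C^{2,\alpha}$ regularity of $u$ to control the flow-map regularity and hence the first inequality in \eqref{eq456}, and \eqref{eq42add} together with smoothness of $A(\cdot)$ for the second inequality. The extra verification you supply (uniformity of the constant over $X_{K\varepsilon}$, passing from $s_1-s_b$ to $A(s_1)-A(s_b)$) is exactly the content the paper leaves implicit.
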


Once we solved $\hat{E}$ and $\widehat{A(s)}$, we get $\tilde{E}=E_b+\hat{E}, \ \widetilde{A(s)}=A(s_b)+\widehat{A(s)}.$

\paragraph{\bf Determination of $\tilde{p}$}
Now consider the following mixed boundary value problem of $\hat{p}$ (comparing to problem \eqref{eq446}):
\begin{eqnarray}\label{eq460}
\begin{cases}
\mathcal{L}(\hat{p})\triangleq-\frac{1}{(x^0)^2}\Delta'\hat{p}+(t-1)\p_0^2\hat{p}+\frac{4}{x^0}
b(t)\p_0\hat{p}+\frac{1}{(x^0)^2}e(t)\hat{p}\\
\qquad\qquad=-\frac{\rho_b}{(x^0)^2}d_1(t)\hat{E}
-\frac{\rho_b^\gamma }{(x^0)^2}d_2(t)\widehat{A(s)}+F(U),\\
\hat{p}=p_0-p_b \qquad\text{on}\quad M_0,\\
\p_0\hat{p}+\gamma_1\hat{p}=G(U) \qquad\text{on}\quad M_1.
\end{cases}
\end{eqnarray}
Note here that for the two terms $\frac{\rho_b}{(x^0)^2}d_1(t)\hat{E}$ and  $\frac{\rho_b^\gamma }{(x^0)^2}d_2(t)\widehat{A(s)}$ on the right-hand side, we take $\hat{E}$ and $\widehat{A(s)}$ to be the functions solved from Lemma \ref{lem41}.

We now specify the nonhomogeneous  term $F(U)$. In $F_2$ (see \eqref{eq447add}), all $\hat{U}$ should be replaced by $U-U_b$ (recall that the $U\in X_{K\varepsilon}$ has been fixed). So by direct computations we get that
$\norm{F_2}_{C^{1,\alpha}(\overline{\mathcal{M}})}\le CK^2\varepsilon^2.$
In the expression of $F_1$ (see \eqref{eq434}), we take all $U$ to be the given one.  So by the smallness of $u^\alpha$, we have
$\norm{\gamma pH_3}_{C^{1,\alpha}(\overline{\mathcal{M}})}\le CK^2\varepsilon^2, \norm{F_1}_{C^{1,\alpha}(\overline{\mathcal{M}})}\le CK^2\varepsilon^2.$
Therefore we obtain, for a positive constant $C$ depending only on $U_b$, that
\begin{eqnarray}\label{eq461}
\norm{F(U)}_{C^{1,\alpha}(\overline{\mathcal{M}})}\le CK^2\varepsilon^2
\end{eqnarray}

Next we specify the term $G(U)$ in boundary conditions.
For $G_1$ (see \eqref{eq49}), the $u^\alpha$ should be the boundary conditions $(u'_1)^\alpha$ (hence belong to $C^{3,\alpha}(M_1)$), and the others are replaced by the given $U$. So by \eqref{eq42add} we have
$\norm{G_1}_{C^{2,\alpha}(M_1)}\le C\varepsilon.$
For the term $G_2$ (see \eqref{eq410}), $u^\alpha$ and $A(s), E$ should be the given boundary data $(u'_1)^\alpha$ and $A(s_1), E_1$ respectively, while the others are replaced by the given $U\in X_{K\varepsilon}$. Hence it still lies in $C^{2,\alpha}(M_1)$ and we have
$\norm{G_2}_{C^{2,\alpha}(M_1)}\le C\varepsilon^2+CK\varepsilon^2.$
For the expression of $G_3$ (see \eqref{eq418}), except the
underline terms are replaced by the given boundary data, all the other $U$ can be taken as the given $U$ in
$X_{K\varepsilon}$, and it easily follows that
$\norm{G_3}_{C^{2,\alpha}(M_1)}\le CK^2\varepsilon^2+C\varepsilon.$
So in all, we obtain
\begin{eqnarray}\label{eq462}
\norm{G(U)}_{C^{2,\alpha}(M_1)}\le CK^2\varepsilon^2+C\varepsilon,
\end{eqnarray}
if we choose later, without loss of generality, that $K>1$.

Under the assumptions of the Theorem \ref{thm41}, {\it the coefficient $e(t)$ is nonnegative.} Recall also that $\gamma_1>0$ (see \eqref{eq417}). So by the standard theory of second order elliptic equations with Dirichlet and oblique derivative conditions (\cite[Chapter 6]{GT}), we have

\begin{lemma}\label{lem42}
There is uniquely one solution $\hat{p}\in C^{3,\alpha}(\overline{\mathcal{M}})$ to problem  \eqref{eq460}. In addition, there is a constant $C$ depending only on the background solution $U_b$ and $\mathcal{M}$, so that there holds
\begin{eqnarray}\label{eq463}
\norm{\hat{p}}_{C^{3,\alpha}(\overline{\mathcal{M}})}&\le& C\Big(\norm{G(U)}_{C^{2,\alpha}(M_1)}+\norm{p_0-p_b}_{C^{3,\alpha}(M_0)}+
\norm{F(U)}_{C^{1,\alpha}(\overline{\mathcal{M}})}\nonumber\\
&&
+\norm{\widehat{A(s)}}_{C^{1,\alpha}(\overline{\mathcal{M}})}+
\norm{\hat{E}}_{C^{1,\alpha}(\overline{\mathcal{M}})}\Big)
\nonumber\\
&\le& C(K^2\varepsilon^2+\varepsilon).
\end{eqnarray}
\end{lemma}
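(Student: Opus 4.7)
The problem \eqref{eq460} is a linear mixed Dirichlet--Robin boundary value problem on the compact manifold-with-boundary $\overline{\mathcal{M}}=[r_0,r_1]\times S^2$. My plan is to verify that $\mathcal{L}$ is uniformly elliptic with the sign conditions supporting the maximum principle, deduce uniqueness, use Fredholm theory to get existence, and then extract the quantitative estimate from a global Schauder bound together with the a priori data bounds \eqref{eq456}, \eqref{eq461}, \eqref{eq462}. Since $t=M_b^2\in(0,1)$ on $[r_0,r_1]$ with bounds depending only on $U_b$, the principal part $-(1-t)\p_0^2-\frac{1}{(x^0)^2}\Delta'$ is uniformly elliptic with smooth coefficients in $x^0$ alone. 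The zeroth-order coefficient $\frac{e(t)}{(x^0)^2}$ is nonnegative under hypothesis \eqref{eq41}: since $(t-1)^3<0$ on $(0,1)$, the sign of $e(t)$ in \eqref{eq443} is opposite to that of the polynomial in \eqref{eq41}. The Robin coefficient $\gamma_1$ is strictly positive by \eqref{eq417}, and $\p_0$ is the unit outward normal along $M_1$, so the boundary condition on $M_1$ is a genuine oblique derivative with a nonnegative zero-order coefficient.

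For uniqueness, I apply to the homogeneous problem the strong maximum principle \cite[Theorem~3.5]{GT} and the Hopf boundary-point lemma \cite[Lemma~3.4]{GT}, after rewriting $\mathcal{L}$ in the standard form with positive-definite leading symbol and zeroth-order coefficient $-e(t)/(x^0)^2\le 0$. An interior positive maximum would force $\hat{p}$ to be constant, contradicting $\hat{p}|_{M_0}=0$; a positive maximum on $M_1$ would give $\p_0\hat{p}>0$ and hence $\p_0\hat{p}+\gamma_1\hat{p}>0$, contradicting the homogeneous Robin condition; and a positive maximum on $M_0$ is incompatible with the Dirichlet condition itself. The symmetric argument applied to $-\hat{p}$ forces $\hat{p}\equiv 0$. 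For existence, the pair of boundary operators (Dirichlet on $M_0$, oblique derivative with $\gamma_1>0$ on $M_1$) satisfies the complementing Lopatinskii condition for the uniformly elliptic $\mathcal{L}$, so the associated map is Fredholm of index zero on Schauder spaces over the compact $\overline{\mathcal{M}}$, and uniqueness together with the Fredholm alternative yields a solution in $C^{3,\alpha}(\overline{\mathcal{M}})$.

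The standard global Schauder estimate \cite[Chapter~6]{GT}, obtained by covering $S^2$ with finitely many coordinate charts and patching interior and boundary Schauder estimates via a partition of unity, produces the first inequality in \eqref{eq463}. Substituting the bounds \eqref{eq456} on $\hat{E}$ and $\widehat{A(s)}$, \eqref{eq461} on $F(U)$, \eqref{eq462} on $G(U)$, and the smallness hypothesis \eqref{eq42add} on $p_0-p_b$ yields the second inequality $C(K^2\varepsilon^2+\varepsilon)$. The main work is the sign bookkeeping required to confirm that \eqref{eq41} is exactly what makes $e(t)\ge 0$ and that $\gamma_1>0$ combines with the outward orientation of $M_1$ to give the maximum-principle-compatible Robin operator; once these sign conditions are in place, the existence, uniqueness, and estimate are standard consequences of elliptic theory for compact manifolds with smooth boundary.
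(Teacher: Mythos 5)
Your proposal is correct and takes essentially the same approach as the paper: the paper simply observes that condition \eqref{eq41} makes $e(t)\ge 0$ and that $\gamma_1>0$, and then cites the standard Schauder theory for mixed Dirichlet/oblique-derivative problems from \cite[Chapter 6]{GT}. You have merely unpacked the details (maximum-principle uniqueness via Hopf's lemma, Fredholm existence, global Schauder estimate, then substitution of \eqref{eq456}, \eqref{eq461}, \eqref{eq462} and \eqref{eq42add}), and your sign bookkeeping correctly identifies \eqref{eq41} as the source of $e(t)\ge0$ and $\gamma_1>0$ together with the outward orientation of $\p_0$ on $M_1$ as the compatibility with the maximum principle.
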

Hence we obtain that $\tilde{p}=\hat{p}+p_b$.

\begin{remark}\label{rm42}
We see that \eqref{eq41} is used to guarantee that problem \eqref{eq460} has uniquely one solution. A detailed study of the spectrum of the operator $\mathcal{L}$ with the homogeneous boundary conditions (like what we do in Section 5 by separation of variables) would definitely refine it. However, for our present purpose of clarifying the basic ideas, we are content ourselves with \eqref{eq41}.
\end{remark}

\paragraph{\bf Determination of $\tilde{u}'$}
Now we solve $\tilde{u}^\alpha$ ($\alpha=1,2$) from the transport equations $G(D_uu+\frac{\grad\,p}{\rho},\p_\alpha)=0$. For $\alpha=1$, we have the problem
\begin{eqnarray}\label{eq466}
\begin{cases}
u^j\p_j\tilde{u}^1+\frac{2u_b^0}{x^0}\tilde{u}^1=-\frac{1}{(x^0)^2\rho}\p_1\tilde{p}
+h_1(U)&\text{in}\quad\mathcal{M},\\
\tilde{u}^1=(u'_1)^1 &\text{on}\quad M_1.
\end{cases}
\end{eqnarray}
Here $(u'_1)^1$ is the given boundary data; $\tilde{p}$ on the right-hand side is given by Lemma \ref{lem42}, while  $u$ and $U$ in
\begin{eqnarray}\label{eq465}
h_1(U)=(\sin x^1\cos x^1)(u^2)^2+\frac{2}{x^0}(u_b^0-u^0)u^1
\end{eqnarray}
are the one we had fixed in $X_{K\varepsilon}$.

We have the following lemma due to Theorem \ref{thm61}.
\begin{lemma}\label{lem43}
There is uniquely one solution $\tilde{u}^1\in C^{2,\alpha}(\overline{\mathcal{M}})$ to problem \eqref{eq466}. In addition,
\begin{eqnarray}\label{eq467add}
\norm{\tilde{u}^1}_{C^{2,\alpha}(\overline{\mathcal{M}})}\le \norm{u'^1_1}_{C^{2,\alpha}({M}_1)}+C\Big(\norm{\tilde{p}}_{C^{3,\alpha}(\overline{\mathcal{M}})}+
\norm{h_1(U)}_{C^{2,\alpha}(\overline{\mathcal{M}})}\Big)\le C\Big(K^2\varepsilon^2+\varepsilon\Big)
\end{eqnarray}
for a positive constant $C$ depending only on the background solution and $r_0, r_1$.
\end{lemma}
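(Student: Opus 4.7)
The plan is to invoke Theorem \ref{thm61} of the appendix, which provides well-posedness and $C^{k,\alpha}$ estimates for linear transport equations with data prescribed on a transversal boundary. First, I would verify the transversality and the global reach of characteristics. Since $U\in X_{K\varepsilon}$ with $\varepsilon_0$ small, the velocity field $u=u^0\p_0+u^\alpha\p_\alpha$ satisfies $u^0\ge u_b^0/2>0$ and $|u^\alpha|\le K\varepsilon$, so its integral curves are $C^{2,\alpha}$-close to the radial rays; tracing them backward from $M_1$, each one exits $\overline{\mathcal{M}}$ through $M_0$ in a uniformly bounded $x^0$-time while remaining inside $\overline{\mathcal{M}}$. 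The zeroth-order coefficient $\frac{2u_b^0}{x^0}$ is smooth and bounded, so the operator on the left-hand side of \eqref{eq466} fits the framework of Theorem \ref{thm61}.

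Next, I would verify that the forcing $f(U)\triangleq-\frac{1}{(x^0)^2\rho}\p_1\tilde{p}+h_1(U)$ belongs to $C^{2,\alpha}(\overline{\mathcal{M}})$. Since $\tilde{p}\in C^{3,\alpha}$ by Lemma \ref{lem42}, the tangential derivative $\p_1\tilde{p}$ lies in $C^{2,\alpha}$; moreover, because $p_b$ depends only on $x^0$, the background cancels and $\p_1\tilde{p}=\p_1\hat{p}$. The density $\rho$, computed from $\tilde{p}$ and $\widetilde{A(s)}$ (or from the fixed $U$) via the constitutive relation, stays bounded away from zero and lies in $C^{2,\alpha}$. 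The term $h_1(U)$ in \eqref{eq465} is a sum of products of $C^{2,\alpha}$ quantities that are individually of size $\mathcal{O}(K\varepsilon)$, so $\|h_1(U)\|_{C^{2,\alpha}}\le CK^2\varepsilon^2$. With boundary data $(u'_1)^1\in C^{3,\alpha}(M_1)\subset C^{2,\alpha}(M_1)$, Theorem \ref{thm61} then yields the existence and uniqueness of $\tilde{u}^1\in C^{2,\alpha}(\overline{\mathcal{M}})$ together with the schematic bound displayed in \eqref{eq467add}.

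To extract the final estimate $C(K^2\varepsilon^2+\varepsilon)$, I would combine three ingredients: $\|(u'_1)^1\|_{C^{2,\alpha}(M_1)}\le\varepsilon$ from \eqref{eq42add}; the cancellation $\p_1\tilde{p}=\p_1\hat{p}$ together with Lemma \ref{lem42} to obtain $\|\p_1\tilde{p}\|_{C^{2,\alpha}}\le\|\hat{p}\|_{C^{3,\alpha}}\le C(K^2\varepsilon^2+\varepsilon)$; and the aforementioned quadratic bound on $h_1(U)$. The case $\alpha=2$ is handled identically after switching to a local spherical chart that avoids the coordinate singularity where $\sin x^1=0$, as the paper notes just after \eqref{eq454add}.

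The main delicate point is the regularity matching. The source term contains $\p_1\tilde{p}$, which demands $\tilde{p}\in C^{3,\alpha}$ in order to produce a $C^{2,\alpha}$ solution of the transport equation — precisely the one-derivative gap built into the decomposition of Theorem \ref{thm21} and the reason pressure is solved one order smoother than the other unknowns (\emph{cf.} the remark following Theorem \ref{thm21}). Beyond this structural point, no essential difficulty arises: everything reduces to a direct application of Theorem \ref{thm61} and the elementary multiplicative estimates in $C^{2,\alpha}$.
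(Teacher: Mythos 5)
Your proposal is correct and follows the same route the paper takes: the paper's own treatment of Lemma~\ref{lem43} is a one-line invocation of Theorem~\ref{thm61}, and you flesh out exactly the details that invocation implicitly relies on — transversality of the flow of $u$ to the spheres (guaranteed by $u^0>0$ uniformly), regularity and smallness of the source term, and the coordinate-chart switch to handle $\tilde{u}^2$. Two details you handle that the paper glosses over are worth noting: the characteristics are traced backward from $M_1$ rather than forward from $M_0$ as written in Theorem~\ref{thm61}, but this is an innocuous relabelling; and the bound $\norm{\tilde p}_{C^{3,\alpha}}$ in the middle expression of \eqref{eq467add} is literally $\mathcal{O}(1)$, so the displayed chain only makes sense after the cancellation $\p_1\tilde p=\p_1\hat p$ you correctly identify (equivalently the paper should have written $\norm{\hat p}_{C^{3,\alpha}}$ or $\norm{\p_1\tilde p}_{C^{2,\alpha}}$ there). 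Your observation on the one-derivative gap — $\tilde p\in C^{3,\alpha}$ feeding a $C^{2,\alpha}$ source for a $C^{2,\alpha}$ transport solution — is exactly the structural point the paper flags in the remark after Theorem~\ref{thm21}.
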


As explained earlier, we could change the coordinates to solve $\tilde{u}^2$ and similar results hold. Then from $U\in X_{K\varepsilon}$ we obtain uniquely one  $\tilde{U}=(\tilde{p}, \tilde{s}, \tilde{E}, \tilde{u}^\alpha\p_\alpha)$, and the estimates \eqref{eq456}\eqref{eq463}\eqref{eq467add} yield that
$\norm{\tilde{U}-U_b}_3\le C(K^2\varepsilon^2+\varepsilon).$
By choosing $K=\max\{2C, 1\}$, and $\varepsilon_0\le \min\{1/K^2,1\}$, we  have
$\norm{\tilde{U}-U_b}_3\le K\varepsilon$
for all  $\varepsilon\le \varepsilon_0.$
Hence we proved that $\tilde{U}\in X_{K\varepsilon}$, and the mapping $\mathcal{T}: X_{K\varepsilon}\to X_{K\varepsilon}$ is well-defined.

\subsubsection{Contraction of the mapping $\mathcal{T}$}
For $U^{(1)}, U^{(2)}\in X_{K\varepsilon}$, set $\tilde{U}^{(k)}=\mathcal{T}(U^{(k)})$, $k=1,2$. We wish to show that if $\varepsilon_0$ is further small, then
\begin{eqnarray}\label{eq467}
\norm{\tilde{U}^{(1)}-\tilde{U}^{(2)}}_2\le\frac12\norm{U^{(1)}-U^{(2)}}_2.
\end{eqnarray}
The idea to establish \eqref{eq467} is to consider the problems satisfied by $\hat{U}=\tilde{U}^{(1)}-\tilde{U}^{(2)}$. Since the process is standard once we understand the definition of $\mathcal{T}$, and is quite similar (but simpler) to that described in Section \ref{sec532}, we omit the details here.

Then by Banach fixed point theorem, $\mathcal{T}$ has one and only one fixed point,  say, $U$, in $X_{K\varepsilon}$. By the construction of the mapping $\mathcal{T}$, the fixed point is a solution to Problem (S3). On the contrary, for a solution to Problem (S3) which lies in $X_{K\varepsilon}$, it must be a fixed point of $\mathcal{T}$.  The proof of  Theorem \ref{thm41} is completed.

\section{Stability of spherically symmetric transonic shocks}\label{sec5}

In this section we study the stability of spherically symmetric transonic shocks under multidimensional perturbations of boundary conditions, which was treated in \cite{BaeFeldman} by using the ``non-isentropic potential flow model", and in \cite{CY2013} by considering the full Euler system.  In \cite{CY2013} only uniqueness was proved; namely, if the perturbations of the upcoming supersonic flow and the back pressure are small, then there will be only one transonic shock solution in some  function space, provided that it exits. We will show below that in a suitable class of functions, a disturbed transonic shock solution does exist and is unique in a neighborhood of the background solution $U_b$, provided that the S-condition, which also occurred in \cite{CY2013}, is valid. The main difficulty is to decompose the Rankine-Hugoniot conditions (R-H conditions) on shock-front in a way compatible to the decomposition of the  Euler system stated in Theorem \ref{thm21}. We will discover many subtle intrinsic structures lying beneath the related free boundary problem.

In the following we firstly review the formulation of the transonic shock problem (T) and the existence of background solutions, and state the main theorem, {\it i.e.} Theorem \ref{thm501}. Then we start to decompose the R-H conditions, and formulate Problem (T) step by step into the more tractable problems (T1), (T2), (T3), (T4), each of which is equivalent to Problem (T). Finally, Theorem \ref{thm501} is proved by applying a nonlinear iteration method to the nonlinear free boundary problem (T4).

\subsection{Transonic shock problem (T) and main result}
Let $\mathcal{M}=(r_0, r_1)\times S^2$ be a spherical shell with entry $M_0=\{r_0\}\times S^2$ and exit $M_1=\{r_1\}\times S^2$. We use $U=(u,p,s)$ to represent the state of the gas flows in $\mathcal{M}$.
Suppose that
\begin{eqnarray}\label{eq501}
S^\psi=\{(x^0, x')\in\mathcal{M}: x^0=\psi(x'), \ x'\in S^2\}
\end{eqnarray}
is a surface, where $\psi: S^2\to\mathcal{M}$ is a $C^1$ function. The normal vector field on $S^\psi$ is given by
\begin{eqnarray}\label{eq502add}
n=(\p_\alpha \psi G^{\alpha\beta}\p_\beta-G^{00}\p_0)|_{S^\psi}.
\end{eqnarray}
We also set $M_\psi^-=\{x\in\mathcal{M}: x^0<\psi(x'), \ x'\in S^2\}$ to be the {\it supersonic region}, and
$M_\psi^+=\{x\in\mathcal{M}: x^0>\psi(x'), \ x'\in S^2\}$ to be the {\it subsonic region}.

\begin{definition}[Transonic shock]
Let $\psi\in C^1(S^2)$ and $U^\pm\in C^1(\mathcal{M}_\psi^\pm)\cap C(\overline{\mathcal{M}_\psi^\pm})$. We say that $U=(U^-, U^+; \psi)$ is a {\it transonic shock solution}, if
\begin{itemize}
\item[1)] $U^\pm$ solve the Euler system \eqref{eq104} in $\mathcal{M}_\psi^\pm$ in the pointwise sense;
\item[2)] $U^-$ is supersonic, and $U^+$ is subsonic;
\item[3)] the following R-H conditions hold across $S^\psi$:
\begin{eqnarray}\label{eq502}
[[G(u,n)\rho u+pn]]=0,\quad
[[G(u,n)\rho]]=0,\quad
[[G(u,n)\rho E]]=0,
\end{eqnarray}
where $[[f(U)]]\triangleq f(U^+|_{S^\psi})-f(U^-|_{S^\psi})$ denotes the jump of a quantity $f(U)$ across $S^\psi$;
\item[4)] there holds the following physical entropy condition
\begin{eqnarray}\label{eq503}
[[p]]=p^+|_{S^\psi}-p^-|_{S^\psi}>0.
\end{eqnarray}
\end{itemize}
\end{definition}

By the definition we infer that a transonic shock solution is a weak entropy solution of the steady Euler system ({\it cf.} Section 4.3 and Section 4.5 in \cite{Da}).

To formulate the transonic shock problem, we also need to specify boundary conditions.
Since the flow $U^-$ is supersonic near the entry $M_0$, the following Cauchy data should be given:
\begin{eqnarray}\label{eq504}
U=U_0^-(x') \qquad \text{on}\quad M_0.
\end{eqnarray}
Here we also require that $(u^0)_0^->c^-_0$ to make sure the steady Euler system is symmetric hyperbolic in the positive $x^0$-direction on $M_0$.
On the exit $M_1$, it is known now that one and only one boundary condition is necessary. From the physical considerations, as in the studies of subsonic flows, we propose the pressure
\begin{eqnarray}\label{eq505}
p=p_1(x')\qquad \text{on}\quad M_1.
\end{eqnarray}
It turns out that this is also a mathematically challenging boundary condition ({\it cf.} \cite{Yuan2007} and discussions at the end of \cite{yuan2006}).

\medskip
\fbox{
\parbox{0.90\textwidth}{
Problem (T): Find a transonic shock solution in $\mathcal{M}$ which satisfies the  boundary conditions \eqref{eq504} and \eqref{eq505} pointwisely.}}

\subsubsection{Background solution} \label{sec511}
The existence of  spherically symmetric transonic shock solutions $U_b=(U_b^-, U_b^+; r_b)$ (which are called as {\it background solutions} in the sequel) to Problem (T) has been established in \cite{Yu3} (see also \cite{CY2013}).  For given
$[r_0, r_1]$, we note that  $U_b^+$ is
determined by, and actually depends analytically on, the five parameters $(\gamma, r_b, p_b^+(r_b),
\rho_b^+(r_b), t(r_b))$ with $t(r_b)=(M_b^+)^2(r_b)\in(0,1)$, $\gamma>1,\
r_b\in(r^0,r^1), \ p_b^+(r_b)>0,$ and $\rho_b^+(r_b)>0$.  As a consequence, all the coefficients on the left-hand side of \eqref{eq441}, such as $e(t)$, $\rho_b^\gamma d_2(t)$, depend analytically on these
parameters. In addition, it is useful to note that the spherically symmetric subsonic flow $U_b^+$ could be extended to $[r_b-h^\sharp, r_1]\times S^2$, for a small constant $h^\sharp$ depending solely on these parameters ({\it cf.} \cite[p.323]{Yu3}).

\subsubsection{The main result} \label{sec512}
We now state the forth main result of this paper.
\begin{theorem}\label{thm501}
Let $U_b$ and $\gamma$ satisfy the S-Condition (see Definition \ref{def52} and Lemma \ref{lem54}), and  $\alpha\in(0,1)$. There exist $\varepsilon_0$ and $C_*$ depending
only on $U_b$ and $\gamma, \alpha, r_0, r_1$ such that, if the upcoming supersonic
flow $U_0^-$ on $M_0$ and the back pressure on $M_1$ satisfy
\begin{eqnarray}\label{th101}
\norm{U_0^--U_b^-}_{C^{4}(M_0)}\le\varepsilon\le\varepsilon_0,&\\
\norm{p_1-p_b^+}_{C^{3,\alpha}(M_1)}\le\varepsilon\le\varepsilon_0,&\label{eq58add}
\end{eqnarray}
then there exists a transonic shock
solution $U=(U^-,U^+; \psi)$ to Problem (T), so that  $\psi\in
C^{4,\alpha}({S}^2)$, $U^-\in C^{4}(\overline{\mathcal{M}_\psi^-})$, $p^+\in C^{3,\alpha}(\overline{\mathcal{M}_\psi^+})$, $u^+, \rho^+, s^+\in C^{2,\alpha}(\overline{\mathcal{M}_\psi^+})$, $u^+|_{S^\psi}, \rho^+|_{S^\psi}, s^+|_{S^\psi}\in C^{3,\alpha}(S^2)$, and
\begin{eqnarray}\label{th102}
\norm{\psi-r_b}_{C^{4,\alpha}({S}^2)}\le C_*\varepsilon,&\\
\label{th103}
\norm{U^--U^-_b}_{C^4(\overline{\mathcal{M}_\psi^-})}\le C_*\varepsilon,&\\
\label{th104}
\norm{\left.U^+\right|_{S^\psi}-\left.U_b^+\right|_{S^\psi}}_{C^{3,\alpha}(S^2)}+\norm{U^+-U^+_b}_{3}\le C_*\varepsilon.&
\end{eqnarray}
Furthermore, such solution is unique in the class of functions $\psi, U^-, U^+$ with
\begin{eqnarray}\label{th105}
\norm{\psi-r_b}_{C^{3,\alpha}({S}^2)}\le C_*\varepsilon,&\\
\label{th106}
\norm{U^--U^-_b}_{C^4(\overline{\mathcal{M}_\psi^-})}\le C_*\varepsilon,&\\
\label{th107}
\norm{\left.U^+\right|_{S^\psi}-\left.U_b^+\right|_{S^\psi}}_{C^{2,\alpha}(S^2)}+\norm{U^+-U^+_b}_{2}\le C_*\varepsilon.&
\end{eqnarray}
The norm $\norm{\cdot}_k$ is defined by \eqref{eq452}, with $\mathcal{M}$ there replaced by ${\mathcal{M}_\psi^+}$.
\end{theorem}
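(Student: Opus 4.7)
The plan is to solve the problem in two stages: first determine the supersonic part, then set up a fixed-point iteration for the subsonic part coupled to the free shock front. In the supersonic stage, since $(u^0)_0^->c_0^-$ on $M_0$, the Euler system \eqref{eq104} is symmetric hyperbolic in the $x^0$-direction there, and the classical well-posedness theory for such systems combined with $\|U_0^--U_b^-\|_{C^4(M_0)}\le\varepsilon$ produces a unique $C^4$ supersonic solution $U^-$ on a neighborhood of $M_0$ that extends past $\{r=r_b\}$, hence across any candidate shock-front $S^\psi$ with $\|\psi-r_b\|_{C^{3,\alpha}(S^2)}\le C_*\varepsilon$. This immediately yields \eqref{th103} and reduces the problem to determining $(\psi,U^+)$.

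The key structural step is to decompose the Rankine–Hugoniot system \eqref{eq502} on $S^\psi$ in a way compatible with the decomposition in Theorem \ref{thm21}. Using the conservation of mass flux and energy flux across $S^\psi$ together with the constancy of $E^-$ along supersonic trajectories, one extracts algebraic expressions for $E^+$, $A(s^+)$, and the tangential components of $u^+$ on $S^\psi$ in terms of $U^-|_{S^\psi}$, $\psi$, and $p^+|_{S^\psi}$; these serve as the Cauchy data on $S^\psi$ for the transport equations \eqref{eq212ber}, \eqref{eq212}, \eqref{eq213}. Recovering the tangential velocity components from the jump of the tangential momentum requires a div–curl system on $S^2$, precisely \eqref{eq5100}. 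The residual relation from the R-H conditions plays a dual role: after linearization it produces a Venttsel-type boundary condition on $S^\psi$ for the second-order equation \eqref{eq210} of $p^+$ (taking the place of \eqref{eq215} in Theorem \ref{thm21}), while simultaneously giving an equation that determines $\psi$ from $p^+|_{S^\psi}$ and the perturbed upstream data. The entropy condition \eqref{eq503} is automatic for small perturbations because $[[p_b^+]]>0$ on $\{r=r_b\}$.

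With this decomposition I would define Problems (T1)–(T4) as progressive equivalent reformulations and then mimic the iteration scheme of Section \ref{sec43}. Concretely, for a trial $(\psi,U^+)$ in a ball $X_{K\varepsilon}$ (with appropriate Hölder norms: $C^{4,\alpha}(S^2)$ for $\psi$, $\|\cdot\|_3$ for $U^+$, and $C^{3,\alpha}(S^2)$ for $U^+|_{S^\psi}$), first straighten $S^\psi$ to $\{x^0=r_b\}$ by a diffeomorphism, then solve in order: (i) the transport equations for $\tilde E^+$, $\widetilde{A(s)}^+$, $(\tilde u^+)^\alpha$ on $\mathcal M_\psi^+$ with R-H data on $S^\psi$; (ii) the linear mixed boundary value problem for $\tilde p^+$ consisting of the linearization of \eqref{eq438} in $\mathcal M_\psi^+$, Dirichlet data $p_1$ on $M_1$, and the Venttsel-type condition on $S^\psi$; (iii) update the shock by solving the scalar equation on $S^2$ for $\tilde\psi$ from the remaining R-H component, using $\tilde p^+|_{S^\psi}$. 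Estimates \eqref{eq456}, \eqref{eq463}, \eqref{eq467add} will have direct analogues; the S-condition is invoked precisely to ensure solvability of step (ii) (i.e., the linearized Venttsel problem \eqref{eq564} has no nontrivial kernel). Contraction in the lower-order norm $\|\cdot\|_2$ plus $C^{3,\alpha}(S^2)$ for $\psi$ then follows by the same routine Schauder estimates as in Section \ref{sec43}, and Banach's fixed point theorem yields existence and the uniqueness statement \eqref{th105}–\eqref{th107}.

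The main obstacle is step (ii): the Venttsel-type boundary condition inherited from R-H conditions couples the normal derivative of $p^+$ on $S^\psi$ to $\psi$ itself—equivalently, via the shock equation from step (iii), to a nonlocal operator acting on $p^+|_{S^\psi}$—so the linearization produces the nonclassical nonlocal elliptic problem \eqref{eq564}. Solvability and Schauder-type a priori estimates for this problem (to be obtained by separation of variables in spherical harmonics on $S^2$, so that the nonlocal operator diagonalizes and the S-condition translates into a spectral gap excluding the zero mode and resonances) constitute the technical heart of the argument; they refine the rough condition \eqref{eq41} of Theorem \ref{thm41} and are precisely what makes the subsonic-region step well-posed. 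Once this linear theory is secured, together with the div–curl system \eqref{eq5100} on $S^2$ giving the tangential velocities, the coupling via the R-H decomposition closes the fixed-point argument, and the carefully designed decomposition—as in Section \ref{sec2}—avoids any loss of derivatives, so the iteration runs in a single function space without a Nash–Moser scheme.
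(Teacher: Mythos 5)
Your proposal correctly identifies the architectural skeleton of the paper's proof: solve the supersonic flow first by symmetric hyperbolicity, decompose the Rankine--Hugoniot conditions compatibly with Theorem \ref{thm21}, obtain a div--curl system on $S^2$ for the tangential velocity, derive a Venttsel-type boundary condition on the shock-front, exploit the S-Condition via spherical harmonics to solve the nonlocal elliptic problem \eqref{eq564}, and close with a Banach fixed point iteration through progressive reformulations (T1)--(T4). There are, however, two places where your description, taken literally, would run into trouble. First, your proposed iteration order --- transport equations for $E$, $A(s)$, and $u'$ first, then pressure, then shock --- is inconsistent with how the contraction is actually obtained. The Cauchy data on $S^\psi$ for $A(s)$ (cf.\ \eqref{eq545}) and for $u'_0$ (cf.\ \eqref{eq5100}) depend on the \emph{updated} pressure through coefficients $\mu_4/\mu_2$, $\mu_5$, $e_4(y^0)$ that are $O(1)$ in the background, not $O(\varepsilon)$; if you instead feed in the \emph{old} iterate's pressure, these $O(1)$ couplings propagate into the pressure equation without any guaranteed smallness, and the fixed-point map need not contract. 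The paper avoids this by solving pressure \emph{before} entropy and tangential velocity, and by substituting the entropy's shock-front dependence on $i^*\hat{p}$ directly into the pressure equation, which is precisely what produces the nonlocal term $e_4(y^0)i^*(\hat p)$ in the interior operator $\mathfrak L$ of \eqref{eq564}. Second, you attribute the nonlocality of \eqref{eq564} to the Venttsel boundary condition's coupling to the shock equation. In fact the Venttsel boundary condition on $\Omega_0$ in \eqref{eq564} is local (the shock equation \eqref{eq525} has already been eliminated into it); the nonlocality sits in the interior through $e_4(y^0)i^*(\hat p)$, and originates from the entropy being transported from its value on the shock, itself tied to $i^*(\hat p)$ by \eqref{eq516add}. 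Once the iteration ordering is corrected and the nonlocal interior term is placed as in \eqref{eq557}--\eqref{eq559}, your argument matches the paper's Sections 5.2--5.8.
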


\subsubsection{Existence of supersonic flow}\label{sec513}
The existence and uniqueness of supersonic
flow $U^-$ in $\mathcal{M}=(r_0,r_1)\times S^2$ subjected to the initial data
$U_0^-$ satisfying \eqref{th101} follow from the
theory of  classical solutions of the Cauchy problem of
quasi-linear symmetric hyperbolic systems if $\varepsilon_0$ is
sufficiently small and $|r_1-r_0|$ is not large ({\it cf.} \cite{BS2007}). Furthermore, one can obtain that
\begin{eqnarray}\label{55015}
\norm{U^--U^-_b}_{C^{4}(\overline{\mathcal{M}})}\le C_1\varepsilon,
\end{eqnarray}
where $C_1>0$ and $\varepsilon_0>0$  depend solely on $U_b^-(r_0)$
and $r_1-r_0$. This implies \eqref{th103}. So Problem (T) is actually a one-phase free boundary problem, for which the free boundary ({\it i.e.} the shock-front) $S^\psi$ and the subsonic flow $U^+$ are to be solved. For simplicity, from now on we write $U^+$ as $U$.

\subsection{Reformulation of R-H conditions}
For transonic shock problem, the R-H conditions represent a nontrivial nonlinear structure of a discontinuous flow field. In this subsection we  decompose  them to find their roles in determination of  shock-front and subsonic flows.  For the sake of completeness, we  shall take  some computations  from \cite[pp.2520-2523]{CY2013}.

\subsubsection{Decomposition of R-H conditions}
Consulting \eqref{eq502add}, we see the normal 1-form on the shock-front \eqref{eq501} is given by $\bar{n}=\dd\psi-\dd x^0,$
and the R-H conditions \eqref{eq502} could be written in terms of differential forms as
\begin{eqnarray}
\left[\left[\bar{n}(u)\rho\bar{u}+p\bar{n}\right]\right]=0,&\label{eq51}\\
\left[\left[\bar{n}(u)\rho\right]\right]=0,&\label{eq52}\\
\left[\left[\bar{n}(u)\rho E\right]\right]=0.&\label{eq53}
\end{eqnarray}
Here $\bar{n}(u)$ is the action of the 1-form $\bar{n}$ on the vector field $u$.

By the definition of shock-front, the mass flux $m\triangleq-\bar{n}(u)\rho|_{S^\psi}=-\bar{n}(u^-)\rho^-|_{S^\psi}\ne0$. So from \eqref{eq52} and \eqref{eq53} we infer that $E^-|_{S^\psi}=E^+|_{S^\psi}$. Combining this with \eqref{eq21}, one concludes that the Bernoulli constant $E$  is invariant along flow trajectories, even across a shock-front; so it is actually determined by the upcoming supersonic flow $U_0^-$.

Set
$u=u_0+u'\triangleq u^0\,\p_0+u^\alpha\,\p_\alpha,\ \
\bar{u}=\bar{u}^0+\bar{u}'\triangleq u^0G_{00}\, \dd x^0+u^\alpha
G_{\alpha\beta}\, \dd x^\beta.$
Then $\bar{n}(u)=\dd\psi(u')-u^0$, and
$m=\rho u^0-\dd\psi(\rho u').$
So \eqref{eq52} and \eqref{eq53} may be written as
\begin{eqnarray}
[[m]]=0,\quad [[E]]=0,\label{eq54}
\end{eqnarray}
while  \eqref{eq51} is decomposed to be
\begin{eqnarray}
\left.[[mu^0+p]]\right.&=&0,\label{eq55}\\
\left.[[p\,\dd\psi-m\bar{u}']]\right.&=&0.\label{eq56}
\end{eqnarray}

If $[[p]]>0$ (which is guaranteed by the physical entropy condition satisfied by the background solution, and the small perturbation estimate \eqref{th104} to be established),  from \eqref{eq56} we  solve that
\begin{eqnarray}\label{eq57}
\dd\psi=\omega\triangleq \left.\frac{m[[\bar{u}']]}{[[p]]}\right|_{S^\psi}
=\mu_0\psi^*(\bar{u}')+g_0,
\end{eqnarray}
with $\mu_0$ a positive constant, and $g_0$ a $1$-form on $S^2$:
\begin{eqnarray}
&&\mu_0=\left.\frac{(\rho u^0)_b}{p_b^+-p_b^-}\right|_{r_b}>0,\label{eq58}\\
&&g_0=g_0(U,U^-, D\psi)\triangleq\left.\frac{m[[\bar{u}']]}{[[p]]}\right|_{S^\psi}-
\mu_0\psi^*(\bar{u}').\label{eq59}
\end{eqnarray}
Here we use the pull back $\psi^*$, so $\psi^* (\bar{u}')=\bar{u}'|_{S^\psi}$ is a 1-form on $S^2$. We note that $g_0$ is a higher-order term (see Definition \ref{def501} below), which depends on $U|_{S^\psi}, U^-|_{S^\psi}$ and $D\psi$.

The R-H conditions \eqref{eq51}-\eqref{eq53} are equivalent to \eqref{eq54}--\eqref{eq55} and \eqref{eq57}, if $[[p]]\ne0$.

\begin{remark}\label{rem51}
By \eqref{eq57}, it is necessary that $\dd\omega=0$, which implies
\begin{eqnarray}\label{eq510}
\dd(\psi^*(\bar{u}'))=-\frac{1}{\mu_0}\dd g_0.
\end{eqnarray}
On the contrary, since the first Betti number of $S^2$ is zero, by Poincar\'{e} lemma, \eqref{eq510} is also sufficient for the existence of a function $\psi^p$ on $S^2$ so that \eqref{eq57} holds, and $\int_{S^2} \psi^p\mathrm{vol}=0$ (here $\mathrm{vol}$ is the  volume 2-form on $S^2$).
Such a function $\psi^p$ is called the {\it profile} of the shock-front $\psi$. We also call the number 
$r^p\triangleq\psi-\psi^p$
as the {\it position} of the shock-front.
\end{remark}

\subsubsection{Linearization of R-H conditions}\label{sec522}

We recall the following definition from \cite[p.2522]{CY2013}.
\begin{definition}\label{def501}
Set $\hat{U}=U^+-U_b^+$ and $\hat{r}=r^p-r_b$. A {\it higher order term} is an expression containing either

(i) $U^--U_b^-$ and its first or second order derivatives;

\noindent
or

(ii) the products of $\psi^p,\ \hat{r}, \hat{U}$, and their
derivatives $D\hat{U}, D^2\hat{U}, D\psi,D^2\psi$, and $D^3\psi$,
where $D^ku$ are the $k$-{th} order derivatives of $u$.
\end{definition}

As shown in \cite[p.2523]{CY2013}, we can write \eqref{eq54}\eqref{eq55} equivalently as
\begin{eqnarray}
\psi^*(\widehat{u^0})=\mu_1\,(\psi^p+r^p-r_b)+g_1(U,U^-,\psi,D\psi),&&
\label{eq512}\\
\psi^*(\hat{p})=\mu_2\,(\psi^p+r^p-r_b)+g_2(U,U^-,\psi,D\psi),&&
\label{eq513}\\
\psi^*(\hat{\rho})=\mu_3\,(\psi^p+r^p-r_b)+g_3(U,U^-,\psi,D\psi).&&
\label{eq514}
\end{eqnarray}
Using $A(s)=p\rho^{-\gamma}$, it follows that
\begin{eqnarray}
\psi^*(\widehat{A(s)})=\mu_4\,(\psi^p+r^p-r_b)+g_4(U,U^-,\psi,D\psi).\label{eq515}
\end{eqnarray}
Here $\mu_j$ ($j=1, 2, 3, 4$) are  constants 
and $g_k$ ($k=1,2,3,4$) are higher order terms.
From \eqref{eq513} and \eqref{eq515} we also have
\begin{eqnarray}\label{eq516add}
\psi^*(\widehat{A(s)})=\frac{\mu_4}{\mu_2}\psi^*(\hat{p})+g_4-\frac{\mu_4}{\mu_2}g_2.
\end{eqnarray}

We then obtain the following lemma.
\begin{lemma}\label{lem51}
Suppose that $\psi$ and $U=U^+$ are $C^1$, and the physical entropy condition holds. Then the R-H conditions \eqref{eq51}--\eqref{eq53} are equivalent to  $[[E]]|_{S_\psi}=0$, \eqref{eq513}, \eqref{eq515},  and  \eqref{eq57}.
\end{lemma}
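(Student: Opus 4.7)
The plan is to prove the equivalence in both directions, leveraging the fact from \cite[p.2523]{CY2013} (recalled just before the lemma) that \eqref{eq54}--\eqref{eq55} are equivalent to the triple \eqref{eq512}--\eqref{eq514}, together with the fact that \eqref{eq515} is interchangeable with \eqref{eq514} (given \eqref{eq513}) via the constitutive relation $A(s)=p\rho^{-\gamma}$. For necessity, assume the R-H conditions \eqref{eq51}--\eqref{eq53} hold. Dividing the energy R-H \eqref{eq53} by the mass R-H \eqref{eq52}, using the non-vanishing mass flux $m\ne 0$ from the definition of shock, yields $[[E]]|_{S^\psi}=0$. The tangential piece of the momentum R-H is \eqref{eq56}, and the physical entropy hypothesis $[[p]]>0$ lets us solve for $\dd\psi$ to obtain \eqref{eq57}. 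Finally, \eqref{eq513} and \eqref{eq515} follow from the cited linearization of \eqref{eq54}--\eqref{eq55} about the background, together with the constitutive relation.

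For sufficiency, assume $[[E]]|_{S^\psi}=0$, \eqref{eq513}, \eqref{eq515}, and \eqref{eq57}. The strategy is to reconstruct \eqref{eq514} and \eqref{eq512}, so that the \cite{CY2013} equivalence can be run in reverse. First, linearizing $\rho=(p/A(s))^{1/\gamma}$ around $(p_b^+,A(s_b^+))$ and substituting the expressions from \eqref{eq513} and \eqref{eq515} for $\psi^*(\hat p),\psi^*(\widehat{A(s)})$ produces
\[
\psi^*(\hat\rho)=\mu_3(\psi^p+r^p-r_b)+g_3,
\]
with $\mu_3$ determined by $\mu_2,\mu_4$ and the background, and $g_3$ a higher-order remainder; this is \eqref{eq514}. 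Next, I expand $E=\tfrac12(u^0)^2+\tfrac12 G_{\alpha\beta}u^\alpha u^\beta+c^2/(\gamma-1)$ on each side of $S^\psi$ and take the jump. Since the background satisfies $u_b'=0$, the tangential kinetic jump $[[G_{\alpha\beta}u^\alpha u^\beta]]$ is a higher-order quantity determined by $u'^-|_{S^\psi}$ (known from the upstream Cauchy data) and by $\psi^*(\bar u'^+)$ (recovered from $\dd\psi$ via \eqref{eq57}); meanwhile $[[c^2]]=\gamma[[p/\rho]]$ is determined by $[[p]],[[\rho]]$ through \eqref{eq513} and the just-derived \eqref{eq514}. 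The condition $[[E]]=0$ thus reduces to a single scalar equation for $[[u^0]]$ with leading coefficient $\tfrac12(u_b^{+0}+u_b^{-0})\ne 0$, whose unique solution is precisely \eqref{eq512}. With \eqref{eq512}--\eqref{eq515} now in force, the \cite{CY2013} equivalence delivers \eqref{eq54}--\eqref{eq55}; in particular $[[m]]=0$, which allows \eqref{eq57} to be rewritten as $[[p\,\dd\psi - m\bar u']]=0$, i.e., \eqref{eq56}. Assembling these gives the full R-H system \eqref{eq51}--\eqref{eq53}.

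The main obstacle I anticipate is not a single estimate but the structural bookkeeping of verifying that the $\mu_1, g_1$ produced by the sufficiency reconstruction above agree with the $\mu_1, g_1$ produced on the necessity side by linearizing \eqref{eq54}--\eqref{eq55}. This agreement is essentially forced by the fact that the full R-H system, viewed as a map from $(U^-|_{S^\psi},\psi)$ to $U^+|_{S^\psi}$, is a local diffeomorphism at the background solution --- the physical entropy condition $[[p]]>0$, the non-vanishing mass flux $m\ne 0$, and the subsonic selection for $U^+|_{S^\psi}$ together provide the needed non-degeneracy. Making this matching fully explicit, together with tracking that every remainder produced by linearization genuinely satisfies Definition \ref{def501}, is the part requiring careful execution.
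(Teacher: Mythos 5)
Your proposal is correct and reproduces the paper's (implicit) argument: the paper establishes the chain R-H $\Leftrightarrow$ \eqref{eq54}--\eqref{eq55}, \eqref{eq57} $\Leftrightarrow$ \eqref{eq512}--\eqref{eq514}, \eqref{eq57} in the text preceding the lemma and invokes the same equivalence of \eqref{eq514} with \eqref{eq515} via $A(s)=p\rho^{-\gamma}$, leaving the replacement of \eqref{eq512} by $[[E]]=0$ implicit, which is exactly the Bernoulli-law substitution you spell out. Your added observation that this last interchange is legitimate because $[[E]]=0$ together with the known $p^+,\rho^+,u'^+$ determines $u^0$ uniquely (positive root, coefficient $\tfrac12(u_b^{0+}+u_b^{0-})\neq0$), and hence must reproduce the formula \eqref{eq512} already derived from the full system, is a correct and useful clarification of a step the paper glosses over.
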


\subsection{Problem (T1)}

Now we  consider the boundary condition \eqref{eq215} with $M_1$ there replaced by  the shock-front $S^\psi$ and then formulate Problem (T1), which is easily seen to be equivalent to Problem (T).

\subsubsection{Divergence of tangential velocity field on shock-front}
By the process of deriving \eqref{eq48} and \eqref{eq416}, we infer the following identity
\begin{eqnarray}\label{eq532add}
\frac{\rho u^0}{\left(\frac{u^0}{c}\right)^2-1}\left(\frac{D_up}{\gamma p}+\di u+L_3(D_uu+\frac{\grad\,p}{\rho})\right)
=\p_0\hat{p}+\gamma_1\hat{p}-G(U),
\end{eqnarray}
which holds actually at any point in $\overline{\mathcal{M}}$. Now we restrict it on $S^\psi$. So particularly $x^0$ 
should be replaced by $\psi$. Using the commutator relation
\begin{eqnarray*}
\p_\alpha(\psi^*f)=(\p_\alpha\psi)\psi^*(\p_0f)+\psi^*(\p_\alpha f), \quad \alpha=1,2,
\end{eqnarray*}
we have
\begin{eqnarray}
\psi^*G_1
&=&\psi^*\left(\frac{-\rho u^0}{\frac{(u^0)^2}{c^2}-1}\right)
\left(\frac{1}{\sqrt{g}}\p_\alpha(\sqrt{g}\psi^*u^\alpha)-(\p_\alpha\psi) \psi^*(\p_0u^\alpha)\right)\nonumber\\
&=&\psi^*\left(\frac{-\rho u^0}{\frac{(u^0)^2}{c^2}-1}\right)\left(
-\dd^*(\psi^*\bar{u}')-\p_\alpha\psi \psi^*(\p_0u^\alpha)\right).\label{eq534}
\end{eqnarray}
Here $\dd^*$ is the co-differential operator on $S^2$, and the last equality holds because $\dd^*\bar{u}=-\di\, u$ for a vector field $u$.

From \eqref{eq45add33}, we have
\begin{eqnarray*}
&&\psi^*\{G(D_uu+(\grad\, p)/\rho, G^{\alpha\beta}\p_\beta\psi\p_\alpha)\}=\left.\{G_{k\alpha}(u^j\p_ju^k+u^ju^m\Gamma^{k}_{jm}
+\frac{1}{\rho}\p_ip G^{ik})G^{\alpha\beta}\p_\beta\psi\}\right|_{S^\psi}\\
&=&\psi^*\left\{(u^0-u^\alpha\p_\alpha\psi)\p_\beta\psi \p_0u^\beta+u^ju^m\Gamma^\beta_{jm}\p_\beta\psi+\frac{1}{\rho}\sum_{\beta=1}^2 G^{\beta\beta}\p_\beta p\p_\beta\psi\right\}+\psi^*(u^\alpha) \p_\beta\psi\p_\alpha(\psi^*u^\beta).
\end{eqnarray*}
One solves $\psi^*(\p_\beta\psi \p_0u^\beta)$, and  \eqref{eq534} becomes
\begin{eqnarray*}
\psi^*G_1&=&\psi^*\left(\frac{\rho u^0}{\frac{(u^0)^2}{c^2}-1}\right)
\dd^*(\psi^*\bar{u}')+\psi^*\left(\frac{\rho u^0}{\frac{(u^0)^2}{c^2}-1}\times\frac{1}{u^0-u^\alpha\p_\alpha\psi}\right)\nonumber\\
&&\quad\times\psi^*\Big\{
G(D_uu+(\grad\, p)/\rho, G^{\alpha\beta}\p_\beta\psi\p_\alpha)-
u^ju^m\Gamma^{\beta}_{jm}\p_\beta\psi\nonumber\\
&&\qquad\qquad\qquad-\frac{1}{\rho}\sum_{\beta=1}^2G^{\beta\beta}\p_\beta p\p_\beta\psi-u^\alpha\p_\beta\psi\p_\alpha(\psi^*u^\beta)\Big\}.
\end{eqnarray*}

Similarly, one my replace the normal derivatives by tangential derivatives to compute $\psi^*G_2$ and $\psi^*G_3$.
Now set
\begin{eqnarray*}
&&L_1(w)\triangleq-\frac{1}{(u^0)^2}\frac{u^\delta\p_\delta\psi}{u^0-u^\delta\p_\delta\psi}\psi^*w,\\
&&L_2(w)\triangleq\frac{\rho^{\gamma-1}}{(\gamma-1)(u^0)^2}\frac{u^\delta\p_\delta\psi}{u^0-u^\delta\p_\delta\psi}\psi^*w,\\
&&L'_3(w)\triangleq L_3(w)+\frac{1}{u^0-u^\delta\p_\delta\psi}G(w, G^{\alpha\beta}\p_\beta\psi\p_\alpha)+\frac{\psi^2}{(u^0)^2}\frac{u^\delta\p_\delta\psi}{u^0-u^\delta\p_\delta\psi}
G(w, G^{\delta\alpha}g_{\alpha\beta}u^\beta\p_\delta).
\end{eqnarray*}
Then \eqref{eq532add} becomes
\begin{eqnarray*}
&&\psi^*\left\{\frac{\rho u^0}{\left(\frac{u^0}{c}\right)^2-1}\left(\frac{D_up}{\gamma p}+\di\,u+L^1(D_uE)+L^2(D_uA(s))+L'_3(D_uu+\frac{\grad\, p}{\rho})\right)\right\}\nonumber\\
&=&\psi^*\left\{\p_0(\hat{p})+\gamma_2\hat{p}-\gamma_3\dd^*(\psi^*\bar{u}')\right\}+\tilde{G}.
\end{eqnarray*}
Here $\gamma_2>0$ and $\gamma_3<0$ are constants determined by the background solution, and
\begin{eqnarray}\label{eq543add3}
\tilde{G}&\triangleq&\psi^*\left(-\frac{\rho u^0}{\left(\frac{u^0}{c}\right)^2-1}+\gamma_3\right)\dd^*(\psi^*\bar{u}')
+\left(\frac{2}{\psi}\psi^*\left(\frac{\gamma t^2-t+2}{(1-t)^2}\right)-\frac{2}{r_b}\left.\frac{\gamma t^2-t+2}{(1-t)^2}\right|_{x^0=r_b}\right)\psi^*\hat{p}\nonumber\\
&&+\psi^*\left\{\frac{\rho u^0}{\left(\frac{u^0}{c}\right)^2-1}\frac{1}{u^0-u^\alpha\p_\alpha\psi}
\left(u^ju^m\Gamma^\beta_{jm}\p_\beta\psi+\frac{1}{\rho}\sum_{\beta=1}^2G^{\beta\beta}\p_\beta p\p_\beta\psi+u^\alpha\p_\alpha(\psi^*u^\beta)\p_\beta\psi\right)\right\}\nonumber\\
&&+\psi^*\left\{\frac{1}{\left(\frac{u^0}{c}\right)^2-1}\frac{1}{u^0-u^\alpha\p_\alpha\psi}
\left(\frac{\rho^\gamma}{\gamma-1}u^\alpha\p_\alpha(\psi^*A(s))-\rho u^\alpha\p_\alpha(\psi^*E)\right)\right\}\nonumber
\end{eqnarray}
\begin{eqnarray}
&&+\psi^*\left\{\frac{1}{2u^0}\frac{\rho\psi^2 }{\left(\frac{u^0}{c}\right)^2-1}
\left(u^\delta\p_\delta g_{\alpha\beta}u^\alpha u^\beta+\frac{2u^0g_{\alpha\beta}u^\beta u^\delta\p_\delta(\psi^*u^\alpha)}{u^0-u^\alpha\p_\alpha\psi}\right)\right\}\nonumber\\ &&-\psi^*\left\{\frac{1}{\left(\frac{u^0}{c}\right)^2-1}
\left(\rho u^\alpha u^\beta\Gamma^0_{\alpha\beta}-u^0\left(\frac{1}{c^2}+\frac{1}{(u^0)^2}\right)u^\delta\p_\delta p\right)\right\}\nonumber\\
&&+\psi^*\left\{\frac{1}{u^0}\frac{\rho \psi^2}{\left(\frac{u^0}{c}\right)^2-1}\frac{u^\delta\p_\delta\psi}{u^0-u^\alpha\p_\alpha\psi}
\left(g_{\alpha\beta}u^\beta u^j u^m\Gamma^{\alpha}_{jm}+\frac{1}{\rho\psi^2}u^\alpha\p_\alpha p\right)\right\}\nonumber\\
&&+O(1)\psi^*(|\hat{U}|^2+|\hat{\psi}|^2+\hat{E})+\frac{2}{\psi}
\psi^*\left\{\frac{t+\frac{2}{\gamma-1}}{(1-t)^2}\rho_b^\gamma \left(g_4-\frac{\mu_4}{\mu_2}g_2\right)\right\}.
\end{eqnarray}
We see \eqref{eq215} is equivalent to
\begin{eqnarray}\label{eq516add2}
\dd^*(\psi^*\bar{u}')=\mu_5\,
\psi^*(\p_0\hat{p})+\mu_6\,
\psi^p+\mu_6\,(r^p-r_b)
+g_5(U,U^-,\psi,DU,D\psi),
\end{eqnarray}
if we replace $\psi^*(\hat{p})$ by ${\psi}$ via \eqref{eq513}. Here
$\mu_5<0,  \mu_6>0$
are constants determined by the background solution, and
\begin{eqnarray}\label{eqbarg5}
g_5\triangleq\frac{1}{\gamma_3}\tilde{G}+\frac{\gamma_2}{\gamma_3}g_2.
\end{eqnarray}
\begin{remark}\label{rem52}
In the expression of $g_5$, there appear first order derivatives of $\hat{p}$, and  only first order tangential derivatives of $A(s), u', E, \psi$ on $S^\psi$.
Note that \eqref{eq516add2} is a first-order boundary condition on the shock-front. Together with \eqref{eq510}, we have a div-curl system of  the tangential velocity $\psi^*\bar{u}'$ on $S^2$.
\end{remark}

\subsubsection{Problem (T1)}
For functions $U=(E, A(s), p, u'=u^1\p_1+u^2\p_2)$ and $\psi$ (note that $\hat{U}=U-U_b^+$ and $\psi=\psi^p+r^p$), we formulate the following problems:
\begin{eqnarray}
&&\begin{cases}\label{eq520}
D_uE=0&\text{in}\quad \mathcal{M}^+_\psi,\\
E=E^-&\text{on}\quad S^\psi;
\end{cases}\\
&&\begin{cases}\label{eq521}
L(\hat{p})=F(U, DU, D^2p)& \text{in}\quad \mathcal{M}^+_\psi,\\
\hat{p}=p_1-p_b^+ &\text{on}\quad M_1,\\
\psi^*(\hat{p})=\mu_2\,(\psi^p+r^p-r_b)+g_2(U,U^-,\psi,D\psi);
\end{cases}\\
&&\begin{cases}\label{eq522}
D_uA(s)=0&\text{in}\quad \mathcal{M}^+_\psi,\\
\psi^*(\widehat{A(S)})=\mu_4\,(\psi^p+r^p-r_b)+g_4(U,U^-,\psi,D\psi);\label{eq515add1}
\end{cases}\\
&&\begin{cases}\label{eq523}
\dd\psi=\mu_0\psi^*(\bar{u}')+g_0(U,U^-,D\psi),\\
\dd^*(\psi^*\bar{u}')=\mu_5\,
\psi^*(\p_0\hat{p})+\mu_6\,
\psi^p+\mu_6\,(r^p-r_b)+g_5(U,U^-,\psi,DU,D\psi)
\end{cases}\text{on}\ \ S^2;\\
&&\begin{cases}\label{eq524}
G(D_uu+\frac{\grad p}{\rho}, \p_\alpha)=0&\text{in}\quad \mathcal{M}^+_\psi,\quad \alpha=1,2,\\
u'|_{S^\psi}=u'_0.
\end{cases}
\end{eqnarray}
The initial data $u'_0$ in \eqref{eq524} is  the vector field on $S^2$ associated with the 1-form $\psi^*\bar{u}'$ in \eqref{eq523}.

By Theorem \ref{thm21}, Lemma \ref{lem51} and the analysis above,
it is obvious that for given supersonic flow $U^-$, the solution $(U, \psi)$ to these problems also solves the Euler system and the R-H conditions hold across $S^\psi.$ Hence  we could rewrite Problem (T) equivalently as the following Problem (T1).

\medskip
\fbox{
\parbox{0.90\textwidth}{
Problem (T1): Find $\psi$ and $U=U^+$ in $\mathcal{M}_\psi^+$ satisfying \eqref{th102}, \eqref{th104} and solving the problems \eqref{eq520}--\eqref{eq524}.}}
\medskip

\subsection{Problem (T2)}

Acting $\dd^*$ to the first equation in \eqref{eq523} and using the second equation, we derive that
\begin{eqnarray}\label{eq525}
-\Delta'\psi^p+\mu_7\psi^p=\mu_0\mu_6(r^p-r_b)+\mu_0\mu_5\,\psi^*\p_0\hat{p}
+g_6(U, U^-, \psi, DU^-, DU, D\psi, D^2\psi),
\end{eqnarray}
with $g_6=\mu_0g_5+\dd^*g_0$ and $\mu_7=-\mu_0\mu_6<0$.
Here $\Delta'$ is the standard Laplace operator on $S^2$.
Then using the third equation in \eqref{eq521}, we get
\begin{multline}\label{eq526}
-\Delta'(\psi^*\hat{p})+\mu_7(\psi^*{\hat{p}})+\mu_9(\psi^*\p_0\hat{p})
=g_8(U, U^-, \psi, DU, DU^-, D\psi, D^2U, D^2U^-, D^2\psi, D^3\psi),
\end{multline}
where $\mu_9=-\mu_0\mu_2\mu_5<0$ and
$g_8=-\Delta'g_2+\mu_7g_2+\mu_0\mu_2g_5+\mu_2\dd^*g_0.$
We remind here again that $DU$ and $D^2U$ represent first order and second order  derivatives of $\psi^*u, \psi^*p, \psi^*E, \psi^*A(s)$ on $S^2$.

By the second equation in \eqref{eq523}, using the divergence theorem, and recall that $\int_{S^2}\psi^p\vol=0,$ we have
\begin{eqnarray}\label{eq527}
r^p-r_b=-\frac{1}{4\pi\mu_6}\int_{{S}^2}\Big(\mu_5\,
\psi^*(\p_0\hat{p})+g_5(U,U^-,\psi,DU,D\psi)\Big)\, \vol.
\end{eqnarray}
Substituting this into the third equation in \eqref{eq521}, we then obtain
\begin{eqnarray}\label{eq528}
\psi^p=\frac{1}{\mu_2}\left(\psi^*\hat{p}-{\mu_8}
\int_{{S}^2}\psi^*(\p_0\hat{p})\,\vol+{g_7(U,U^-,\psi,D\psi)}\right),
\label{177}
\end{eqnarray}
with $\mu_8=-\frac{\mu_2\mu_5}{4\pi\mu_6}<0$ and
$g_7=\frac{\mu_2}{4\pi\mu_6}\int_{{S}^2}g_5\,\vol-g_2.$

We now formulate the following Problem (T2).

\medskip
\fbox{
\parbox{0.90\textwidth}{
Problem (T2): Find $\psi$ and $U=\hat{U}+U_b^+$ that solve  \eqref{eq520}, \eqref{eq530}, \eqref{eq531}, \eqref{eq515add1}, \eqref{eq533} and \eqref{eq524}.}}

\begin{eqnarray}
&&\begin{cases}\label{eq530}
L(\hat{p})=F(U, DU, D^2p)& \text{in}\quad \mathcal{M}_\psi,\\
\hat{p}=p_1-p_b^+ &\text{on}\quad M_1,\\
-\Delta'(\psi^*\hat{p})+\mu_7(\psi^*{\hat{p}})+\mu_9(\psi^*\p_0\hat{p})\\
\qquad=g_8(U, U^-, \psi, DU, DU^-, D\psi, D^2U, D^2U^-, D^2\psi, D^3\psi);
\end{cases}\\
&&\begin{cases}\label{eq531}
r^p-r_b=-\frac{1}{4\pi\mu_6}\int_{{S}^2}\Big(\mu_5\,
\psi^*(\p_0\hat{p})+g_5(U,U^-,\psi,DU,D\psi)\Big)\, \vol,\\
\psi^p=\frac{1}{\mu_2}\left(\psi^*\hat{p}-{\mu_8}
\int_{{S}^2}\psi^*(\p_0\hat{p})\,\vol+{g_7(U,U^-,\psi,D\psi)}\right),\\
\psi=\psi^p+r^p;
\end{cases}\\
&&\begin{cases}\label{eq533}
\dd(\psi^*\bar{u}')=-\frac{1}{\mu_0}\dd g_0(U, U^-, D\psi),\\
\dd^*(\psi^*\bar{u}')=\mu_5\,
\psi^*(\p_0\hat{p})+\mu_6\,
\psi^p+\mu_6\,(r^p-r_b)+g_5(U,U^-,\psi,DU,D\psi)
\end{cases} \text{on}\ \ S^2.
\end{eqnarray}

\begin{lemma}\label{lem52}
Problem (T2) is equivalent to Problem (T1).
\end{lemma}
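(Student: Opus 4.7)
The plan is to verify the equivalence by checking each implication separately. The direction (T1) $\Rightarrow$ (T2) is essentially a bookkeeping exercise that retraces the algebraic manipulations carried out just before the statement. Given a solution to (T1), I would (i) apply $\dd$ to the first equation of \eqref{eq523} and use $\dd^2=0$ to obtain the first line of \eqref{eq533}; (ii) apply $\dd^*$ to the same equation, substitute the second equation of \eqref{eq523}, and use the boundary relation $\psi^*(\hat p)=\mu_2(\psi^p+r^p-r_b)+g_2$ in \eqref{eq521} to eliminate $\psi^p+r^p-r_b$, arriving at the third line of \eqref{eq530}; and (iii) integrate the second equation of \eqref{eq523} over $S^2$, using $\int_{S^2}\dd^*(\psi^*\bar u')\,\vol=0$ (Stokes on a closed manifold) and $\int_{S^2}\psi^p\,\vol=0$, to obtain the first formula of \eqref{eq531}; substitution into the boundary relation of \eqref{eq521} then yields the second formula.

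For (T2) $\Rightarrow$ (T1), the only content of (T1) not manifestly present in (T2) is the boundary condition $\psi^*(\hat p)=\mu_2(\psi^p+r^p-r_b)+g_2$ on $S^\psi$ and the first equation of \eqref{eq523}. The pressure boundary condition falls out of a short algebraic check: forming $\mu_2$ times the second formula of \eqref{eq531} plus $\mu_2$ times the first formula, the two integrals involving $\psi^*(\p_0\hat p)$ cancel by virtue of $\mu_8=-\mu_2\mu_5/(4\pi\mu_6)$, and the term $g_7-\frac{\mu_2}{4\pi\mu_6}\int_{S^2} g_5\,\vol$ collapses to $-g_2$, producing exactly $\mu_2(\psi^p+r^p-r_b)=\psi^*(\hat p)-g_2$. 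Once this boundary condition is in hand, reversing the substitution carried out in step (ii) above shows that the third line of \eqref{eq530} is equivalent to \eqref{eq525}.

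The remaining and most delicate step is to recover the vector identity $\dd\psi=\mu_0\psi^*(\bar u')+g_0$ from the strictly weaker compatibility $\dd(\psi^*\bar u')=-\mu_0^{-1}\dd g_0$ that appears in \eqref{eq533}. I introduce the defect $1$-form $\eta\triangleq\mu_0\psi^*(\bar u')+g_0-\dd\psi$ on $S^2$ and aim to show $\eta\equiv 0$. Combining the first equation of \eqref{eq533} with $\dd^2\psi=0$ gives $\dd\eta=0$; since $S^2$ is simply connected, the Poincar\'e lemma yields $h\in C^2(S^2)$ with $\eta=\dd h$. A direct computation of $\dd^*\eta$ in which I substitute the second equation of \eqref{eq533} for $\dd^*(\psi^*\bar u')$ and replace $\dd^*\dd\psi=-\Delta'\psi^p$ via the just-recovered identity \eqref{eq525}, after invoking $\mu_7=-\mu_0\mu_6$ and $g_6=\mu_0 g_5+\dd^*g_0$, yields $\dd^*\eta=0$. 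Therefore $-\Delta' h=\dd^*\dd h=0$, and since $S^2$ is closed the harmonic $h$ must be constant, whence $\eta=\dd h=0$.

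The main obstacle is precisely this last step: passing from (T1) to (T2) applies an exterior derivative and hence discards one constant of integration a priori, and recovering it rests crucially on the topological fact that $H^1(S^2)=0$ together with the algebraic compatibility between \eqref{eq525} and the codifferential of the second equation of \eqref{eq533}. Once $\eta\equiv 0$ is established, all the equations of (T1) are in place and the equivalence is complete.
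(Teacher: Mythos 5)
Your argument is correct and follows the same route as the paper: in the direction (T2)$\Rightarrow$(T1) you recover the boundary condition $\psi^*(\hat p)=\mu_2(\psi^p+r^p-r_b)+g_2$ from the algebraic cancellation built into $\mu_8$ and $g_7$, and you recover $\dd\psi=\mu_0\psi^*(\bar u')+g_0$ by showing the defect $1$-form $\eta$ is both closed and co-closed (the co-closedness computation via $\mu_7=-\mu_0\mu_6$ and $g_6=\mu_0 g_5+\dd^*g_0$ checks out) and hence exact with a harmonic, therefore constant, potential on $S^2$ — which is the paper's argument rephrased (the paper instead introduces a mean-zero potential $\phi$ for $\mu_0\psi^*(\bar u')+g_0$ and shows $\psi^p-\phi$ is constant). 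The one item the paper proves and you do not mention is $\int_{S^2}\psi^p\,\vol=0$, which is needed so that the $\psi^p,r^p$ reconstructed from \eqref{eq531} really are the profile and position of $\psi$ in the sense of Remark \ref{rem51}; this is immediate from the divergence theorem applied to the second equation of \eqref{eq533} together with the defining formula for $r^p-r_b$, so it is a small omission rather than a gap in the method.
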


\begin{proof}
We only need to prove $\int_{S^2}\psi^p\vol=0$, the third equation in \eqref{eq521}, and the first equation in \eqref{eq523}. Obviously the second equation in \eqref{eq533} and the definition of $r^p$ in \eqref{eq531} imply that $\int_{S^2}\psi^p\vol=0$. And the third equation in \eqref{eq521}, namely, \eqref{eq513}, follows directly from \eqref{eq531}.

Now from the third equation in \eqref{eq530}, namely \eqref{eq526}, and  \eqref{eq513}, we get \eqref{eq525}, which means that, with the help of the second equation in \eqref{eq533},
\begin{eqnarray}\label{eq535}
\dd^*\Big(\dd\psi^p-\mu_0\psi^*(\bar{u}')-g_0\Big)=0.
\end{eqnarray}
The first equation in \eqref{eq533} says that the 1-form $\mu_0\psi^*(\bar{u}')+g_0$ is closed. Since the first Betti number of $S^2$ vanishes, there is a function $\phi$ with $\int_{S^2}\phi\vol=0$ so that $\dd\phi=\mu_0\psi^*(\bar{u}')+g_0$. Furthermore, if $\phi'$ also satisfies $\dd\phi'=\mu_0\psi^*(\bar{u}')+g_0$, then $\dd(\phi-\phi')=0$, and  we infer that $\phi'=\phi+k$, with $k\in\mathbb{R}$. So from \eqref{eq535} we have
$-\Delta'(\psi^p-\phi)=\dd^*\dd (\psi^p-\phi)=0.$
By maximum principle, $\psi^p-\phi$ is a constant. Since both of them have mean zero, we conclude that $\psi^p=\phi$. This shows that $\psi^p$, hence $\psi$, also solves the first equation in \eqref{eq523}. 
\end{proof}

\subsection{Problem (T3)}

The above equations and boundary conditions are formulated in
$\mathcal{M}_\psi^+.$
Supposing $\psi\in C^{4,\alpha}(S^2)$, we
introduce a $C^{4,\alpha}$--homeomorphism $ \Psi:
(x^0,x')\in\mathcal{M}_\psi^+\mapsto ({y}^0,y')\in
\Omega\triangleq(r_b,r_1)\times {S}^2$ by
\begin{eqnarray}\label{259}
{y}^0=\frac{x^0-\psi(x')}{r_1-\psi(x')}(r_1-r_b)+r_b,\qquad y'=(y^1, y^2)=x'=(x^1, x^2)
\end{eqnarray}
to normalize $\mathcal{M}_\psi^+$ to $\Omega$. 
We set
$\Omega_{0}=\{r_b\}\times{S}^2$ and $\Omega_{1}=\{r_1\}\times{S}^2$. They are respectively the image of $S^\psi$ and $M_1$. Then
$\p\Omega=\Omega_0\cup\Omega_1$. We use $i$ to denote the
embedding of $\Omega_0$ in $\Omega$.

To avoid complication of notations, in the following we still write the unknowns in $y$-coordinates as $(u, p, \rho, s, E)$ etc.; namely, we write  $(\Psi^{-1})^*p$ as $p$, $(\Psi^{-1})^*E$ as $E$, etc. and write
\begin{eqnarray}\label{eq562add}
\Psi_*u=\frac{r_1-r_b}{r_1-\psi(y')}\left(u^0(\Psi^{-1}(y))+\frac{y^0-r_1}{r_1-r_b}u^\alpha\p_\alpha \psi(y')\right)\p_{y^0}+u^\alpha(\Psi^{-1}(y))\p_{y^\alpha}
\end{eqnarray}
still as $u$. (Note that $(\Psi_*u)^\alpha=u^\alpha$ for $\alpha=1,2$. So there is no confusion.) By this convention $\psi^*E$ becomes $i^*E$ in $y$-coordinates.
We have
\begin{eqnarray*}
\Delta'_x\hat{p}
&=&\Delta'_y\hat{p}+O(1)(D^2\hat{p}D\psi)+O(1)(D\hat{p}D^2\psi)+O(1)(D\hat{p}D\psi),\\
\psi^*\left({\frac{\p p}{\p{x^0}}}\right)&=&i^*\left(\frac{\p p}{\p{y^0}}\right)+O(1)(\psi-r_b)D\hat{p}.
\end{eqnarray*}
Hence Problem (T2) could be rewritten as the following Problem (T3), where we use $\bar{F}$ or $\bar{g}$ to denote the higher order terms in the $y$-coordinates, and $\p_j$ means $\frac{\p}{\p y_j}$.

\fbox{
\parbox{0.90\textwidth}{
Problem (T3): Find $\psi\in C^{4,\alpha}(S^2)$ and $U=U_b^++\hat{U}$ that solve the following problems \eqref{eq537}--\eqref{eq542}. The initial data $u_0'$ in \eqref{eq542}  is the vector field corresponding to the 1-form  $\bar{u}_0'$ on $S^2$ obtained from \eqref{eq541}.}}

\begin{eqnarray}
&&\begin{cases}\label{eq537}
D_u\hat{E}=0&\text{in}\quad \Omega,\\
\hat{E}=E^--E_b^-&\text{on}\quad \Omega_0;
\end{cases}
\end{eqnarray}
\begin{eqnarray}
&&\begin{cases}\label{eq538}
L(\hat{p})=\bar{F}(U, DU, D^2p)& \text{in}\quad \Omega,\\
\hat{p}=p_1-p_b^+ &\text{on}\quad \Omega_1,\\
-\Delta'(i^*\hat{p})+\mu_7(i^*{\hat{p}})+\mu_9(i^*\p_0\hat{p})\\
\qquad=\bar{g}_8(U,U^-,\psi,DU,D\psi,D^2U,D^2\psi,D^3\psi);
\end{cases}\\
&&\begin{cases}\label{eq539}
r^p-r_b=-\frac{1}{4\pi\mu_6}\int_{{S}^2}\Big(\mu_5\,
i^*(\p_0\hat{p})+\bar{g}_5(U,U^-,\psi,DU,D\psi)\Big)\, \vol,\\
\psi^p=\frac{1}{\mu_2}\left(i^*\hat{p}-{\mu_8}
\int_{{S}^2}i^*(\p_0\hat{p})\,\vol+\bar{g}_7(U,U^-,\psi,D\psi)\right),\\
\psi=\psi^p+r^p;
\end{cases}\\
&&\begin{cases}\label{eq540}
D_u\widehat{A(s)}=0&\text{in}\quad \Omega,\\
i^*(\widehat{A(s)})=\mu_4\,(\psi^p+r^p-r_b)+\bar{g}_4(U,U^-,\psi,D\psi);
\end{cases}\\
&&\begin{cases}\label{eq541}
\dd(\bar{u}_0')=-\frac{1}{\mu_0}\dd \bar{g}_0(U,U^-,D\psi),\\
\dd^*(\bar{u}_0')=\mu_5\,
i^*(\p_0\hat{p})+\mu_6\,
\psi^p+\mu_6\,(r^p-r_b)+\bar{g}_5(U,U^-,\psi,DU,D\psi)
\end{cases} \text{on}\  S^2;\\
&&\begin{cases}\label{eq542}
G(D_uu+\frac{\grad p}{\rho}, \p_\alpha)=0&\text{in}\quad \Omega,\quad \alpha=1,2,\\
u'|_{\Omega_0}=u'_0.
\end{cases}
\end{eqnarray}

\begin{remark}
Recalling \eqref{eq441}, in \eqref{eq538} we should have
\begin{eqnarray}\label{eq543}
L(\hat{p})&\triangleq&-\frac{1}{(y^0)^2}\Delta'\hat{p}+(t(y^0)-1)\p_0^2\hat{p}+\frac{4}{y^0}
b(t(y^0))\p_0\hat{p}+\frac{1}{(y^0)^2}e(t(y^0))\hat{p}\nonumber\\
&&\qquad+\frac{\rho_b(y^0)}{ (y^0)^2}d_1(t(y^0))\hat{E}
+\frac{\rho_b(y^0)^\gamma }{(y^0)^2}d_2(t(y^0))\widehat{A(s)}
=\bar{F}\triangleq F+\tilde{F}_1,
\end{eqnarray}
where the coefficients are known functions of $y^0$, and $F=F(U,DU,D^2p, D\psi, D^2\psi)$ is the higher order term appeared below \eqref{eq441}, which is now writing in the $y$-coordinates; and
\begin{eqnarray}\label{eq554add}
\tilde{F}_1=
O(1)({\psi-r_b})(D^2\hat{p}+D\hat{p}+\hat{U})+O(1)(D^2\hat{p}D\psi)
+O(1)(D\hat{p}D^2\psi)+O(1)(D\hat{p}D\psi).
\end{eqnarray}
We also note that
\begin{eqnarray}
&&\bar{g}_8=g_8+O(\psi-r_b)D\hat{p},\quad \bar{g}_7=g_7+O(\psi-r_b)D\hat{p},\label{eqbarg1}\\
&&\bar{g}_5=g_5+O(\psi-r_b)D\hat{p}, \quad \bar{g}_4=g_4, \quad \bar{g}_0=g_0, \quad \bar{g}_6=\mu_0\bar{g}_5+\dd^*\bar{g}_0.\label{eqbarg2}
\end{eqnarray}
\end{remark}
\begin{remark}
Using \eqref{eq448},  we could  obtain, for $\alpha=1$,  the specific expression of \eqref{eq542} in a local coordinates in $\Omega$:
\begin{eqnarray}
\begin{cases}\label{eq543add}
u^j\p_ju^1+\frac{2u_b(y^0)}{y^0}u^1=-\frac{1}{\rho (y^0)^2}\p_1 p +\bar{f}_1(U, \psi, D\psi, Dp) &\text{in}\quad \Omega,\\
u^1=u^1_0&\text{on}\quad \Omega_0.
\end{cases}
\end{eqnarray}
Here $u^j\p_j$ is $\Psi_*u$ given by \eqref{eq562add},   $u'_0=u_0^1\p_1+u_0^2\p_2$, and
\begin{eqnarray}\label{eq572add2}
\bar{f}_1(U, \psi, D\psi, Dp)&=&\left(\frac{2u_b(y^0)}{y^0}-\frac{2u_b(x^0)}{x^0}\right)u^1
+\frac{\p_1\hat{p}}{\rho}\left(\frac{1}{(y^0)^2}-\frac{1}{(x^0)^2}\right)\nonumber\\
&&\qquad\quad-\frac{1}{\rho (x^0)^2}\left(\frac{y^0-r_1}{r_1-\psi(y')}\p_1\psi\right)\p_0\hat{p}+h_1(U).
\end{eqnarray}
By changing the local coordinates $(y^1, y^2)$, we know that  $u^2$ solves a problem similar to \eqref{eq543add}.
\end{remark}

\subsection{Problem (T4)}
Since the elliptic problem \eqref{eq538} is coupled with the hyperbolic problem \eqref{eq540}, we need to further reformulate Problem (T3) equivalently as the following Problem (T4).

We firstly note that by Lemma \ref{lem52}, Problem (T3) is equivalent to Problem (T1), provided that
\begin{eqnarray}\label{eq556add}
\psi\in C^{4,\alpha}(S^2) \quad\text{and}\quad  |\psi-r_b|<\min\left\{\frac{r_b-r_0}{4}, \frac{r_1-r_b}{4}, h^\sharp\right\}.
\end{eqnarray}
So we may replace the boundary condition in \eqref{eq540} by \eqref{eq516add}, reads now
\begin{eqnarray}\label{eq545}
i^*(\widehat{A(s)})=\frac{\mu_4}{\mu_2}i^*(\hat{p})+\bar{g}_4-\frac{\mu_4}{\mu_2}\bar{g}_2,
\end{eqnarray}
where $\bar{g}_2=g_2, \bar{g}_4=g_4$, and the resultant problem is still equivalent to Problem (T1).

We now consider the Cauchy problem \eqref{eq540}:
\begin{eqnarray}
\begin{cases}\label{eq546}
D_uA(s)=0&\text{in}\quad \Omega,\\
i^*(\widehat{A(s)})=\frac{\mu_4}{\mu_2}i^*(\hat{p})+\bar{g}_4-\frac{\mu_4}{\mu_2}\bar{g}_2.
\end{cases}
\end{eqnarray}
For the vector field $u$ defined in $\Omega$, as in the proof of Theorem \ref{thm61}, we consider the non-autonomous vector field $\frac{u'}{u^0}(y^0, y')$ defined for $y'=(y^1, y^2)\in S^2$ and $y^0\in[r_b, r_1]$. For $\bar{y}\in S^2$, we write the integral curve passing $(r_b, \bar{y})$
as $y'=\varphi(y^0, \bar{y})$, which is a $C^{k,\alpha}$ function in $\Omega$ if $u\in C^{k,\alpha}(\bar{\Omega})$ and $u^0>\delta$ for a positive  constant $\delta$, and $k\in\mathbb{N}$. For fixed $y^0$, the map $\varphi_{y^0}: S^2\to S^2, \ \bar{y}\mapsto y'=\varphi(y^0, \bar{y})$  is a $C^{k,\alpha}$ homeomorphism. Note that $\varphi_{r_b}$ is the identity map on $S^2$.

\begin{lemma}\label{lem53}
Suppose that $u=u^0\p_0+u'\in C^{0,1}$ and $u^0>\delta$. There is a positive constant $C=C(\delta,r_1-r_b)$ so that for any $y'\in S^2$ and $y_0\in[r_b, r_1]$, it holds
\begin{eqnarray}\label{eq576}
\left|(\varphi_{y^0})^{-1}y'-y'\right|\le C\norm{u'}_{C^0(\bar{\Omega})}.
\end{eqnarray}
\end{lemma}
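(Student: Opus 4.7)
The plan is to exploit the ODE characterization of $\varphi$ directly. For fixed $\bar{y}\in S^2$, the curve $y^0\mapsto\varphi(y^0,\bar{y})$ is by definition the integral curve of the non-autonomous vector field $u'/u^0$ on $S^2$, with initial condition $\varphi(r_b,\bar{y})=\bar{y}$. In a local coordinate chart near the trajectory we therefore have
\begin{equation*}
\varphi(y^0,\bar{y})-\bar{y}=\int_{r_b}^{y^0}\frac{u'}{u^0}\bigl(\tau,\varphi(\tau,\bar{y})\bigr)\,\dd\tau.
\end{equation*}
The hypotheses $|u'|\le\norm{u'}_{C^0(\overline{\Omega})}$ and $u^0>\delta$ bound the integrand uniformly by $\delta^{-1}\norm{u'}_{C^0(\overline{\Omega})}$, whence
\begin{equation*}
\bigl|\varphi(y^0,\bar{y})-\bar{y}\bigr|\le\frac{y^0-r_b}{\delta}\norm{u'}_{C^0(\overline{\Omega})}\le\frac{r_1-r_b}{\delta}\norm{u'}_{C^0(\overline{\Omega})}.
\end{equation*}

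To conclude, I would set $\bar{y}=(\varphi_{y^0})^{-1}(y')$, so that $\varphi(y^0,\bar{y})=y'$, and read off
\begin{equation*}
\bigl|(\varphi_{y^0})^{-1}(y')-y'\bigr|=\bigl|\bar{y}-\varphi(y^0,\bar{y})\bigr|\le\frac{r_1-r_b}{\delta}\norm{u'}_{C^0(\overline{\Omega})},
\end{equation*}
which is the claimed estimate with $C=(r_1-r_b)/\delta$.

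The only (minor) subtlety is that $S^2$ has no global chart, so the symbol $|\cdot|$ requires interpretation. This is harmless: either embed $S^2\hookrightarrow\mathbb{R}^3$ and read $|\cdot|$ as the ambient Euclidean distance (the above is then just the bound chord-length $\le$ arc-length, applied to the trajectory in $\mathbb{R}^3$), or use the intrinsic geodesic distance on $(S^2,g)$ and observe that the integral above becomes an upper bound for the arc length of the trajectory, which in turn bounds the geodesic distance between its endpoints. In either interpretation the above estimate goes through without change, and no real obstacle arises — the lemma is essentially a Gr\"{o}nwall-free ODE flow estimate whose only ingredients are $u^0\ge\delta$ and the finite $y^0$-interval length $r_1-r_b$.
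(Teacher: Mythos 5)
Your proof is correct and follows essentially the same route as the paper: set $\bar{y}=(\varphi_{y^0})^{-1}(y')$, write $\varphi_{y^0}(\bar{y})-\bar{y}$ as the integral of $u'/u^0$ along the trajectory, and bound the integrand by $\delta^{-1}\norm{u'}_{C^0(\bar{\Omega})}$ over the interval of length $\le r_1-r_b$. The paper states the same one-line integral estimate; your extra remark on interpreting $|\cdot|$ on $S^2$ is a harmless and reasonable clarification that the paper leaves implicit.
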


\begin{proof}
Let $\bar{y}=(\varphi_{y^0})^{-1}y'$. Then
$\left|\varphi_{y^0}(\bar{y})-\bar{y}\right|\le\int_{r_b}^{y^0}\left|\frac{u'}{u^0}(s, \varphi(s,\bar{y}))\right|\,\dd s\le C\norm{u'}_{C^0(\bar{\Omega})}$
as desired.
\end{proof}

We write the unique solution to the linear transport equation \eqref{eq546} as follows:
\begin{eqnarray}\label{eq548}
A(s)(y)=A(s)(y^0, y')&=&A(s)(y^0,  \varphi_{y^0}(\bar{y}))=A(s)(r_b, \varphi_{r_b}(\bar{y}))=A(s)(r_b, \bar{y})\nonumber\\
&=&(i^*A(s))(\bar{y})=(i^*A(s))((\varphi_{y^0})^{-1}(y')).
\end{eqnarray}
Hence, recall that the entropy is a constant behind the shock-front for the background solution, we have
\begin{eqnarray}\label{eq549}
\widehat{A(s)}(y)
&=&i^*(\widehat{A(s)})(y')+\Big(A(s)(r_b, (\varphi_{y^0})^{-1}(y'))-A(s)(r_b, y')\Big)\nonumber\\
&=&\frac{\mu_4}{\mu_2}i^*(\hat{p})+\bar{g}_4-\frac{\mu_4}{\mu_2}\bar{g}_2+
\Big(A(s)(r_b, (\varphi_{y^0})^{-1}(y'))-A(s)(r_b, y')\Big).
\end{eqnarray}
Set
\begin{eqnarray}\label{eq550}
\tilde{F}_2=-\left\{\bar{g}_4-\frac{\mu_4}{\mu_2}\bar{g}_2+\Big(A(s)(r_b, (\varphi_{y^0})^{-1}(y'))-A(s)(r_b, y')\Big)\right\},
\end{eqnarray}
which is a higher order term (note that $\p_\alpha A(s)$ itself is small, and $\varphi_{y^0}$ is close to the identity map since $u'$ is nearly zero, so $|(\varphi_{y^0})^{-1}(y')-y'|$ is small by \eqref{eq576}). Then
we could write the elliptic equation  \eqref{eq543} as
\begin{eqnarray}\label{eq551}
&&-\frac{1}{(y^0)^2}\Delta'\hat{p}+(t(y^0)-1)\p_0^2\hat{p}+\frac{4}{y^0}
b(t(y^0))\p_0\hat{p}+\frac{1}{(y^0)^2}e(t(y^0))\hat{p}+\frac{\mu_4}{\mu_2}\frac{\rho_b(y^0)^\gamma}{(y^0)^2}d_2(t(y^0))i^*(\hat{p})\nonumber\\
&=&-\frac{\rho_b(y^0)}{ (y^0)^2}d_1(t(y^0))\hat{E}+F+\tilde{F}_1+\tilde{F}_2.
\end{eqnarray}
Now set
\begin{eqnarray}\label{eq552}
e_1(y^0)& = &(y^0)^2\Big(t(y^0)-1\Big)<0,\quad
e_2(y^0)={4y^0}b(t(y^0)),\\
e_3(y^0)& = &e(t(y^0)),\quad
e_4(y^0)=\frac{\mu_4}{\mu_2}(\rho_b(y^0))^\gamma d_2(t(y^0)),\label{eq554}\\
e_5(y^0)& = &-\rho_b(y^0)d_1(t(y^0)),\label{eq555}\\
f& = &f(U,\psi, DU,D^2p, D\psi, D^2\psi)\triangleq (y^0)^2\Big(F+\tilde{F}_1+\tilde{F}_2\Big).\label{eq556}
\end{eqnarray}
Then equation \eqref{eq551} simply reads
\begin{eqnarray}\label{eq557}
\mathfrak{L}(\hat{p})\triangleq-\Delta'\hat{p}+e_1(y^0)\p_0^2\hat{p}+e_2(y^0)\p_0\hat{p}+e_3(y^0)\hat{p}
+e_4(y^0)i^*(\hat{p}) =e_5(y^0)\hat{E}+f.
\end{eqnarray}
Note that there is a nonlocal term $e_4(y^0)i^*(\hat{p})$. So problem \eqref{eq538} can be reformulated as follows:
\begin{eqnarray}
\begin{cases}\label{eq559}
\mathfrak{L}(\hat{p})=e_5(y^0)\hat{E}+f(U,\psi, DU,D^2p, D\psi, D^2\psi)& \text{in}\quad \Omega,\\
\hat{p}=p_1-p_b^+ &\text{on}\quad \Omega_1,\\
-\Delta'(i^*\hat{p})+\mu_7(i^*{\hat{p}})+\mu_9(i^*\p_0\hat{p})\\
\qquad=\bar{g}_8(U,U^-,\psi,DU,D\psi,D^2U,D^2\psi,D^3\psi).
\end{cases}
\end{eqnarray}

We then state Problem (T4), which is equivalent to Problem (T3), as can be seen from the above derivations.

\smallskip
\fbox{
\parbox{0.90\textwidth}{
Problem (T4): Find $\psi\in C^{4,\alpha}(S^2)$ and $U=U_b^++\hat{U}$ defined in $\Omega$ that solve problems \eqref{eq537}, \eqref{eq559} and \eqref{eq539}--\eqref{eq542}.}}

\subsection{A linear second order nonlocal elliptic equation with Venttsel boundary condition}

In this subsection we  study  the linear nonlocal elliptic equation \eqref{eq557} subjected to a Venttsel boundary condition on $\Omega_0$:
\begin{eqnarray}\label{eq564}
\begin{cases}
\mathfrak{L}(\hat{p})=f& \text{in}\quad \Omega,\\
\hat{p}=h_1(y') &\text{on}\quad \Omega_1,\\
-\Delta'(i^*\hat{p})+\mu_7(i^*{\hat{p}})+\mu_9(i^*\p_0\hat{p})=h_0(y') &\text{on}\quad \Omega_0.
\end{cases}
\end{eqnarray}
Here $f\in C^{k-2,\alpha}(\bar{\Omega}), h_1\in C^{k,\alpha}(\Omega_1)$ and $h_0\in C^{k-2,\alpha}(\Omega_0)$ are given nonhomogeneous terms, and $k=2,3,\cdots$. This problem is formulated according to problem \eqref{eq559}.

We remark that Venttsel condition of second order elliptic equations was proposed by A. D. Venttsel in 1959 in the  studies of  probability theory \cite{Venttsel}. It is quite interesting to see that it appears naturally in the studies of transonic shocks. Later  Y.  Luo and N. S. Trudinger established the classical solvability for the linear and quasilinear Venttsel problems in a series of papers (see \cite{LN} for the linear case). However, our problem has two different characters comparing to these classical results. The first is that the elliptic operator $\mathfrak{L}$ in the domain $\Omega$ contains a nonclassical nonlocal term; the second is that the coefficients of the zero-th order terms may change signs. So we could not use directly the classical maximum principles or energy estimates, and we need the S-Condition to avoid some possible spectrums.

\subsubsection{Uniqueness of solutions in Sobolev space $H^2(\Omega)\cap H^2(\Omega_0)$}
We firstly study under what conditions a strong solution $\hat{p}$ in Sobolev space $H^2(\Omega)$ with $i^*\hat{p}\in H^2(\Omega_0)$ to problem \eqref{eq564} is unique. To this end, we consider the homogeneous problem
\begin{eqnarray}\label{eq565}
\begin{cases}
\mathfrak{L}(\hat{p})=0& \text{in}\quad \Omega,\\
\hat{p}=0 &\text{on}\quad \Omega_1,\\
-\Delta'(i^*\hat{p})+\mu_7(i^*{\hat{p}})+\mu_9(i^*\p_0\hat{p})=0 &\text{on}\quad \Omega_0.
\end{cases}
\end{eqnarray}

\begin{remark}[Regularity]\label{rm55}
By Sobolev Embedding Theorem (Theorem 7.26 in \cite[p.171]{GT}), $H^2(\Omega)$ is embedded into $C^{0,\frac12}(\bar{\Omega})$, so the solution is bounded; $H^2(\Omega_0)$ is embedded into $C^{0,1}(\Omega_0)$, so $i^*\hat{p}\in C^{0,1}(\bar{\Omega})$. By Trace Theorem, $i^*(\p_0\hat{p})\in H^{\frac12}(\Omega_0)$.  Now considering $e_3(y^0)\hat{p}+e_4(y^0)i^*(\hat{p})$  as  a nonhomogeneous term,   the Schauder estimate (Corollary 6.7 in \cite[p.100]{GT}) and a standard existence theorem of Dirichlet problems (like Theorem 6.13 in \cite{GT}) imply that for any $\alpha\in (0,\frac12)$, $\hat{p}$ belongs to  $C^{2,\alpha}(\bar{\Omega}\setminus\Omega_0)$. Then by Theorem 6.17 in \cite{GT}, we infer that $\hat{p}\in C^\infty(\bar{\Omega}\setminus\Omega_0)\cap C^{0,\frac12}(\bar{\Omega})$.

We write the boundary equation in \eqref{eq565} as
\begin{eqnarray}\label{589}
\Delta'(i^*\hat{p})=\mu_7i^*(\hat{p})+\mu_9i^*(\p_0\hat{p}).
\end{eqnarray}
Note that the right-hand side belongs to $H^{\frac12}(\Omega_0)$. Applying the $H^s$ regularity theory ({\it cf.} Theorem 11.1 in \cite[Chapter 5, p.442]{T1}) and Sobolev Embedding Theorem, we have $i^*\hat{p}\in H^{\frac52}(\Omega_0)\subset\subset C^{1,\frac12}(\Omega_0)$. Thus by intermediate Schauder estimate ({\it cf.} Notes of Chapter 6 in \cite[p.138]{GT}), we have $\hat{p}\in C^\infty(\Omega\cup\Omega_1)\cap C^{1,\frac12}(\bar{\Omega})$. Hence the right-hand side of \eqref{589} lies in $C^{0,\frac12}(\Omega_0)$. By  Theorem 6.14 in \cite[p.107]{GT}, as well as uniqueness of $H^1$ solutions of $\Delta'$ on $S^2$ (modulo a constant), we get that $i^*\hat{p}\in C^{2,\frac12}(\Omega_0)$. Then Theorem 6.14 in \cite[p.107]{GT} implies that $\hat{p}\in C^{2,\frac12}(\bar{\Omega})$, hence $i^*\hat{p}\in C^{3,\frac12}({\Omega_0})$. Thanks to Theorem 6.19 in \cite[p.111]{GT}, we get that $\hat{p}\in C^{3,\frac12}(\bar{\Omega})$, then $i^*\hat{p}\in C^{4,\frac12}(\Omega_0)$, and hence $\hat{p}\in C^{4,\frac12}(\bar{\Omega})$, etc. Using this boot-strap argument, we deduce that if $\hat{p}\in H^2(\Omega)$ with $i^*\hat{p}\in H^2(\Omega_0)$ is a strong solution to \eqref{eq565}, then $\hat{p}$ is a classical solution and  belongs to $C^\infty(\bar{\Omega})$.
\end{remark}

Now we wish to show that $\hat{p}\equiv0$. The idea is to use the method of separation of variables via the spherical harmonics.

Let $u_{n,m}(y')$ be the eigenfunctions
of $-\Delta'$ on ${S}^2$ with respect to the eigenvalues
$\lambda_n=n(n+1)\ge 0, n=0,1,2,\cdots, m=-n, \cdots, -1, 0,1, \cdots, n$. (See Lemma A.4 in \cite[p.2539]{CY2013} or \cite[Section 8.4, Theorem 1$'$]{BC}.) Since $\{u_{n,m}(y')\}_{n,m}$ form a complete unit orthogonal
basis of $L^2(S^2)$, we could  write
\begin{eqnarray}\label{eq572add}
\hat{p}(y)=\sum_{n=0}^\infty\sum_{m=-n}^{n} v_{n,m}(y^0)u_{n,m}(y'),
\end{eqnarray}
with (recall that $y=(y^0, y')$)
\begin{eqnarray*}
v_{n,m}(y^0)=\int_{S^2}\hat{p}(y^0, y')u_{n,m}(y')\,\dd S(y').
\end{eqnarray*}
Here $\dd S$ is the standard Lebesgue measure on $S^2$. For $\hat{p}\in C^{k,\alpha}(\bar{\Omega})$ and $k\ge 2$, we easily deduce that  $v_{n,m}(y^0)$ belongs to $C^{k,\alpha}([r_b, r_1])$ for all $n, m$, and it is also true that the series \eqref{eq572add} converges in $C^{k-2}(\bar{\Omega})$ ({\it cf.} \cite[Section 8.6, Theorem 3]{BC}).

Substituting \eqref{eq572add} into \eqref{eq565},  each $v_{n,m}$ solves the following nonlocal ordinary differential equation:
\begin{eqnarray}\label{eq566}
&&e_1(y^0)v_{n,m}''+e_2(y^0)v_{n,m}'+(e_3(y^0)
+\lambda_n)v_{n,m}+e_4(y^0)v_{n,m}(r_b)=0,\nonumber\\ &&\qquad\qquad n=0, 1, 2, \cdots;\ \  m=-n, \cdots, n, \quad  y^0\in[r_b, r_1],
\end{eqnarray}
and the two-point boundary conditions:
\begin{eqnarray}\label{eq567}
v_{n,m}'(r_b)+\frac{\mu_7+\lambda_n}{\mu_9}v_{n,m}(r_b)=0,\qquad v_{n,m}(r_1)=0.
\end{eqnarray}
We need to find conditions to guarantee that all $v_{n,m}$ are zero.

Suppose that $v_{n,m}'(r_b)=0,$ then by uniqueness of solutions of Cauchy problem of ordinary differential equations, obviously one has that $v_{n,m}\equiv 0$ in $[r_b, r_1]$.

If $v_{n,m}(r_b)\ne0$ for some $n,m$, we set $w_n=w_{n,m}(y^0)=v_{n,m}(y^0)/v_{n,m}(r_b)$. Then it solves
\begin{eqnarray}\label{eq568}
\begin{cases}
e_1(y^0)w_{n}''+e_2(y^0)w_{n}'+(e_3(y^0)+\lambda_n)w_{n}=-e_4(y^0),\\
w_{n}(r_b)=1,\qquad w_{n}'(r_b)=-\frac{\mu_7+\lambda_n}{\mu_9},\qquad w_{n}(r_1)=0.
\end{cases}
\end{eqnarray}

\begin{definition}\label{def52}
We say a background solution $U_b$ satisfies the {\it S-Condition}, if for
each $n=0,1,2,\cdots$,  problem \eqref{eq568} does {\it not} have
a classical solution.
\end{definition}

If the background solution $U_b$ satisfies the S-Condition, then all $v_{n,m}$ are zero, hence problem \eqref{eq565} has only the trivial solution. Recall that a background  solution is determined by the five parameters: $\gamma>1, r_b\in(r_0, r_1), p_s=p_b^+(r_b)>0, \rho_s=\rho_b^+(r_b)>0, M_s=M_b^+(r_b)\in(0,1)$.
We have shown in \cite{CY2013} that almost all background solutions satisfy the S-Condition. For example, we have the following lemma ({\it cf.} Lemma 2.6 in \cite{CY2013}).

\begin{lemma}\label{lem54}
For any given $\gamma>1$, $M_s\in(0, 1)$, $\rho_s>0$ and  $p_s>0$ , there is a set $\mathcal{S}\subset(r_0, r_1)$ of at most countable infinite points such that the background solution $U_b$ determined by $\gamma, r_b\in(r_0, r_1)\setminus \mathcal{S}, \rho_s, p_s, M_s$ satisfies the S-Condition.
\end{lemma}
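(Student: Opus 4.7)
The plan is to reduce the claim to a statement about zeros of real-analytic functions of the single parameter $r_b$. For each $n\in\{0,1,2,\ldots\}$, let $E_n\subset(r_0,r_1)$ denote the set of those $r_b$ for which \eqref{eq568} admits a classical solution. The desired exceptional set is $\mathcal{S}=\bigcup_{n\ge 0}E_n$, and since a countable union of at most countable sets is at most countable, it suffices to prove that each $E_n$ is at most countable (in fact, has no accumulation point in $(r_0,r_1)$).

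The first step is to fix $\gamma,M_s,\rho_s,p_s$, and let $r_b\in(r_0,r_1)$ vary. By the discussion in Section \ref{sec511}, the background solution $U_b^+$ depends real-analytically on $r_b$ on its maximal interval of existence, and hence so do the coefficients $e_1,\ldots,e_4$ (on any fixed compact subinterval where they are defined) and the constants $\mu_2,\mu_7,\mu_9$. To remove the $r_b$-dependence of the domain, I would introduce the affine change of variable $\tau=(y^0-r_b)/(r_1-r_b)\in[0,1]$ and rewrite the ODE in \eqref{eq568} as a linear second-order equation in $\tau$ whose coefficients depend real-analytically on $(\tau,r_b)\in[0,1]\times(r_0,r_1)$, with initial data at $\tau=0$ given by
\begin{equation*}
w_n(0;r_b)=1,\qquad \tfrac{d}{d\tau}w_n(0;r_b)=-(r_1-r_b)\,\frac{\mu_7(r_b)+\lambda_n}{\mu_9(r_b)}
\end{equation*}
which is real-analytic in $r_b$. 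Standard analytic dependence of ODE solutions on initial data and parameters (Cauchy--Kowalevski or direct power-series arguments) then shows that the unique Cauchy solution $w_n(\tau;r_b)$ is real-analytic in $r_b$. In particular, the function
\begin{equation*}
\Phi_n(r_b)\triangleq w_n(1;r_b)
\end{equation*}
is real-analytic on $(r_0,r_1)$, and $r_b\in E_n$ exactly when $\Phi_n(r_b)=0$.

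Next I would show $\Phi_n\not\equiv 0$ by examining the limit $r_b\to r_1^-$. On the interval of length $r_1-r_b\to 0$, the scaled ODE coefficients remain bounded uniformly (the $(r_1-r_b)^2$ factor coming from the change of variables controls the second-order term), the initial slope tends to $0$ (because it is multiplied by $r_1-r_b$), and hence by Gr\"onwall $w_n(\tau;r_b)\to 1$ uniformly in $\tau\in[0,1]$ as $r_b\to r_1^-$. Therefore $\lim_{r_b\to r_1^-}\Phi_n(r_b)=1\ne 0$, so $\Phi_n$ is a nontrivial real-analytic function on $(r_0,r_1)$; its zero set $E_n$ has no accumulation point in $(r_0,r_1)$, and is in particular at most countable. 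Taking $\mathcal{S}=\bigcup_n E_n$ yields the desired countable set, and for $r_b\in(r_0,r_1)\setminus\mathcal{S}$ the S-Condition holds by construction.

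The main obstacle is the justification of real-analytic dependence of $w_n(\tau;r_b)$ on $r_b$, because the parametric dependence enters through (i) the background solution via an ODE system whose analytic extension in $r_b$ requires one to remain in the subsonic region (guaranteed by the extension to $[r_b-h^\sharp,r_1]\times S^2$ quoted in Section \ref{sec511}), (ii) the shock-generated constants $\mu_j$, and (iii) the $n$-dependent initial slope; one must verify that all three ingredients extend analytically to a common $r_b$-neighborhood so that the composed Cauchy problem has an analytic solution. A secondary, minor issue is checking uniformity of the Gr\"onwall estimate in $n$, which is not needed here since we work with each $n$ separately.
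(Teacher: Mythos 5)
Your argument is correct and follows essentially the same route as the paper's: both pass to the rescaled variable $\tau=z=(y^0-r_b)/(r_1-r_b)$ to fix the domain, invoke real-analytic dependence of the Cauchy solution $w_n$ on the parameter $r_b$, define the analytic function $\Phi_n(r_b)=w_n(1;r_b)$, and show $\Phi_n\not\equiv 0$ by letting $r_b\to r_1^-$ (the paper substitutes $r_b=r_1$ directly and reads off $w_n\equiv 1$; your Gr\"onwall limit is the same observation). The only cosmetic difference is that the paper additionally uses compactness of $[r_0,r_1]$ to conclude finitely many zeros per $n$, whereas you stop at ``discrete in $(r_0,r_1)$, hence countable,'' which suffices for the statement.
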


\begin{proof}
Using \eqref{eq35} and \eqref{eqmachode}, we  note that all the coefficients $e_1, e_2, e_3$ and $e_4$ could be solved, for the given constant $\gamma>1$ and initial data $M_s$, $\rho_s$. They are real analytic functions independent of $r_b$. Now we change the independent variables $y^0$ to $z$ given by  $z=\frac{y^0-r_b}{r_1-r_b}$. Then \eqref{eq568} becomes
\begin{eqnarray}\label{eq594add}
&&\begin{cases}
e_1((r_1-r_b)z+r_b)w_n''(z)+(r_1-r_b)e_2((r_1-r_b)z+r_b)w_n'(z)\\
+(r_1-r_b)^2
\Big(e_3((r_1-r_b)z+r_b)+\lambda_n\Big)w_n(z)\\
\qquad\qquad=-(r_1-r_b)^2
e_4((r_1-r_b)z+r_b)&z\in[0,1],\\
w_n(0)=1, w_n'(0)=-(r_1-r_b)\frac{\mu_7+\lambda_n}{\mu_9},
\end{cases}\\
&&w_n(1)=0.\label{eq595add}
\end{eqnarray}
Checking the expressions of $\mu_7$ and $\mu_9$, they are analytic with respect to $r_b$. So we see that the problem \eqref{eq594add} depends continuously on $r_b\in[r_0, r_1]$ and analytically on $r_b\in[r_0, r_1)$. Hence the unique solution $w_n$ to this Cauchy problem is also real analytic with respect to the parameter $r_b$. We write it as $w_n=w_n(z; r_b)$. Particulary, $\vartheta_n(r_b)\triangleq w_n(1; r_b)$ is continuous for $r_b\in[r_0, r_1]$ and real analytic for $r_b\in [r_0, r_1)$. For given $n=0, 1, 2, \cdots$, suppose now there are infinite numbers of  $r_b$ so that $\theta_n(r_b)=0$. Then by compactness of $[r_0, r_1]$, the function $\vartheta_n$ has a non-isolated zero point. So it must be identically zero and we have $\theta_n(r_1)=0.$ However, for $r_b=r_1$, problem \eqref{eq594add} is reduced to
$$w_n''(z)=0,\quad z\in[0,1]; \qquad w_n(0)=1, \ \ w_n'(0)=0.$$
Hence $w_n(1)=1$, namely $\vartheta_n(r_1)=1$, contradicts to our conclusion that $\vartheta_n(r_1)=0$. So for each fixed $n$, there are at most finite numbers of zeros of $\vartheta_n$. Hence there are at most countable infinite numbers of $r_b$ so that the problem \eqref{eq594add} and \eqref{eq595add} may have a solution. The conclusion for \eqref{eq568} then follows.
\end{proof}


\subsubsection{Uniform  a priori estimate in H\"{o}lder spaces}
By considering the nonlocal term $e_4(y^0)i^*(\hat{p})$ in $\mathfrak{L}(\hat{p})$  as part of the non-homogenous term,  and applying  Theorem 1.5 in \cite[p.198]{LN} for the Venttsel problem (note that $\mu_9<0$) and Theorem 6.6 in \cite{GT} for the
Dirichlet problem, with the aid of a standard higher regularity
argument as in Theorem 6.19 of \cite{GT}, and interpolation inequalities (Lemma 6.35 in \cite[p.135]{GT}),  we infer that  any $\hat{p}\in C^{k,\alpha}(\bar{\Omega})$ ($k=2,3$) solves problem \eqref{eq564} should satisfy the estimate
\begin{eqnarray}\label{eq569}
\norm{\hat{p}}_{C^{k,\alpha}(\bar{\Omega})}\le
C\Big(\norm{\hat{p}}_{C^0(\bar{\Omega})}+\norm{f}_{C^{k-2,\alpha}(\bar{\Omega})}
+\norm{h_1}_{C^{k,\alpha}(\Omega_1)}+\norm{h_0}_{C^{k-2,\alpha}(\Omega_0)}\Big),
\end{eqnarray}
with  $C$ a constant depending only on the background solution $U_b$.

Then, by \eqref{eq569}
and a compactness argument, we have the {\it a priori} estimate:
\begin{eqnarray}\label{eq570}
\norm{\hat{p}}_{C^{k,\alpha}(\bar{\Omega})}\le
C\Big(\norm{f}_{C^{k-2,\alpha}(\bar{\Omega})}
+\norm{h_1}_{C^{k,\alpha}(\Omega_1)}+\norm{h_0}_{C^{k-2,\alpha}(\Omega_0)}\Big)
\end{eqnarray}
for any $C^{k,\alpha}$ solution of problem \eqref{eq564}, provided that the only solution to problem \eqref{eq565} is zero.

In fact, to prove \eqref{eq570}, by setting $K\triangleq\norm{f}_{C^{k-2,\alpha}(\bar{\Omega})}
+\norm{h_1}_{C^{k,\alpha}(\Omega_1)}+\norm{h_0}_{C^{k-2,\alpha}(\Omega_0)},$ we just need to show that there is a constant $C$ so that
\begin{eqnarray}\label{eq571add}
\norm{\hat{p}}_{C^{0}(\bar{\Omega})}\le
CK,
\end{eqnarray}
thanks to the inequality \eqref{eq569}. Suppose that \eqref{eq571add} is false. Then there are $\hat{p}^{(n)}\in C^{k,\alpha}(\bar{\Omega})$ so that $\norm{\hat{p}^{(n)}}_{C^{0}(\bar{\Omega})}>
nK_{n}.$ Here $K_n$ is obtained from replacing $f$ by $f^{(n)}\triangleq\mathfrak{L}(\hat{p}^{(n)})$, $h_1$ by $h_1^{(n)}\triangleq\hat{p}^{(n)}|_{\Omega_1}$, and $h_0$ by $h_0^{(n)}\triangleq-\Delta'(i^*\hat{p}^{(n)})+\mu_7i^*\hat{p}^{(n)}
+\mu_9i^*(\p_0\hat{p}^{(n)})$. By linearity of the problem, without loss of generality, we assume that for any $n$, $\norm{\hat{p}^{(n)}}_{C^{0}(\bar{\Omega})}= 1$. Then $K_n\to 0$ as $n\to\infty$ and \eqref{eq569} implies that
$\norm{\hat{p}^{(n)}}_{C^{k,\alpha}(\bar{\Omega})}\le 2C.$
From Ascoli--Arzela Lemma, there is a subsequence $\{\hat{p}^{(n_j)}\}_j$ of $\{\hat{p}^{(n)}\}_n$ so that $\hat{p}^{(n_j)}$ converges to a $\hat{p}^\sharp$ in $C^{k}(\bar{\Omega})$ ($k=2,3$). Thus by uniqueness of solution of problem \eqref{eq564}, taking $j\to\infty$,  we must have $\hat{p}^\sharp\equiv0$, contradicting to the fact that $\norm{\hat{p}^{\sharp}}_{C^0(\bar{\Omega})}\
=\lim_{j\to\infty}\norm{\hat{p}^{(n_j)}}_{C^0(\bar{\Omega})}=1$.
Thus we proved \eqref{eq571add}.

\subsubsection{Uniform a priori estimate in Sobolev spaces}
Suppose now that $\hat{p}\in H^2(\Omega)\cap H^2(\Omega_0)$, which means that $\hat{p}\in H^2(\Omega)$ and the trace of $\hat{p}$ on $\Omega_0$, namely $i^*\hat{p}$, belongs to $H^2(\Omega_0)$. Obviously our assumptions on problem \eqref{eq564} guarantee that $f\in L^2(\Omega)$, $h_1\in H^2(S^2)$, and $h_0\in L^2(S^2)$.  Then by Trace Theorem and Interpolation Inequalities of Sobolev functions ({\it cf.} Theorem 7.28 in \cite[p.173]{GT}), we have
\begin{eqnarray}
\norm{i^*\hat{p}}_{L^2(S^2)}&\le& C\norm{\hat{p}}_{H^1(\Omega)}\le
\varepsilon\norm{\hat{p}}_{H^2(\Omega)}+C(\varepsilon)\norm{\hat{p}}_{L^2(\Omega)},\\
\norm{i^*(\p_0\hat{p})}_{L^2(S^2)}&\le& \norm{i^*(\p_0\hat{p})}_{H^{\frac14}(S^2)}\le C\norm{\p_0\hat{p}}_{H^{\frac34}(\Omega)}\le C\norm{\hat{p}}_{H^{\frac74}(\Omega)}\nonumber\\
&\le&\varepsilon\norm{\hat{p}}_{H^2(\Omega)}+C(\varepsilon)\norm{\hat{p}}_{L^2(\Omega)},\quad \forall \ \varepsilon\in(0,1).
\end{eqnarray}
Now applying Theorem 8.12 in \cite[p.186]{GT}  to the boundary equation in \eqref{eq564}, we have
\begin{eqnarray}
\norm{i^*\hat{p}}_{H^2(S^2)}&\le& C\Big(\norm{i^*\hat{p}}_{L^2(S^2)}+\norm{h_0}_{L^2(S^2)}+\norm{i^*(\p_0\hat{p})}_{L^2(S^2)}\Big)\nonumber\\
&\le&C\varepsilon\norm{\hat{p}}_{H^2(\Omega)}+C'(\varepsilon)\norm{\hat{p}}_{L^2(\Omega)}+C\norm{h_0}_{L^2(S^2)}.
\end{eqnarray}
Using the same theorem to problem \eqref{eq564}, with given Dirichlet data $i^*\hat{p}$,  it follows that
\begin{eqnarray*}
&&\norm{\hat{p}}_{H^2(\Omega)}+\norm{i^*\hat{p}}_{H^2(S^2)}\le C\Big(\norm{i^*\hat{p}}_{H^2(S^2)}+\norm{h_1}_{H^2(S^2)}+\norm{\hat{p}}_{L^2(\Omega)}
+\norm{f}_{L^2(\Omega)}\Big)\nonumber\\
&\le&C\varepsilon\norm{\hat{p}}_{H^2(\Omega)}+C'(\varepsilon)\norm{\hat{p}}_{L^2(\Omega)}+
C\Big(\norm{h_0}_{L^2(S^2)}+\norm{h_1}_{H^2(S^2)}+\norm{f}_{L^2(\Omega)}\Big).
\end{eqnarray*}
Taking $\varepsilon=1/(2C)$, we get
\begin{eqnarray}\label{eq5103add}
\norm{\hat{p}}_{H^2(\Omega)}+\norm{i^*\hat{p}}_{H^2(S^2)}\le
C\Big(\norm{\hat{p}}_{L^2(\Omega)}+\norm{h_0}_{L^2(S^2)}+\norm{h_1}_{H^2(S^2)}+\norm{f}_{L^2(\Omega)}\Big).
\end{eqnarray}
With an argument similar to the proof of \eqref{eq570} ({\it cf.} Lemma 9.17 in \cite[p.242]{GT}), we deduce the {\it a priori} estimate
\begin{eqnarray}\label{eq5102add}
\norm{\hat{p}}_{H^2(\Omega)}+\norm{i^*\hat{p}}_{H^2(S^2)}\le
C\Big(\norm{h_0}_{L^2(S^2)}+\norm{h_1}_{H^2(S^2)}+\norm{f}_{L^2(\Omega)}\Big),
\end{eqnarray}
provided that the S-Condition holds. Here the constant $C$ depends only on the background solution $U_b$ and $r_b, r_1$.

%

\subsubsection{Approximate solutions}
We now use spherical harmonic expansion to establish a family of approximate solutions to problem \eqref{eq564}.

For simplicity, we take $h_1=0$  in the sequel. There is no loss of generality, since  this accounts we replace $\hat{p}$ by $\hat{p}-h_1$, and $f$ by $f-\mathfrak{L}h_1$, $h_0$ by $h_0+\Delta'h_1-\mu_7h_1$ in problem \eqref{eq564}.

We also set $\{f^{(k)}\}_k$ to be a sequence of $C^\infty(\bar{\Omega})$ functions that converges to $f$ in $C^{k-2,\alpha}(\bar{\Omega})$, and $\{h^{(k)}_0\}_k\subset C^\infty(S^2)$ converges to $h_0$ in $C^{k-2,\alpha}(S^2)$. Now for fixed $k$, we consider problem \eqref{eq564}, with $f$ there replaced by $f^{(k)}$, and $h_0$ replaced by $h^{(k)}_0$.

Suppose that
\begin{gather}
f^{(k)}(y)=\sum_{n=0}^\infty\sum_{m=-n}^n f^{(k)}_{n,m}(y^0)u_{n,m}(y'),\quad
h_0^{(k)}(y')=\sum_{n=0}^\infty\sum_{m=-n}^n (h_0^{(k)})_{n,m}u_{n,m}(y').\label{eq599}
\end{gather}
Then for $\hat{p}$ given by \eqref{eq572add}, each $v_{n,m}(y^0)$ should solve the following two-point boundary value problem of an ordinary differential equation containing a nonlocal term:
\begin{eqnarray}\label{eq5100add}
\begin{cases}
L_{n,m}(v_{n,m})\triangleq v_{n,m}''(y^0)+p(y^0)v_{n,m}'(y^0)+q_n(y^0)v_{n,m}(y^0)\\
\qquad\quad\quad\quad=r(y^0)v_{n,m}(r_b)+\tilde{f}_{n,m}(y^0)&\quad y^0\in[r_b, r_1],\\
v_{n,m}'(r_b)+a_nv_{n,m}(r_b)=h_{n,m},\\ v_{n,m}(r_1)=0.
\end{cases}
\end{eqnarray}
Here we define
\begin{gather*}
p(y^0)=\frac{e_2(y^0)}{e_1(y^0)},\ \ q_n(y^0)=\frac{e_3(y^0)+\lambda_n}{e_1(y^0)},\quad r(y^0)=-\frac{e_4(y^0)}{e_1(y^0)},\ \ \tilde{f}_{n,m}(y^0)=\frac{f^{(k)}_{n,m}(y^0)}{e_1(y^0)},\\
a_n=\frac{\mu_7+\lambda_n}{\mu_9},\quad h_{n,m}=\frac{(h_0^{(k)})_{n,m}}{\mu_9}.
\end{gather*}
We will show that this problem is uniquely solvable.

Let $\varphi^1_{n,m}$ be the unique solution of the Cauchy problem
$$L_{n,m}(v_{n,m})=0,\quad v_{n,m}(r_b)=1,\quad v_{n,m}'(r_b)=0,$$
and $\varphi^2_{n,m}$ the unique solution to
$$L_{n,m}(v_{n,m})=0,\quad v_{n,m}(r_b)=0,\quad v_{n,m}'(r_b)=1.$$
Then by standard theory of linear ordinary differential equations,  a general solution  to problem
\begin{eqnarray}\label{eq5100add2}
\begin{cases}
L_{n,m}(v_{n,m})=r(y^0)v_{n,m}(r_b)+\tilde{f}_{n,m}(y^0)&\quad y^0\in[r_b, r_1],\\
v_{n,m}(r_b)=c_1, \qquad v_{n,m}'(r_b)=c_2
\end{cases}
\end{eqnarray}
is given by
\begin{eqnarray}\label{eq5101add}
v_{n,m}(y^0)=c_1\varphi^1_{n,m}(y^0)+c_2\varphi^2_{n,m}(y^0)+\int_{r_b}^{y^0}K(y^0, s) \Big(c_1r(s)+\tilde{f}_{n,m}(s)\Big)\,\dd s,
\end{eqnarray}
where
$$K(t,s)\triangleq \frac{\varphi^1_{n,m}(s)\varphi^2_{n,m}(t)-\varphi^1_{n,m}(t)\varphi^2_{n,m}(s)}
{\varphi^1_{n,m}(s)(\varphi^2_{n,m})'(s)-(\varphi^1_{n,m})'(s)\varphi^2_{n,m}(s)}.$$
We claim that there are constants $c_1$ and $c_2$ so that
\begin{gather}\label{eq5103add}\begin{cases}
c_2+a_nc_1=h_{n,m},\\
\left(\varphi^1_{n,m}(r_1)+\int_{r_b}^{r_1}K(r_1,s)r(s)\,\dd s\right)c_1+\varphi^2_{n,m}(r_1)c_2=-\int_{r_b}^{r_1}K(r_1,s)\tilde{f}_{n,m}(s)\,\dd s.\end{cases}
\end{gather}
Actually, this is a linear algebraic system and we know that, under the S-Condition, the homogeneous system has only the trivial solution. So there is one and only one pair $(c_1, c_2)$ solves \eqref{eq5103add}. Hence \eqref{eq5101add} gives the unique solution to problem \eqref{eq5100add}. Note that $\tilde{f}_{n,m}\in C^{\infty}([r_b, r_1])$ as  $f^{(k)}\in C^{\infty}(\bar{\Omega})$, and the coefficients $p, q_n, r$ are all real analytic, so the solution $v_{n,m}(y^0)$ belongs to  $C^{\infty}([r_b, r_1]).$

Now for $N\in\mathbb{N}$, we define
\begin{gather*}
\hat{p}_{N}(y)=\sum_{n=0}^N\sum_{m=-n}^n v_{n,m}(y^0)u_{n,m}(y'), \\
f^{(k)}_{N}(y)=\sum_{n=0}^N\sum_{m=-n}^n f^{(k)}_{n,m}(y^0)u_{n,m}(y'),\quad (h_0^{(k)})_{N}(y')=\sum_{n=0}^N\sum_{m=-n}^n (h_0^{(k)})_{n,m}u_{n,m}(y').
\end{gather*}
Apparently  $\hat{p}_{N}, \ f^{(k)}_N\in C^{\infty}(\bar{\Omega})$, and $(h_0^{(k)})_{N}\in C^\infty(S^2)$. It is also easy to check that $\hat{p}_{N}$ solves the following problem:
\begin{eqnarray}\label{eq5106add}
\begin{cases}
\mathfrak{L}(\hat{p}_N)=f^{(k)}_N& \text{in}\quad \Omega,\\
\hat{p}_N=0 &\text{on}\quad \Omega_1,\\
-\Delta'(i^*\hat{p}_N)+\mu_7(i^*{\hat{p}_N})+\mu_9(i^*\p_0\hat{p}_N)=(h_0^{(k)})_N &\text{on}\quad \Omega_0.
\end{cases}
\end{eqnarray}

\subsubsection{Existence}
By the estimate \eqref{eq5102add}, for any $N,M\in\mathbb{N}$ with $N<M$, there holds
\begin{multline*}
\norm{\hat{p}_M-\hat{p}_N}_{H^2(\Omega)}+\norm{i^*(\hat{p}_M-\hat{p}_N)}_{H^2(\Omega_0)}\le C\Big(\norm{(h_0^{(k)})_M-(h_0^{(k)})_N}_{L^2(S^2)}+\norm{f^{(k)}_M-f^{(k)}_N}_{L^2(\Omega)}\Big).
\end{multline*}
Recall that $(h_0^{(k)})_N\to h_0^{(k)}$ in $L^2(S^2)$ and $f^{(k)}_N\to f^{(k)}$ in $L^2(\Omega)$, we infer that  $\{\hat{p}_N\}$  (respectively $i^*{\hat{p}_N}$) is a Cauchy sequence in $H^2(\Omega)$ (respectively $H^2(\Omega_0)$). So there is a $\hat{p}^{(k)}\in H^2(\Omega)$ (respectively $q^{(k)}\in H^2(\Omega_0)$) and $\hat{p}_N\to \hat{p}^{(k)}$ in $H^2(\Omega)$ (respectively $i^*\hat{p}_N\to q^{(k)}$ in $H^2(\Omega_0)$)  as $N\to \infty$. By continuity of trace operator, we conclude that $q^{(k)}=i^*\hat{p}^{(k)}$. Taking the limit $N\to\infty$ in problem  \eqref{eq5106add}, one sees that $\hat{p}^{(k)}$ is a $H^2(\Omega)\cap H^2(\Omega_0)$ solution to problem \eqref{eq564}, with $f$ there replaced by $f^{(k)}$, $h_0$ replaced by $h_0^{(k)}$. Then by the same arguments as in Remark \ref{rm55}, $\hat{p}^{(k)}\in C^{\infty}(\bar{\Omega})$ and of course it satisfies the estimate \eqref{eq570}.

Now for the approximate solutions $\{\hat{p}^{(k)}\}_k$, we use the estimate \eqref{eq570} to infer that
\begin{eqnarray*}
\norm{\hat{p}^{(k)}}_{C^{k,\alpha}(\bar{\Omega})}&\le& C\Big(\norm{f^{(k)}}_{C^{k-2,\alpha}(\bar{\Omega})}+\norm{h_0^{(k)}}_{C^{k-2,\alpha}(\Omega_0)}\Big)\nonumber\\
&\le&C\Big(\norm{f}_{C^{k-2,\alpha}(\bar{\Omega})}+\norm{h_0}_{C^{k-2,\alpha}(\Omega_0)}\Big).
\end{eqnarray*}
Hence by Ascoli--Arzela Lemma, there is a subsequence of $\{\hat{p}^{(k)}\}$ that converges to some $\hat{p}\in C^{k,\alpha}(\bar{\Omega})$ in the norm of $C^{k}(\bar{\Omega})$. Taking limit with respect to this subsequence in the boundary value problems of $\hat{p}^k$, we easily see that $\hat{p}$ is a classical solution to problem \eqref{eq564}. Therefore, we proved the following lemma.

\begin{lemma}\label{lem55}
Suppose that the S-Condition holds. Then problem \eqref{eq564} has one and only one solution in $C^{k,\alpha}(\bar{\Omega})$, and it satisfies the estimate \eqref{eq570}.
\end{lemma}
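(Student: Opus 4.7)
The plan is to combine the uniqueness argument via spherical harmonics with a two-level existence argument by approximation, leaning on the a priori estimates already established.

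First I would establish uniqueness directly. Any $C^{k,\alpha}(\bar{\Omega})$ solution $\hat p$ of the homogeneous problem \eqref{eq565} admits a spherical harmonic expansion of the form \eqref{eq572add}, and each coefficient $v_{n,m}$ satisfies the homogeneous two-point problem \eqref{eq566}--\eqref{eq567}. The S-Condition rules out nonzero solutions: if $v_{n,m}(r_b)=0$ then $v_{n,m}\equiv 0$ by Cauchy uniqueness for linear ODEs; otherwise the normalization $w_n = v_{n,m}/v_{n,m}(r_b)$ produces a solution to \eqref{eq568}, which is forbidden by Definition \ref{def52}. Hence $\hat p\equiv 0$, so any solution of \eqref{eq564} is unique and the a priori estimate \eqref{eq570} automatically holds.

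For existence I would use the two-level approximation already prepared in the text. Pick $f^{(k)}\in C^\infty(\bar{\Omega})$ and $h_0^{(k)}\in C^\infty(S^2)$ converging to $f$ and $h_0$ in $C^{k-2,\alpha}$, reducing to $h_1=0$ by subtracting a suitable extension as indicated before \eqref{eq599}. For each fixed smooth $(f^{(k)}, h_0^{(k)})$, expand in spherical harmonics and seek $\hat p = \sum_{n,m} v_{n,m}(y^0)\,u_{n,m}(y')$. Each scalar two-point problem \eqref{eq5100add} is uniquely solvable: writing the general solution via the fundamental pair $\varphi^1_{n,m}, \varphi^2_{n,m}$ reduces the two boundary conditions to the $2\times 2$ linear system \eqref{eq5103add}, whose determinant is nonzero precisely because the S-Condition forbids nontrivial solutions to \eqref{eq568}. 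The resulting $v_{n,m}\in C^\infty([r_b, r_1])$ assemble into the partial sums $\hat p_N$, each of which solves the truncated problem \eqref{eq5106add}. Applying the Sobolev a priori estimate \eqref{eq5102add} to $\hat p_M - \hat p_N$ shows $\{\hat p_N\}$ is Cauchy in $H^2(\Omega)$ with $\{i^*\hat p_N\}$ Cauchy in $H^2(\Omega_0)$; the limit $\hat p^{(k)}$ is therefore a strong solution for the smoothed data, and the bootstrap of Remark \ref{rm55}---alternating interior Schauder estimates with tangential $H^s$-regularity for the Venttsel equation on $\Omega_0$---promotes it to $C^\infty(\bar{\Omega})$. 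The Hölder estimate \eqref{eq570} applied uniformly in $k$ then gives
\[
\norm{\hat p^{(k)}}_{C^{k,\alpha}(\bar{\Omega})} \le C\bigl(\norm{f}_{C^{k-2,\alpha}(\bar{\Omega})} + \norm{h_1}_{C^{k,\alpha}(\Omega_1)} + \norm{h_0}_{C^{k-2,\alpha}(\Omega_0)}\bigr),
\]
so a diagonal Ascoli--Arzelà extraction produces $\hat p^{(k_j)}\to \hat p$ in $C^k(\bar{\Omega})$ with $\hat p\in C^{k,\alpha}(\bar{\Omega})$ solving \eqref{eq564} classically and inheriting the estimate.

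The main obstacle I anticipate is the simultaneous presence of the nonlocal trace term $e_4(y^0)\,i^*(\hat p)$ inside the elliptic operator $\mathfrak{L}$ and the Venttsel coupling between $i^*\hat p$ and $i^*(\partial_0\hat p)$ on $\Omega_0$: classical maximum principles do not apply, standard energy estimates fail (the zero-order coefficients need not have a favorable sign), and treating $e_4 i^*\hat p$ as forcing creates a circular dependence that cannot be closed by naive iteration. The spherical harmonic decomposition is what unlocks the problem, since it \emph{simultaneously} diagonalizes the nonlocal interior operator and the Venttsel boundary operator, collapsing the entire existence obstruction to the countable scalar family \eqref{eq568}---which is exactly what the S-Condition is engineered to exclude.
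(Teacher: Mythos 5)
Your proposal is correct and follows essentially the same route as the paper: uniqueness via the spherical harmonic decomposition and the S-Condition, the a priori estimate \eqref{eq570} via \eqref{eq569} and a compactness argument, and existence by smoothing the data, assembling the truncated solutions $\hat p_N$ from the scalar two-point problems \eqref{eq5100add}, passing to the limit in $H^2$ via \eqref{eq5102add}, and bootstrapping with Remark \ref{rm55} before applying Ascoli--Arzel\`a. The only cosmetic difference is that you fold the compactness argument for \eqref{eq570} into the uniqueness step rather than presenting it separately as in Section 5.6.2.
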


\subsection{Solvability of Problem (T4)}\label{sec58}
We now use Banach fixed point theorem to solve the transonic shock problem (T4), provided that the background solution $U_b$ satisfies the S-Condition.

\subsubsection{The iteration sets}\label{sec581}
Let $\sigma_0$  be a positive constant to be specified later, and
\begin{eqnarray*} \label{258}
\mathcal{K}_\sigma\triangleq\left\{\psi\in C^{4,\alpha}({S}^2)\,:\,
\norm{\psi-r_b}_{C^{4,\alpha}({S}^2)}\le
\sigma\le\sigma_0\right\}
\end{eqnarray*}
be the set of possible shock-front.  For any given $\psi\in \mathcal{K}_\sigma$, its position $r^p$ and
profile $\psi^p$ also satisfy
\begin{eqnarray*}
|{r^p-r_b}|\le\sigma,\qquad \norm{\psi^p}_{C^{4,\alpha}(\mathbf{S}^2)}\le 2\sigma.
\end{eqnarray*}
We write the set of possible variations of the subsonic flows as
\begin{eqnarray*}
O_\delta\triangleq\Big\{\check{U}=(\check{E},\check{p}, \check{s}, \check{u}')\,:\,
\norm{\check{U}}_{3}+\norm{i^*\check{U}}_{C^{3,\alpha}(S^2)}\le \delta\le\delta_0\Big\},
\end{eqnarray*}
with  $\delta_0$ a constant to be chosen. The norm $\norm{\cdot}_k$ appeared here is defined by \eqref{eq452}, with $\mathcal{M}$ there replaced by $\Omega$.

Given $U^-$ satisfying
\eqref{55015}, for any $\psi\in \mathcal{K}_\sigma$ and $\check{U}\in
O_\delta$, we construct a mapping
$$\mathcal{T}: \mathcal{K}_\sigma\times
O_\delta\rightarrow \mathcal{K}_\sigma\times O_\delta, \quad (\psi,\check{U})\mapsto (\hat{\psi},\hat{U})
$$
as follows. One should note that a fixed point of this mapping is a solution to Problem (T4).

\subsubsection{Construction of iteration mapping}\label{sec531}
For any $\psi\in \mathcal{K}_\sigma$ and $\check{U}\in
O_\delta$, we set $$U=\check{U}+U_b^+(\frac{r_1-\psi}{r_1-r_b}(y^0-r_b)+\psi(y'), y').$$
Then with the known supersonic flow $U^-$, we could specify all the higher-order terms $f$ and $\bar{g}_k$ appeared in Problem (T4).
\medskip

\paragraph{\bf Bernoulli constant $E$.\ }
We firstly solve the linear problem ({\it cf.} \eqref{eq537})
\begin{eqnarray}\label{eq587}
\begin{cases}
D_u\hat{E}=0&\text{in}\quad \Omega,\\
\hat{E}=E^--E_b&\text{on}\quad \Omega_0.
\end{cases}
\end{eqnarray}
Note here that $E^-$ is determined by $U^-$, and  $E_b$ is a constant. So by \eqref{55015}, it holds
\begin{eqnarray}\label{eq5116}
\norm{i^*(E^--E_b)}_{C^{3,\alpha}(S^2)}
=\norm{(E^--E_b)|_{S^\psi}}_{C^{3,\alpha}(S^2)}\le C_0\varepsilon.
\end{eqnarray}
Hence we could easily get the unique existence of $\hat{E}\in C^{2,\alpha}(\bar{\Omega})$ (note that $u\in C^{2,\alpha}(\bar{\Omega})$) with
\begin{eqnarray}\label{eq588}
\norm{\hat{E}}_{C^{2,\alpha}(\bar{\Omega})}\le \norm{i^*(E^--E_b)}_{C^{2,\alpha}(S^2)}\le C_0\varepsilon.
\end{eqnarray}
The constant $C_0$ appeared here and below depends only on the background solution and $r_b, r_1$.

\medskip
\paragraph{\bf Pressure $p$.\ }
We then consider the following problem on $\hat{p}$ ({\it cf.} \eqref{eq559}):
\begin{eqnarray}
\begin{cases}\label{eq589}
\mathfrak{L}(\hat{p})=e_5(y^0)\hat{E}+f(U,\psi, DU,D^2p, D\psi, D^2\psi)& \text{in}\quad \Omega,\\
\hat{p}=p_1-p_b^+ &\text{on}\quad \Omega_1,\\
-\Delta'(i^*\hat{p})+\mu_7(i^*{\hat{p}})+\mu_9(i^*\p_0\hat{p})\\
\qquad=\bar{g}_8(U,U^-,\psi,DU,D\psi,D^2U,D^2\psi,D^3\psi).
\end{cases}
\end{eqnarray}
Here the non-homogeneous terms $f$ and $\bar{g}_8$  are determined by $\psi\in \mathcal{K}_\sigma$ and $U=\check{U}+U_b$, with $\check{U}\in O_\delta$, and $\hat{E}$ is solved from \eqref{eq587}.
Then, since we assumed that the S-Condition holds, by Lemma \ref{lem55}, we could solve uniquely one $\hat{p}\in C^{3,\alpha}(\bar{\Omega})$ and it satisfies the following estimate:
\begin{eqnarray}\label{eq590}
\norm{\hat{p}}_{C^{3,\alpha}(\bar{\Omega})}\le C_0\Big(\norm{f}_{C^{1,\alpha}(\bar{\Omega})}+\norm{\bar{g}_8}_{C^{1,\alpha}(S^2)}
+\norm{\hat{E}}_{C^{1,\alpha}(\bar{\Omega})}+\norm{p_1-p_b^+}_{C^{3,\alpha}(\Omega_1)}\Big).
\end{eqnarray}
Checking the definitions of $f$ and $\bar{g}_8$, the right-hand side is finite; actually we have (see Lemma \ref{lem61} in the Appendix)
\begin{eqnarray}\label{eq591}
\norm{f}_{C^{1,\alpha}(\bar{\Omega})}\le C(\delta^2+\sigma^2+\varepsilon^2+\varepsilon), \qquad \norm{\bar{g}_8}_{C^{1,\alpha}(S^2)}\le C(\delta^2+\varepsilon^2+\sigma^2+\varepsilon).
\end{eqnarray}
So combining \eqref{eq588}, \eqref{eq58add} and \eqref{eq590}, \eqref{eq591}, one infers that
\begin{eqnarray}\label{eq592}
\norm{\hat{p}}_{C^{3,\alpha}(\bar{\Omega})}\le C_0\Big(\delta^2+\sigma^2+\varepsilon^2+\varepsilon\Big).
\end{eqnarray}

\medskip
\paragraph{\bf Update shock-front $\hat{\psi}$.\ }
With the specified higher-order terms $\bar{g}_5$ and $\bar{g}_7$, and $\hat{p}$ solved from \eqref{eq589}, we now set ({\it cf.} \eqref{eq539})
\begin{eqnarray}
&&\begin{cases}\label{eq593}\displaystyle
\hat{r}^p-r_b=-\frac{1}{4\pi\mu_6}\int_{{S}^2}\Big(\mu_5\,
i^*(\p_0\hat{p})+\bar{g}_5(U,U^-,\psi,DU,D\psi)\Big)\, \vol,\\
\displaystyle\hat{\psi}^p=\frac{1}{\mu_2}\left(i^*\hat{p}-{\mu_8}
\int_{{S}^2}i^*(\p_0\hat{p})\,\vol+\bar{g}_7(U,U^-,\psi,D\psi)\right),\\
\hat{\psi}=\hat{\psi}^p+\hat{r}^p.
\end{cases}
\end{eqnarray}
It follows easily that (using \eqref{eq592})
\begin{eqnarray}\label{eq594}
\norm{\hat{\psi}^p}_{C^0(S^2)}+|\hat{r}^p-r_b|&\le& C_0\Big(\norm{\bar{g}_5}_{C^0(S^2)}+\norm{\bar{g}_7}_{C^0(S^2)}
+\norm{\hat{p}}_{C^{1}(\bar{\Omega})}\Big)\nonumber\\
&\le&
C_0\Big(\delta^2+\sigma^2+\varepsilon^2+\varepsilon\Big).
\end{eqnarray}

For the $C^{4,\alpha}$ estimate of $\hat{\psi}^p$, we note that $i^*\hat{p}$ solves the third  equation in \eqref{eq589}, hence $\hat{\psi}^p$ solves the following elliptic equation on $S^2$ ({\it cf.} \eqref{eq525}):
\begin{eqnarray}\label{eq595}
-\Delta'\hat{\psi}^p+\mu_7\hat{\psi}^p=\mu_0\mu_6(\hat{r}^p-r_b)+\mu_0\mu_5\,i^*\p_0\hat{p}
+\bar{g}_6(U,U^-,\psi,DU^-,DU,D\psi,D^2\psi).
\end{eqnarray}
Standard Schauder estimates \cite[Chapter 6]{GT} yield that
\begin{eqnarray}\label{eq5125}
\norm{\hat{\psi}^p}_{C^{4,\alpha}(S^2)}&\le& C_0\Big(\norm{\bar{\psi}^p}_{C^0(S^2)}
+\norm{\bar{g}_6}_{C^{2,\alpha}(S^2)}
+\norm{\hat{p}}_{C^{3,\alpha}(S^2)}+|\hat{r}^p-r_b|\Big)\nonumber\\
&\le&C_0\Big(\delta^2+\sigma^2+\varepsilon^2+\varepsilon\Big).
\end{eqnarray}
One then has
\begin{eqnarray}\label{eq596}
\norm{\hat{\psi}-r_b}_{C^{4,\alpha}(S^2)}\le C_0\Big(\delta^2+\sigma^2+\varepsilon^2+\varepsilon\Big).
\end{eqnarray}

We also need to show that
\begin{eqnarray}\label{eq596add}
\int_{S^2}\hat{\psi}^p\,\vol=0.
\end{eqnarray}
In fact, integrating  \eqref{eq595} on $S^2$, and recall $\bar{g}_6=\mu_0\bar{g}_5+\dd^*\bar{g_0}$, using Divergence Theorem and definition of $\hat{r}^p-r_b$ in \eqref{eq593}, we  have directly \eqref{eq596add}.

\medskip
\paragraph{\bf Entropy $A(s)$.\ }
Now we  solve ({\it cf.} \eqref{eq540}, and note that ${A(s_b^+)}$ is constant)
\begin{eqnarray}\label{eq597}
\begin{cases}
D_u\widehat{A(s)}=0&\text{in}\quad \Omega,\\
i^*(\widehat{A(s)})=\mu_4\,(\hat{\psi}-r_b)+\bar{g}_4(U,U^-,\psi,D\psi)
\end{cases}
\end{eqnarray}
to obtain the unique solution $\widehat{A(s)}$. It also holds
\begin{eqnarray}\label{eq598}
&&\norm{i^*\widehat{A(s)}}_{C^{3,\alpha}(S^2)}\le C_0\Big(\norm{\hat{\psi}-r_b}_{C^{3,\alpha}(S^2)}+\norm{\bar{g}_4}_{C^{3,\alpha}(S^2)}\Big)
\le C_0\Big(\delta^2+\sigma^2+\varepsilon^2+\varepsilon\Big),\\
&&\norm{\widehat{A(s)}}_{C^{2,\alpha}(\bar{\Omega})}\le C_0\norm{i^*\widehat{A(s)}}_{C^{2,\alpha}(S^2)}\le
C_0\Big(\delta^2+\sigma^2+\varepsilon^2+\varepsilon\Big).\label{eq5130}
\end{eqnarray}

\medskip
\paragraph{\bf Tangential velocity field ${u}_0'$ on $\Omega_0$.}
Next we study the tangential velocity $u'_0$ on $\Omega_0$ by ({\it cf.} \eqref{eq541})
\begin{eqnarray}
\begin{cases}\label{eq5100}
\dd(\bar{u}_0')=-\frac{1}{\mu_0}\dd \bar{g}_0(U,U^-, \psi, D\psi),\\
\dd^*(\bar{u}_0')=\mu_5\,
i^*(\p_0\hat{p})+\mu_6\,
\hat{\psi}^p+\mu_6\,(\hat{r}^p-r_b)+\bar{g}_5(U,U^-,\psi,DU,D\psi)
\end{cases}\qquad \text{on}\  S^2.
\end{eqnarray}
By \eqref{eq596add} and the first equation in \eqref{eq593},  we see the requirements in Theorem \ref{them62} are fulfilled. So one can solve a unique  $\bar{u}_0'$ on $S^2$, and the  following estimate is valid:
\begin{eqnarray}\label{eq5101}
\norm{\bar{u}_0'}_{C^{3,\alpha}(S^2)}&\le& C_0\Big(\norm{\hat{p}}_{C^{3,\alpha}(\bar{\Omega})}
+\norm{\hat{\psi}-r_b}_{C^{2,\alpha}(S^2)}
+\norm{\bar{g}_0}_{C^{3,\alpha}(S^2)}
+\norm{\bar{g}_5}_{C^{2,\alpha}(S^2)}\Big)\nonumber\\
&\le& C_0 \Big(\delta^2+\sigma^2+\varepsilon^2+\varepsilon\Big).
\end{eqnarray}
Note that $\bar{u}_0'$ is a 1-form on $S^2$. Let
${u}_0'=u_0^\beta\p_\beta$ be the associated vector field on $S^2$.
Then according to \eqref{eq5101}, $\hat{u}_0^\beta=u_0^\beta$
satisfies the following inequality
\begin{eqnarray}\label{eq5102}
\sum_{\beta=1}^2 \norm{\hat{u}_0^\beta}_{C^{3,\alpha}(S^2)}\le C_0
\Big(\delta^2+\sigma^2+\varepsilon^2+\varepsilon\Big).
\end{eqnarray}
\medskip

\paragraph{\bf Tangential velocity $\hat{u}'$ in $\Omega$.}
Finally, we solve the tangential velocity ${u}'$ in $\Omega$ through ({\it cf.} \eqref{eq543add})
\begin{eqnarray}
\begin{cases}\label{eq5103}
u^j\p_j\hat{u}^1+2\frac{u_b^0(y^0)}{y^0}\hat{u}^1=-\frac{1}{\rho (y^0)^2}\p_1 \hat{p} +\bar{f}_1(U, \psi, D\psi) &\text{in}\quad \Omega,\\
\hat{u}^1=\hat{u}_0^1&\text{on}\quad \Omega_0.
\end{cases}
\end{eqnarray}
Here the Cauchy data $\hat{u}_0^\beta$ on $\Omega_0$ is solved from
\eqref{eq5100}.

We obtain a unique $\hat{u}^1$ in $\Omega$ by Theorem \ref{thm61}. From \eqref{eq590} and \eqref{eq5102},  it holds that
\begin{eqnarray}\label{eq5104}
\norm{\hat{u}^1}_{C^{2,\alpha}(\bar{\Omega})}&\le& C_0\Big(\norm{\hat{p}}_{C^{3,\alpha}(\bar{\Omega})}+\norm{\bar{f}_1}_{C^{2,\alpha}(\bar{\Omega})}
+\norm{\hat{u}_0^1}_{C^{2,\alpha}(\Omega_0)}\Big)\nonumber\\
&\le&C_0\Big(\delta^2+\sigma^2+\varepsilon^2+\varepsilon\Big).
\end{eqnarray}
By changing the coordinates system, we get $\hat{u}^2$ and it also satisfies an estimate like  \eqref{eq5104}.

\medskip
\paragraph{\bf Conclusion.\ } From the above six steps, we get uniquely one pair $(\hat{U}, \hat{\psi})$ and it follows from \eqref{eq5116}, \eqref{eq588}, \eqref{eq592}, \eqref{eq596}, \eqref{eq598},
\eqref{eq5130}, \eqref{eq5102} and \eqref{eq5104} that
\begin{eqnarray}\label{eq5105}
\norm{\hat{\psi}-r_b}_{C^{4,\alpha}(S^2)}+\norm{\hat{U}}_3+\norm{i^*\hat{U}}_{C^{3,\alpha}(S^2)}\le \tilde{C}\Big(\delta^2+\sigma^2+\varepsilon^2+\varepsilon\Big).
\end{eqnarray}
Here $\tilde{C}$ is a constant depending only on the background solution and $r_b, r_1$.
Now we choose $C_*=4\tilde{C}$ and $\varepsilon_0\le \min\Big\{ {1}/{(16\tilde{C}^2)}, 1, h^\sharp/(8\tilde{C})\Big\}$.
Then, for $\delta=\sigma=C_*\varepsilon$, we have
$\tilde{C}\Big(\delta^2+\sigma^2+\varepsilon^2+\varepsilon\Big)\le\delta,\,
\forall \varepsilon\in(0,\varepsilon_0),$
and the  estimate \eqref{eq5105} shows
that $\hat{U}\in O_{C_*\varepsilon}$ and $\hat{\psi}\in \mathcal{K}_{C_*\varepsilon}.$
Hence we construct the desired mapping $\mathcal{T}$ on $\mathcal{K}_{C_*\varepsilon}\times O_{C_*\varepsilon}$.

\subsubsection{Contraction of iteration mapping}\label{sec532}
What left is to show that the mapping $$\mathcal{T}: \mathcal{K}_{C_*\varepsilon}\times
O_{C_*\varepsilon}\rightarrow \mathcal{K}_{C_*\varepsilon}\times O_{C_*\varepsilon}, \quad (\psi,\check{U})\mapsto (\hat{\psi},\hat{U})
$$
is a contraction in the sense that
\begin{multline}\label{eq5106}
\norm{\hat{\psi}^{(1)}-\hat{\psi}^{(2)}}_{C^{3,\alpha}(S^2)}+\norm{\hat{U}^{(1)}
-\hat{U}^{(2)}}_2+\norm{i^*(\hat{U}^{(1)}
-\hat{U}^{(2)})}_{C^{2,\alpha}(S^2)}\\
\le\frac12\Big(\norm{{\psi}^{(1)}-{\psi}^{(2)}}_{C^{3,\alpha}(S^2)}+\norm{\check{U}^{(1)}
-\check{U}^{(2)}}_2+\norm{i^*(\check{U}^{(1)}
-\check{U}^{(2)})}_{C^{2,\alpha}(S^2)}\Big)\triangleq\frac12Q,
\end{multline}
provided that $\varepsilon_0$ is further small (depending only on the background solution). Here for $j=1,2$, and any $\psi^{(j)}\in \mathcal{K}_{{C_*\varepsilon}}, \check{U}^{(j)}\in O_{{C_*\varepsilon}}$, we have defined $(\hat{\psi}^{(j)}, \hat{U}^{(j)})=\mathcal{T}(\psi^{(j)}, \check{U}^{(j)})$.

To prove \eqref{eq5106}, we set $\hat{\psi}=\hat{\psi}^{(1)}-\hat{\psi}^{(2)}$, and $\hat{U}=\hat{U}^{(1)}-\hat{U}^{(2)}$.   For $k=1,2$, we also use the notations
\begin{gather*}
(U^-)^{(k)}=\left.U^-\right|_{S^{\psi^{(k)}}}, \quad i^*(U_b^+)^{(k)}=\left.U_b^+\right|_{S^{\psi^{(k)}}},\\ (U_b^+)^{(k)}=(U_b^+)(\frac{r_1-\psi^{(k)}(y')}{r_1-r_b}(y^0-r_b)+\psi^{(k)}(y'), y'),\quad U^{(k)}=\check{U}^{(k)}+(U_b^+)^{(k)}.\end{gather*}
By \eqref{55015} and analyticity of $U_b^\pm$, the mean value theorem implies that there is a constant $C$ depending only on the background solution so that
\begin{eqnarray}
&&\norm{i^*(U_b^+)^{(1)}-i^*(U_b^+)^{(2)}}_{C^{k,\alpha}(S^2)}\le C\norm{\psi^{(1)}-\psi^{(2)}}_{C^{k,\alpha}(S^2)},\label{eq5139}\\
&&\norm{(U_b^+)^{(1)}-(U_b^+)^{(2)}}_{C^{k,\alpha}{(\bar{\Omega}})}\le C\norm{\psi^{(1)}-\psi^{(2)}}_{C^{k,\alpha}(S^2)},\quad k=1,2,3,4,\label{eq5140}\\
&&\norm{(U^--U_b^-)^{(1)}-(U^--U_b^-)^{(2)}}_{C^{2,\alpha}(S^2)}\le C\varepsilon\norm{\psi^{(1)}-\psi^{(2)}}_{C^{2,\alpha}(S^2)}.\label{eq5141}
\end{eqnarray}

\noindent{\it Step 1.} We note that $\hat{E}$ solves the following problem ({\it cf.} \eqref{eq587})
\begin{eqnarray*}
\begin{cases}
D_{u^{(1)}}\hat{E}+D_{u^{(1)}-u^{(2)}}\hat{E}^{(2)}=0&\text{in}\ \ \Omega,\\
\hat{E}=E^-|_{S^{\psi^{(1)}}}-E^-|_{S^{\psi^{(2)}}} &\text{on}\ \ \Omega_0.
\end{cases}
\end{eqnarray*}
By mean value theorem, \eqref{eq21} and \eqref{55015}, there holds
\begin{eqnarray}\label{eq5143}
\norm{i^*\hat{E}}_{C^{2,\alpha}(S^2)}&\le&\norm{\p_0E^-}_{C^{2,\alpha}(\overline{\mathcal{M}})}
\norm{\psi^{(1)}-\psi^{(2)}}_{C^{2,\alpha}(S^2)}\nonumber\\
&\le&C\norm{{(u^\beta)}^- \p_\beta E^-}_{C^{2,\alpha}(\overline{\mathcal{M}})}\norm{\psi^{(1)}-\psi^{(2)}}_{C^{2,\alpha}(S^2)}\nonumber\\
&\le& C\varepsilon\norm{\psi^{(1)}-\psi^{(2)}}_{C^{2,\alpha}(S^2)}.
\end{eqnarray}
Then using Theorem \ref{thm61} and \eqref{eq5140}, one has
\begin{eqnarray}\label{eq5144}
\norm{\hat{E}}_{C^{1,\alpha}(\bar{\Omega})}&\le&\norm{i^*\hat{E}}_{C^{1,\alpha}(S^2)}
+\norm{u^{(1)}-u^{(2)}}_{C^{1,\alpha}(\bar{\Omega})}\norm{\hat{E}^{(2)}}_{C^{2,\alpha}(\bar{\Omega})}\nonumber\\
&\le& C\varepsilon\norm{\psi^{(1)}-\psi^{(2)}}_{C^{2,\alpha}(S^2)}+
CC_*\varepsilon\left(\norm{\check{U}^{(1)}-\check{U}^{(2)}}_{C^{1,\alpha}(\bar{\Omega})}
+\norm{{U_b^+}^{(1)}-{U_b^+}^{(2)}}_{C^{1,\alpha}(\bar{\Omega})}\right)\nonumber\\
&\le&C\varepsilon\left(\norm{\psi^{(1)}-\psi^{(2)}}_{C^{2,\alpha}(S^2)}+
\norm{\check{U}^{(1)}-\check{U}^{(2)}}_{2}\right)\le C\varepsilon Q.
\end{eqnarray}

\noindent {\it Step 2.} Next we seek an estimate of $\hat{p}$, which solves  ({\it cf.} \eqref{eq589})
\begin{eqnarray}
\begin{cases}\label{eq5145}
\mathfrak{L}(\hat{p})=e_5(y^0)\hat{E}+f^{(1)}-f^{(2)}& \text{in}\quad \Omega,\\
\hat{p}=0 &\text{on}\quad \Omega_1,\\
-\Delta'(i^*\hat{p})+\mu_7(i^*{\hat{p}})+\mu_9(i^*\p_0\hat{p})=\bar{g}_8^{(1)}-\bar{g}_8^{(2)}.
\end{cases}
\end{eqnarray}
Here for $k=1,2$,
\begin{eqnarray*}
&&f^{(k)}=f(U^{(k)},\psi^{(k)}, DU^{(k)},D^2p^{(k)}, D\psi^{(k)}, D^2\psi^{(k)}),\\
&&\bar{g}_8^{(k)}=\bar{g}_8(U^{(k)},(U^-)^{(k)},\psi^{(k)},DU^{(k)},D\psi^{(k)},D^2U^{(k)},D^2\psi^{(k)},D^3\psi^{(k)}).
\end{eqnarray*}
By Lemma \ref{lem55}, Lemma \ref{lem62} and \eqref{eq5144}, direct computation yields
\begin{eqnarray}\label{eq5148}
\norm{\hat{p}}_{C^{2,\alpha}(\bar{\Omega})}\le C\left(\norm{\hat{E}}_{C^{\alpha}(\bar{\Omega})}
+\norm{f^{(1)}-f^{(2)}}_{C^{\alpha}(\bar{\Omega})}
+\norm{\bar{g}_8^{(1)}-\bar{g}_8^{(2)}}_{C^\alpha(S^2)}\right)
\le C\varepsilon Q.
\end{eqnarray}

\noindent {\it Step 3.} From \eqref{eq593}, we see that
\begin{eqnarray*}
&&\begin{cases}\displaystyle
\hat{r}^p=-\frac{1}{4\pi\mu_6}\int_{{S}^2}\Big(\mu_5\,
i^*(\p_0\hat{p})+(\bar{g}_5^{(1)}-\bar{g}_5^{(2)})\Big)\, \vol,\\
\hat{\psi}^p=\frac{1}{\mu_2}\left(i^*\hat{p}-{\mu_8}
\int_{{S}^2}i^*(\p_0\hat{p})\,\vol+(\bar{g}_7^{(1)}-\bar{g}_7^{(2)})\right),\\
\hat{\psi}=\hat{\psi}^p+\hat{r}^p.
\end{cases}
\end{eqnarray*}
Here, for $k=1,2$,
\begin{eqnarray*}
&&\bar{g}_5^{(k)}=\bar{g}_5(U^{(k)},(U^-)^{(k)},\psi^{(k)},DU^{(k)},D\psi^{(k)}),\\
&&\bar{g}_7^{(k)}=\bar{g}_7(U^{(k)},(U^-)^{(k)},\psi^{(k)},D\psi^{(k)}).
\end{eqnarray*}
Then we have the following estimate via \eqref{eq5148}, Lemma \ref{lem62} and some straightforward computations:
\begin{eqnarray}\label{eq5152}
\norm{\hat{\psi}^p}_{C(S^2)}+|\hat{r}^p|&\le& C\left(\norm{\hat{p}}_{C^1(\bar{\Omega})}+\norm{\bar{g}_5^{(1)}-\bar{g}_5^{(2)}}_{C(\bar{\Omega})}+
\norm{\bar{g}_7^{(1)}-\bar{g}_7^{(2)}}_{C(\bar{\Omega})}\right)\nonumber\\
&\le&C\varepsilon Q.
\end{eqnarray}
By \eqref{eq595}, note that $\hat{\psi}^p$ also solves
\begin{eqnarray*}
-\Delta'\hat{\psi}^p+\mu_7\hat{\psi}^p=\mu_0\mu_6\hat{r}^p+\mu_0\mu_5\,i^*\p_0\hat{p}
+\bar{g}_6^{(1)}-\bar{g}_6^{(2)},
\end{eqnarray*}
with
\begin{eqnarray*}
\bar{g}_6^{(k)}=\bar{g}_6(U^{(k)},(U^-)^{(k)},\psi^{(k)},(DU^-)^{(k)},DU^{(k)},D\psi^{(k)},D^2\psi^{(k)}),\quad k=1,2,
\end{eqnarray*}
it follows that, from \eqref{eq5152} and \eqref{eq5148},
\begin{eqnarray*}
\norm{\hat{\psi}^p}_{C^{3,\alpha}(S^2)}&\le& C\left(\norm{\hat{\psi}^p}_{C(S^2)}+|\hat{r}^p|+
\norm{\hat{p}}_{C^{2,\alpha}(\bar{\Omega})}
+\norm{\bar{g}_6^{(1)}-\bar{g}_6^{(2)}}_{C^{1,\alpha}(S^2)}\right)\nonumber\\
&\le&C\varepsilon Q.
\end{eqnarray*}
This and \eqref{eq5152} imply that
\begin{eqnarray}\label{eq5156}
\norm{\hat{\psi}}_{C^{3,\alpha}(S^2)}\le C\varepsilon Q.
\end{eqnarray}

\noindent {\it Step 4.} From \eqref{eq597}, one has
\begin{eqnarray*}
\begin{cases}
D_{u^{(1)}}\widehat{A(s)}+D_{u^{(1)}-u^{(2)}}\widehat{A(s)}^{(2)}=0&\text{in}\quad \Omega,\\
i^*(\widehat{A(s)})=\mu_4\,\hat{\psi}+\bar{g}_4^{(1)}-\bar{g}_4^{(2)},
\end{cases}
\end{eqnarray*}
where
\begin{eqnarray*}
\bar{g}^{(k)}_4=\bar{g}_4(U^{(k)},(U^-)^{(k)},\psi^{(k)},D\psi^{(k)}).\quad k=1,2.
\end{eqnarray*}
By \eqref{eq5156} and Lemma \ref{lem62}, we have
\begin{eqnarray}\label{5159}
\norm{i^*(\widehat{A(s)})}_{C^{2,\alpha}(S^2)}&\le& C\left(\norm{\hat{\psi}}_{C^{2,\alpha}(S^2)}+\norm{\bar{g}_4^{(1)}-\bar{g}_4^{(2)}}_{C^{2,\alpha}(S^2)}\right)\nonumber\\ &\le&C\varepsilon Q.
\end{eqnarray}
It is also easy to show that
\begin{eqnarray}\label{5160}
\norm{\widehat{A(s)}}_{C^{1,\alpha}(\bar{\Omega})}\le \norm{i^*(\widehat{A(s)})}_{C^{1,\alpha}(S^2)}+\norm{u^{(1)}-u^{(2)}}_{C^{1,\alpha}(\bar{\Omega})}
\norm{\widehat{A(s)}^{(2)}}_{C^{2,\alpha}(\bar{\Omega})}
\le C\varepsilon Q.
\end{eqnarray}

\noindent {\it Step 5.} Next we turn to \eqref{eq5100} to find that the difference of tangential velocity field on $\Omega_0$ solves
\begin{eqnarray*}
\begin{cases}
\dd(\bar{\hat{u}}_0')=-\frac{1}{\mu_0}\dd (\bar{g}_0^{(1)}-\bar{g}_0^{(2)}),\\
\dd^*(\bar{\hat{u}}_0')=\mu_5\,
i^*(\p_0\hat{p})+\mu_6\,
\hat{\psi}+(\bar{g}_5^{(1)}-\bar{g}_5^{(2)})
\end{cases}\qquad \text{on}\  S^2,
\end{eqnarray*}
where, for $k=1,2$,
\begin{eqnarray*}
\bar{g}_0^{(k)}&=&\bar{g}_0(U^{(k)},(U^-)^{(k)}, \psi^{(k)}, D\psi^{(k)}),\\
\bar{g}_5^{(k)}&=&\bar{g}_5(U^{(k)},(U^-)^{(k)},\psi^{(k)},DU^{(k)},D\psi^{(k)}).
\end{eqnarray*}
By Theorem \ref{them62}, we have the estimate
\begin{eqnarray}\label{5164}
\norm{{\hat{u}}_0'}_{C^{2,\alpha}(S^2)}&\le& C\left(\norm{\bar{g}_0^{(1)}-\bar{g}_0^{(2)}}_{C^{2,\alpha}(S^2)}+\norm{\hat{p}}_{C^{2,\alpha}(\bar{\Omega})}
+\norm{\hat{\psi}}_{C^{1,\alpha}(S^2)}+\norm{\bar{g}_5^{(1)}-\bar{g}_5^{(2)}}_{C^{1,\alpha}(S^2)}\right)\nonumber\\
&\le&C\varepsilon Q.
\end{eqnarray}

\noindent {\it Step 6.} From \eqref{eq5103}, $\hat{u}^1$, the first component of the difference of the tangential velocity in $\Omega$, solves
\begin{eqnarray*}
\begin{cases}
(u^j)^{(1)}\p_j\hat{u}^1+2\frac{u_b^0(y^0)}{y^0}\hat{u}^1+\Big((u^j)^{(1)}
-(u^j)^{(2)}\Big)\p_j(\hat{u}^{(2)})^1\\
\qquad=-\frac{1}{\rho^{(1)} (y^0)^2}\p_1 \hat{p}+\frac{1}{(y^0)^2}\frac{\rho^{(1)}-\rho^{(2)}}{\rho^{(1)}\rho^{(2)}}\p_1 \hat{p}^{(2)} +(\bar{f}_1^{(1)}-\bar{f}_1^{(2)}) &\text{in}\quad \Omega,\\
\hat{u}^1=\hat{u}_0^1&\text{on}\quad \Omega_0.
\end{cases}
\end{eqnarray*}
where
\begin{eqnarray*}
\bar{f}_1^{(k)}=\bar{f}_1(U^{(k)}, \psi^{(k)}, D\psi^{(k)}), \quad k=1, 2.
\end{eqnarray*}
So there holds
\begin{eqnarray}\label{5167}
\norm{\hat{u}^1}_{C^{1,\alpha}(\bar{\Omega})}&\le& C\left(\norm{\hat{p}}_{C^{2,\alpha}(\bar{\Omega})}+
\norm{\bar{f}_1^{(1)}-\bar{f}_1^{(2)}}_{C^{1,\alpha}(\bar{\Omega})}
+C_*\varepsilon\norm{U^{(1)}-U^{(2)}}_{C^{1,\alpha}(\bar{\Omega})}\right)\nonumber\\
&\le&C\varepsilon Q.
\end{eqnarray}
There is a similar estimate  for the second component, namely $\hat{u}^2$.

\medskip \noindent {\it Conclusion.} Now summing up the inequalities
\eqref{eq5143}\eqref{eq5144}\eqref{eq5148}\eqref{eq5156}
\eqref{5159}\eqref{5160}\eqref{5164} and \eqref{5167}, we get
\begin{eqnarray*}
\norm{\hat{\psi}}_{C^{3,\alpha}(S^2)}+\norm{\hat{U}}_2
+\norm{i^*\hat{U}}_{C^{2,\alpha}(S^2)}
\le C'\varepsilon Q,
\end{eqnarray*}
which implies \eqref{eq5106} if $\varepsilon\in(0, \varepsilon_0)$ and  $C'\varepsilon_0<1/2$.
Finally, by Banach fixed point theorem, we infer Problem (T4), hence Problem (T), has one and only one solution in $\mathcal{K}_{C_*\varepsilon}\times O_{C_*\varepsilon}.$ This finishes the proof of Theorem  \ref{thm501}.

\section{Appendix}\label{sec6}
We provide here some results used in this paper, together with some details on the estimates of higher-order terms.

\subsection{Solvability and estimate of transport equations}
We consider the following Cauchy problem of a transport equation for the unknown $E$ in $\mathcal{M}=(r_0, r_1)\times M$, where $M$ is a smooth closed surface:
\begin{eqnarray}\label{eq61}
D_uE+a E=f\quad \text{in}\ \mathcal{M},\qquad E=E_0\quad \text{on} \ M_0=\{r_0\}\times M.
\end{eqnarray}
The main result is:
\begin{theorem}\label{thm61}
For a fixed number $\alpha\in(0,1)$ and $k\in\mathbb{N}$, suppose that the vector field $u=u^0\p_0+u'$ and the functions $a, f$ belong to the H\"{o}lder space  $C^{k,\alpha}(\overline{\mathcal{M}})$, and $E_0\in C^{k,\alpha}(M)$, and furthermore, $u^0$ has a positive lower bound $\delta$ in $\mathcal{M}$. Then there is uniquely one solution $E$ to problem \eqref{eq61}, and there is a positive constant $C=C({M}, |r_1-r_0|, \delta, \norm{u}_{C^{k,\alpha}}, \norm{a}_{C^{k,\alpha}})$ so that
\begin{eqnarray}\label{eq62}
\norm{E}_{C^{k,\alpha}(\overline{\mathcal{M}})}\le C\Big(\norm{E_0}_{C^{k,\alpha}(M)}+\norm{f}_{C^{k,\alpha}(\overline{\mathcal{M}})}\Big).
\end{eqnarray}
\end{theorem}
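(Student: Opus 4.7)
The plan is to solve \eqref{eq61} by the method of characteristics. Since $u^0\ge\delta>0$ on $\overline{\mathcal{M}}$, every integral curve of $u$ crosses $\overline{\mathcal{M}}$ transversally to the level sets $\{x^0=r\}$, so I can parametrize characteristics by $r\in[r_0,r_1]$. Dividing \eqref{eq61} by $u^0$, the transport equation becomes $\p_0 E + (u'/u^0)\cdot\nabla_{\!M} E + (a/u^0)E = f/u^0$, and the associated characteristic flow is the solution $\varphi:[r_0,r_1]\times M\to M$ of the non-autonomous ODE
\begin{equation*}
\frac{\dd\varphi}{\dd r}(r,\bar{y}) = \frac{u'}{u^0}\bigl(r,\varphi(r,\bar{y})\bigr),\qquad \varphi(r_0,\bar{y})=\bar{y}.
\end{equation*}
Because $M$ is compact and $u'/u^0\in C^{k,\alpha}(\overline{\mathcal{M}})$, standard ODE theory on manifolds (via a finite atlas on $M$, with the flow re-expressed in local charts) produces $\varphi$ globally on $[r_0,r_1]$; moreover for each $r$ the map $\varphi_r\triangleq\varphi(r,\cdot)\colon M\to M$ is a $C^{k,\alpha}$-diffeomorphism whose norm and the norm of $\varphi_r^{-1}$ are controlled by $\norm{u}_{C^{k,\alpha}}$, $\delta$, $|r_1-r_0|$ and the geometry of $M$.

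Along characteristics, \eqref{eq61} reduces to the linear scalar ODE $\frac{\dd}{\dd r}E(r,\varphi(r,\bar{y})) + \tilde{a}\,E(r,\varphi(r,\bar{y})) = \tilde{f}$, with $\tilde{a}=(a/u^0)(r,\varphi(r,\bar{y}))$ and $\tilde{f}=(f/u^0)(r,\varphi(r,\bar{y}))$. Using the integrating factor $\mu(r,\bar{y})=\exp\bigl(\int_{r_0}^{r}\tilde{a}(s,\bar{y})\,\dd s\bigr)$, this integrates to
\begin{equation*}
E(r,\varphi(r,\bar{y}))=\mu(r,\bar{y})^{-1}E_0(\bar{y})+\mu(r,\bar{y})^{-1}\int_{r_0}^{r}\mu(s,\bar{y})\,\tilde{f}(s,\bar{y})\,\dd s.
\end{equation*}
Setting $E(r,y')\triangleq E(r,\varphi(r,\varphi_r^{-1}(y')))$ gives a candidate solution on $\overline{\mathcal{M}}$, and uniqueness follows immediately since the homogeneous problem ($E_0\equiv 0$, $f\equiv 0$) forces $E$ to vanish along every characteristic.

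For the estimate \eqref{eq62}, I would differentiate the explicit formula in $r$ and in the local coordinates of $M$, using: (i) $C^{k,\alpha}$ bounds on $\varphi_r$ and $\varphi_r^{-1}$ and their derivatives (quantitative ODE theory plus the implicit function theorem, giving dependence only on $\norm{u}_{C^{k,\alpha}}$, $\delta$ and $|r_1-r_0|$); (ii) the fact that composition with a $C^{k,\alpha}$ diffeomorphism of bounded norm is continuous on $C^{k,\alpha}$; (iii) the product/chain rule in H\"older spaces applied to $\mu$ and $\mu^{-1}$, controlled by $\norm{a/u^0}_{C^{k,\alpha}}$. Patching over a finite atlas of $M$ yields \eqref{eq62} with $C$ of the stated form. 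The main obstacle is the careful bookkeeping in step (iii): when $k$ derivatives hit the nested exponentials and the composition $f(s,\varphi(s,\bar{y}))$, the resulting Fa\`a di Bruno expansion must be bounded without accidentally bringing in higher norms of $u$ than $C^{k,\alpha}$; this is routine but notationally heavy, and is what forces the hypothesis $u\in C^{k,\alpha}$ (rather than merely $C^k$) together with the compactness and smoothness of $M$.
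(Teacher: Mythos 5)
Your proposal is correct and follows essentially the same route as the paper's own proof: divide by $u^0$, flow along the non-autonomous characteristic ODE on $M$, integrate the resulting linear scalar ODE with an integrating factor, and deduce the $C^{k,\alpha}$ estimate from the stability of H\"older regularity under composition with the $C^{k,\alpha}$ flow maps $\varphi_t$ and $\varphi_t^{-1}$. The only cosmetic difference is that the paper first normalizes $u^0\equiv1$ before introducing the flow, whereas you carry $u^0$ along explicitly.
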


\begin{proof}
1. Since $u^0\ge\delta>0$ in $\mathcal{M}$, we may rewrite problem \eqref{eq61} as
\begin{eqnarray*}
\p_0E+\frac{u'}{u^0}E+\frac{a}{u^0} E=\frac{f}{u^0}\quad \text{in}\ \mathcal{M},\qquad E=E_0\quad \text{on} \ M_0.
\end{eqnarray*}
Hence thee is no loss of generality by assuming that $u^0\equiv1$ in the sequel.

2. For $u'=u^\alpha\p_\alpha$, consider the Cauchy problem of ordinary differential equations:
$$\frac{\dd {x'}}{\dd t}={u'}(t, {x'}),\quad {x'}(r_0)=\bar{x}\in M.$$
So ${x}'(t)$ is the integral curve of the (time-dependent) vector
field $u'$ on $M$, which passes through the point $\bar{x}$ on $M$ when $t=r_0$.
Since $k\ge 1,$ and $u'$ is bounded, by theorem of ordinary
differential equations, there is one and only one such solution for
$t\in[r_0, r_1]$, and the solution depends on $\bar{x}$ with the same
regularity as $u'$ depending on $\bar{x}$. (See, for example,
\cite[Section 13]{walter}.) We may also write the solution as
${x'}=\varphi(t,\bar{x})=\varphi_t(\bar{x}).$
The transform $\varphi_t: M\to M, \bar{x}\mapsto {x'}$ and its inverse both belong to $C^{k,\alpha}(M)$.

3. For the transport equation $\p_0E+{u'} E+a(x) E=f(x)$ of unknown $E(x)$ (we write here that $x=(t, {x'})$; that is, $x^0=t$),
since $E(x)=E(t,{x'})=E(t, \varphi_t(\bar{x}))$, we have
$\frac{\dd E}{\dd t}+a(t,\varphi_t(\bar{x}))E=f(t, \varphi_t(\bar{x})).$
The solution is given by
\begin{eqnarray}\label{eq63}
E(x)&=&E(t, \varphi_t(\bar{x}))
=\exp\Big(-\int_{r_0}^ta(s, \varphi_s(\bar{x}))\,\dd s\Big)\nonumber\\
&&\times\left(E_0(\bar{x})+\int_{r_0}^t\exp\Big(\int_{r_0}^sa(\tau, \varphi_\tau(\bar{x}))\,\dd\,\tau\Big)f(s, \varphi_s(\bar{x}))\,\dd s\right).
\end{eqnarray}
This proves the existence.

4. Since products and compositions of $C^{k,\alpha}$ functions are still in $C^{k,\alpha}$, and note that $\bar{x}=(\varphi_t)^{-1}(x')\in C^{k,\alpha}(M)$, $(\varphi_t)^{-1}$ is $C^{k+1,\alpha}$ with respect to $t$,  we deduce that the right-hand side of \eqref{eq63} belongs to $C^{k,\alpha}(\overline{\mathcal{M}})$, and the estimate \eqref{eq62} follows.
\end{proof}

\subsection{Solvability and estimate of div-curl system on sphere $S^2$}
The following result is a refined version of that appeared in \cite{CY2013}, so we omit the proof here.
\begin{theorem}\label{them62}
For $\alpha\in (0,1)$ and $k\in\mathbb{N}$,  there exists a unique 1-form $\omega$ in $C^{k+1}(S^2)$ that solves
\begin{eqnarray}\label{eq571}
\dd\omega=\chi,\qquad \dd^*\omega=\psi,
\end{eqnarray}
if $\chi$ is a 2-form  in $C^{k,\alpha}({S}^2)$ and $\psi$ is a function in $C^{k,\alpha}({S}^2)$,  and there hold
\begin{eqnarray}\label{eq572}
\int_{{S}^2}\chi=0,\qquad \int_{{S}^2}\psi\,\vol=0.
\end{eqnarray}

Furthermore, there is constant $C>0$ so that
\begin{eqnarray}\label{eq573}
\norm{\omega}_{C^{k+1,\alpha}(S^2)}\le C\Big(\norm{\chi}_{C^{k,\alpha}({S}^2)}+
\norm{\psi}_{C^{k,\alpha}({S}^2)}\Big).
\end{eqnarray}
\end{theorem}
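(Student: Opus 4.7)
The plan is to reduce the problem to scalar Poisson equations on $S^2$ via Hodge decomposition, exploiting the fact that the first Betti number of $S^2$ is zero. Since $S^2$ is a closed oriented Riemannian manifold, standard Hodge theory asserts that
\[
\Omega^k(S^2) \;=\; \dd\bigl(\Omega^{k-1}\bigr) \;\oplus\; \dd^*\bigl(\Omega^{k+1}\bigr) \;\oplus\; \mathcal{H}^k,
\]
where $\mathcal{H}^k=\ker \Delta$ is the space of harmonic $k$-forms, with $\Delta=\dd\dd^*+\dd^*\dd$ the Hodge Laplacian. Because $H^1(S^2;\mathbb{R})=0$, we have $\mathcal{H}^1=\{0\}$, so any 1-form admits an orthogonal decomposition $\omega=\dd f+\dd^*\eta$ with $f$ a function and $\eta$ a 2-form; we normalize $f$ and $\eta$ to be orthogonal to $\mathcal{H}^0$ and $\mathcal{H}^2$ respectively, i.e.\ $\int_{S^2}f\,\vol=0$ and $\int_{S^2}\eta=0$.

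Applying $\dd^*$ and $\dd$ to this ansatz and using $\dd^2=0=(\dd^*)^2$, the system \eqref{eq571} becomes
\[
\dd^*\dd f=\psi \quad\text{on}\ \ S^2, \qquad \dd\dd^*\eta=\chi \quad\text{on}\ \ S^2,
\]
that is, $\Delta f=\psi$ on functions and $\Delta\eta=\chi$ on 2-forms. Writing $\eta=g\,\vol$ and $\chi=h\,\vol$ (possible since $\dim S^2=2$) and using that the Hodge $*$ operator intertwines the Laplacians, the 2-form equation reduces to $\Delta g=h$ on functions. The solvability condition for a Poisson equation on the closed manifold $S^2$ is orthogonality to the kernel of $\Delta$, which for functions consists only of constants; thus the hypotheses $\int_{S^2}\psi\,\vol=0$ and $\int_{S^2}\chi=\int_{S^2}h\,\vol=0$ in \eqref{eq572} are exactly what is needed, and $f,g$ are uniquely determined by the zero-mean normalization.

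Uniqueness of $\omega$ itself follows immediately: if two solutions exist, their difference $\omega^\sharp$ satisfies $\dd\omega^\sharp=0$ and $\dd^*\omega^\sharp=0$, hence $\Delta\omega^\sharp=0$, so $\omega^\sharp\in\mathcal{H}^1=\{0\}$. For the regularity estimate, I apply the standard Schauder theory for the Laplace--Beltrami operator on the closed manifold $S^2$ (either coordinate-free, or via stereographic charts with a partition of unity and the local estimates of \cite[Chapter~6]{GT}) to obtain
\[
\norm{f}_{C^{k+2,\alpha}(S^2)}\le C\norm{\psi}_{C^{k,\alpha}(S^2)}, \qquad \norm{g}_{C^{k+2,\alpha}(S^2)}\le C\norm{h}_{C^{k,\alpha}(S^2)}\le C\norm{\chi}_{C^{k,\alpha}(S^2)},
\]
using the zero-mean normalization to absorb lower-order terms. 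Since $\omega=\dd f+\dd^*\eta$ involves one derivative of $f$ and $g$, one obtains $\omega\in C^{k+1,\alpha}(S^2)$ together with \eqref{eq573}.

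No serious obstacle is expected: the only mildly delicate point is the passage between the intrinsic Hodge--theoretic formulation on $S^2$ and scalar Schauder estimates, which is entirely standard for a smooth compact surface. Because the result is essentially a refinement of Lemma~A.6 in \cite{CY2013} (sharper H\"older exponents on both sides), the bulk of the argument merely tracks constants through the reduction described above.
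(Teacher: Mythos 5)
Your proof is correct, but it takes a genuinely different (though closely related) route from the paper. The paper solves the single vector Poisson equation $\Delta\omega=\dd^*\chi+\dd\psi$ for the 1-form $\omega$ directly (uniquely solvable because $\mathcal{H}^1(S^2)=0$), reads off the estimate \eqref{eq573} from Schauder theory for that one equation, and then \emph{verifies} a posteriori that $\dd\omega=\chi$ and $\dd^*\omega=\psi$: one shows $\Delta(\dd\omega-\chi)=0$ and $\Delta(\dd^*\omega-\psi)=0$, identifies the harmonic spaces $\mathcal{H}^2=\mathbb{R}\,\vol$ and $\mathcal{H}^0=\mathbb{R}$, and uses the integral conditions \eqref{eq572} to kill the harmonic parts. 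You instead Hodge-decompose the \emph{unknown} as $\omega=\dd f+\dd^*\eta$ up front, which splits the div-curl system cleanly into two decoupled scalar Poisson equations $\Delta f=\psi$ and $\Delta g=h$ (after identifying $\eta=g\,\vol$, $\chi=h\,\vol$), and the hypotheses \eqref{eq572} become exactly the Fredholm solvability conditions. The two approaches are dual in spirit: both lean on $\mathcal{H}^1(S^2)=\{0\}$ for existence/uniqueness and on $b_0=b_2=1$ to exploit the integral conditions. Your reduction has the advantage of invoking only scalar Schauder estimates on $S^2$, which are the most classical form of the theory, at the modest cost of the extra bookkeeping in re-assembling $\omega$ and tracking one derivative loss through $\dd$ and $\dd^*$; the paper's version is more compact because it solves a single equation and never re-assembles, but it requires Schauder estimates for the Hodge Laplacian acting on 1-forms (a first-order system in each coordinate patch), which is a marginally less elementary citation.
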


\subsection{Estimates of higher-order terms}

We list below the estimates of some higher-order terms, which were used in Section \ref{sec531} and Section \ref{sec532} in studying the iteration mapping $\mathcal{T}$.

\begin{lemma}\label{lem61}
There is a constant $C$ depending only on the background solution $U_b$ so that if $\psi\in\mathcal{K}_\sigma, \check{U}\in O_\delta$, and $U^-$ satisfying \eqref{55015}, then one has the following inequalities:
\begin{eqnarray}
\norm{\bar{g}_4}_{C^{3,\alpha}(S^2)}&\le& C\Big(\delta^2+\sigma^2+\varepsilon^2+\varepsilon\Big),\label{616}\\
\norm{f}_{C^{1,\alpha}(\bar{\Omega})}&\le& C(\delta^2+\sigma^2+\varepsilon^2+\varepsilon), \label{610}\\
\norm{\bar{f}_1}_{C^{2,\alpha}(\bar{\Omega})}&\le& C\Big(\delta^2+\sigma^2+\varepsilon^2+\varepsilon\Big),\label{611}\\
\norm{\bar{g}_0}_{C^{3,\alpha}(S^2)}&\le& C\Big(\delta^2+\sigma^2+\varepsilon^2+\varepsilon\Big),\label{617}\\
\norm{\bar{g}_5}_{C^{2,\alpha}(S^2)}&\le& C\Big(\delta^2+\sigma^2+\varepsilon^2+\varepsilon\Big),\label{618}\\
\norm{\bar{g}_6}_{C^{2,\alpha}(S^2)}&\le& C\Big(\delta^2+\sigma^2+\varepsilon^2+\varepsilon\Big),\label{614}\\
\norm{\bar{g}_7}_{C^0(S^2)}
&\le&C\Big(\delta^2+\sigma^2+\varepsilon^2+\varepsilon\Big),\label{613}\\
\norm{\bar{g}_8}_{C^{1,\alpha}(S^2)}&\le& C\Big(\delta^2+\varepsilon^2+\sigma^2+\varepsilon\Big).\label{615}
\end{eqnarray}
In the expressions of these higher order terms,  we have set $U=\check{U}+U_b^+(\frac{r_1-\psi}{r_1-r_b}(y^0-r_b)+\psi(y'), y').$
\end{lemma}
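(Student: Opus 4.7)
The plan is a case-by-case verification, each case being a routine application of H\"older algebra together with the bookkeeping provided by Definition \ref{def501}. The key observation is that every one of the quantities $\bar g_0,\ldots,\bar g_8,f,\bar f_1$ is, by construction, a higher-order term: writing each as a polynomial in the variables $U^--U_b^-$, $\hat U=U-U_b^+$, $\hat r=r^p-r_b$, $\psi^p$, their derivatives $DU^-,D\hat U,D^2\hat U,D\psi,D^2\psi,D^3\psi$, and smooth (analytic) coefficients depending on the background solution, one sees that every term is either (i) linear in a component of $U^--U_b^-$ or its derivative, or (ii) contains at least two factors drawn from the list of perturbations. Under the standing hypotheses, case (i) contributes $O(\varepsilon)$ by \eqref{55015}, while case (ii) contributes a quadratic $O(\delta^2+\sigma^2+\varepsilon^2+\delta\sigma+\delta\varepsilon+\sigma\varepsilon)$, and the cross-products are absorbed into $\delta^2+\sigma^2+\varepsilon^2$ by $ab\le\tfrac12(a^2+b^2)$. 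This yields precisely the right-hand side $C(\delta^2+\sigma^2+\varepsilon^2+\varepsilon)$.

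The workhorse estimates I would invoke are the H\"older product inequality $\norm{fg}_{C^{k,\alpha}}\le C\norm{f}_{C^{k,\alpha}}\norm{g}_{C^{k,\alpha}}$, the fact that composition with a smooth function preserves $C^{k,\alpha}$ norms with linear control on the argument (applied to terms like $\rho=(p/A(s))^{1/\gamma}$ and to the smooth coefficients of the Euler system), and the trace/pull-back estimate $\norm{\psi^* f}_{C^{k,\alpha}(S^2)}\le C\norm{f}_{C^{k,\alpha}(\overline{\mathcal M})}$ combined with $\norm{i^* f-\psi^*f}_{C^{k,\alpha}(S^2)}\le C\norm{\psi-r_b}_{C^{k,\alpha}(S^2)}\norm{\p_0 f}_{C^{k,\alpha}}$. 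The choice of norms is calibrated so that each term has a comfortable derivative margin: $\psi\in C^{4,\alpha}$ gives $D^3\psi\in C^{1,\alpha}$; $p\in C^{3,\alpha}$ gives $D^2p\in C^{1,\alpha}$; $u,s,E\in C^{2,\alpha}$ gives $DU\in C^{1,\alpha}$; and the trace bound $\norm{i^*\check U}_{C^{3,\alpha}(S^2)}\le\delta$ provides one extra derivative of tangential regularity on the shock-front exactly where it is needed for $\bar g_4,\bar g_5,\bar g_8$.

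Term by term: the boundary terms $\bar g_0,\bar g_2,\bar g_4$ are estimated in $C^{3,\alpha}(S^2)$ directly from the R-H formulas in \S\ref{sec522}, using only $i^*\hat U\in C^{3,\alpha}$, $D\psi\in C^{3,\alpha}$, and $U^-\in C^4$; $\bar g_5$ follows from \eqref{eqbarg5} together with the boundary form of \eqref{eq215} in \S5.3.1, estimated in $C^{2,\alpha}(S^2)$; $\bar g_7$ from \eqref{eq528} loses no derivative since the integral of $g_5$ only needs $C^0$; $\bar g_6=\mu_0\bar g_5+\dd^*\bar g_0$ loses one derivative in $\bar g_0$, which is why the bound on $\bar g_0$ is required one order higher than on $\bar g_6$; and $\bar g_8=-\Delta' g_2+\mu_7 g_2+\mu_0\mu_2 g_5+\mu_2\dd^* g_0$ loses two derivatives in $g_2$ and one in $g_0$, which is why we only demand $C^{1,\alpha}(S^2)$ on the left. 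In each case, the extra $O(\psi-r_b)D\hat p$ remainder built into \eqref{eqbarg1}--\eqref{eqbarg2} contributes $\sigma\cdot\delta$ and is absorbed. For the interior terms $f$ and $\bar f_1$, I would decompose $f=(y^0)^2(F+\tilde F_1+\tilde F_2)$ as in \eqref{eq556}: $F$ is directly the nonlinear remainder from \S4.2 estimated in $C^{1,\alpha}(\overline\Omega)$ via H\"older algebra; $\tilde F_1$ from \eqref{eq554add} is quadratic by inspection; $\tilde F_2$ from \eqref{eq550} is small because of \eqref{eq576} in Lemma \ref{lem53}, which gives $|(\varphi_{y^0})^{-1}y'-y'|\le C\norm{u'}_{C^0}\le C\delta$, so the difference $A(s)(r_b,(\varphi_{y^0})^{-1}y')-A(s)(r_b,y')$ is controlled by $\delta\cdot\norm{i^*\widehat{A(s)}}_{C^{1,\alpha}}$, which is quadratic. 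The term $\bar f_1$ from \eqref{eq572add2} is handled identically.

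The hard part is not any analytic subtlety but the sheer bookkeeping of the many terms. The only place where care is needed is in matching the regularity of each factor against the loss of derivatives incurred by $\Delta'$ and $\dd^*$ in $\bar g_6$ and $\bar g_8$; this is exactly why we gave the bound on $\bar g_0$ in $C^{3,\alpha}(S^2)$ and on $\bar g_4$ in $C^{3,\alpha}(S^2)$ rather than in the lower-order norms in which they eventually get used. Once this matching is set up correctly, each inequality reduces mechanically to the product and composition rules above.
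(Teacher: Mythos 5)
Your proposal is correct and follows essentially the same route as the paper: a case-by-case verification that each $\bar g_k$, $f$, $\bar f_1$ is a higher-order term in the sense of Definition~\ref{def501}, estimated via H\"older algebra, the flow-map bound of Lemma~\ref{lem53} for $\tilde F_2$, and the derivative-counting you describe for the loss incurred by $\Delta'$ and $\dd^*$ in $\bar g_6$ and $\bar g_8$. The paper merely writes out the explicit $O(1)$-decomposition of each term (e.g.\ \eqref{619}, \eqref{623}, \eqref{625}) before applying the same product/composition/trace estimates and the same $ab\le\frac12(a^2+b^2)$ absorption.
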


\begin{proof}
1. We firstly consider the terms $\bar{g}_k$ ({\it i.e.}, $g_k$)  defined through \eqref{eq512}--\eqref{eq515} for $k=1,2,3,4$. Checking (2.27) in \cite[p.2522]{CY2013} shows that $\bar{g}_k$ are of the form
\begin{eqnarray}\label{619}
\bar{g}_k&=&O(1)i^*(\check{U})(\psi-r_b)+O(1)i^*(\check{U})^2+O(1)i^*(\check{U})\psi^*(U^--U_b^-)\nonumber\\
&&+O(1)\psi^*(U^--U_b^-)+O(1)(D\psi) i^*(\check{U})+O(1)(\psi-r_b)^2.
\end{eqnarray}
It follows easily that
\begin{eqnarray}\label{620}
\norm{\bar{g}_k}_{C^{3,\alpha}(S^2)}\le C(\sigma\delta+\delta^2+\varepsilon\delta+\varepsilon+\delta\sigma+\sigma^2)\le
C\Big(\delta^2+\sigma^2+\varepsilon^2+\varepsilon\Big).
\end{eqnarray}
Thus we proved \eqref{616}.

2. The term $f$ is defined by \eqref{eq556}.  For $\tilde{F}_2$ (see \eqref{eq550}), we note that
\begin{eqnarray}\label{621}
A(s)(r_b, (\varphi_{y^0})^{-1}(y'))-A(s)(r_b, y')=O(1)(D(i^*\check{U}))\check{U},
\end{eqnarray}
hence according to the proof of Lemma \ref{lem53}, its $C^{1,\alpha}(\bar{\Omega})$ norm is controlled by $C\delta^2$.  Then using \eqref{619}, we find that
\begin{eqnarray}\label{622}
\norm{\tilde{F}_2}_{C^{1,\alpha}(\bar{\Omega})}\le C(\delta^2+\sigma^2+\varepsilon^2+\varepsilon).
\end{eqnarray}
For the term $\tilde{F}_1$ (see \eqref{eq554add}), it can be written as
\begin{eqnarray}\label{623}
\tilde{F}_1&=&O(1)(\psi-r_b)(D^2\check{p}+D\check{p}+\check{U})
+O(1)(D^2\psi)(D\check{p})\nonumber\\
&&+O(1)(D\psi)(D^2\check{p})+O(1)(D\psi)(D\check{p}).
\end{eqnarray}
Hence
\begin{eqnarray}\label{624}
\norm{\tilde{F}_1}_{C^{1,\alpha}(\bar{\Omega})}\le C\delta\sigma\le C(\delta^2+\sigma^2).
\end{eqnarray}

Next we consider the term $F$ (see \eqref{eq441}). For $F_2$ (see \eqref{eq447add}), taking into account of the transform $\Psi$ (see \eqref{259}), it is of the form
\begin{eqnarray}\label{625}
F_2=O(1)(\check{U})^2+O(1)(D\check{U})\check{U}
+O(1)(D^2\check{p})\check{U}+O(1)(D\check{U})^2,
\end{eqnarray}
where the quantities $O(1)$ may contain $\psi-r_b$, $D\psi$ or $D^2\psi$. Therefore one has
\begin{eqnarray}\label{626}
\norm{{F}_2}_{C^{1,\alpha}(\bar{\Omega})}\le C\delta^2.
\end{eqnarray}
For $H_3$ (see \eqref{eq433}), it can be written as
$H_3=O_1\check{U}^2,$
with $O_1$ a bounded quantity containing $D^2p, (\psi-r_b), D\psi$ and $Dp$.
Then
$F_1=O_1(\check{U}^2+\check{U}(D\check{U})+(D\check{U})^2)$
and
\begin{eqnarray}\label{629}
\norm{{F}_1}_{C^{1,\alpha}(\bar{\Omega})}\le C\delta^2.
\end{eqnarray}
Summing up the above estimates \eqref{622}\eqref{624}\eqref{626}\eqref{629}, we get \eqref{610}.

3. The term $\bar{f}_1$ is given by \eqref{eq572add2}, where $h_1$ is defined by \eqref{eq465}. We may write
\begin{eqnarray*}
\bar{f}_1=O(1)\check{U}^2+O(1)\check{U}(\psi-r_b)+O(1)(D\check{p})(\psi-r_b)
+O(1)(D\check{p})(D\psi),
\end{eqnarray*}
where $O(1)$ may contain $D\psi$ and $\psi-r_b$. Therefore it is straightforward to check that
\begin{eqnarray*}
\norm{\bar{f}_1}_{C^{2,\alpha}(\bar{\Omega})}\le C(\delta^2+\sigma^2),
\end{eqnarray*}
which implies \eqref{611}.

4. The terms $\bar{g}_k$ for $k=0, 5,6,7,8$ were defined in \eqref{eqbarg1} and  \eqref{eqbarg2}. We firstly consider $g_0$ (see \eqref{eq59}). It is of the form
\begin{eqnarray*}
\bar{g}_0=O(1)(i^*\check{U})^2+O(1)(i^*\check{U})\psi^*(U^--U_b^-)+O(1)\psi^*(U^--U_b^-),
\end{eqnarray*}
where $O(1)$ may contain $D\psi$. Therefore
\begin{eqnarray*}
\norm{\bar{g}_0}_{C^{3,\alpha}(S^2)}\le C(\delta^2+\delta\varepsilon+\varepsilon).
\end{eqnarray*}
Thus we proved \eqref{617}.

5. Next we consider $\bar{g}_5$, where $g_5$ was given by \eqref{eqbarg5}, and $\tilde{G}$ was defined by \eqref{eq543add3}. We see
\begin{eqnarray*}
\bar{g}_5&=&O(1)(i^*\check{U})(D(i^*\check{U}))+O(1)i^*\check{U}(\psi-r_b)+O(1)D\psi(i^*\check{U})^2\nonumber\\
&&+O(1)D\psi(i^*\check{U})+O(1)D\psi(i^*\check{U})(D(i^*\check{U}))+O(1)(i^*\check{U})^3\nonumber\\
&&+O(1)(i^*\check{U})^2(D(i^*\check{U}))
+O(1)(i^*\check{U})^2
+O(1)D\psi(i^*\check{U})^2(D(i^*\check{U}))\nonumber\\
&&+O(1)(\psi-r_b)^2+O(1)\psi^*(U^--U_b^-)+O(1)g_2+O(1)g_4+O(1)(\psi-r_b)D\check{p}.
\end{eqnarray*}
Hence we have
\begin{eqnarray*}
\norm{\bar{g}_5}_{C^{2,\alpha}(S^2)}\le C(\delta^2+\sigma^2+\varepsilon^2+\varepsilon)
\end{eqnarray*}
as desired.

6. Since $\bar{g}_6=\mu_0\bar{g}_5+\dd^*\bar{g}_0$, the estimate \eqref{614} follows directly from \eqref{617}\eqref{618}.

7. We note that $g_7=\frac{\mu_2}{4\pi\mu_6}\int_{S^2}g_5\,\vol-g_2$, so
\begin{eqnarray}
\bar{g}_7=O(1)\bar{g}_5+O(1)\bar{g}_2+O(1)(\psi-r_b)(Di^*\check{p})
\end{eqnarray}
and from \eqref{618}, \eqref{620}, we get  \eqref{613}.

8. Finally we consider $\bar{g}_8$, where $g_8=-\Delta'g_2+\mu_7g_2+\mu_2g_6$. So \eqref{615} follows from \eqref{614} and \eqref{620}.
\end{proof}

\begin{lemma}\label{lem62}
Under the assumptions stated in Section \ref{sec532}, there is a constant $C$ depending only on the background solution so that
\begin{eqnarray}
\norm{\bar{g}_4^{(1)}-\bar{g}_4^{(2)}}_{C^{2,\alpha}(S^2)}&\le& C\varepsilon Q,\label{636}\\
\norm{f^{(1)}-f^{(2)}}_{C^{\alpha}(\bar{\Omega})}&\le& C\varepsilon Q,\label{637}\\
\norm{\bar{f}_1^{(1)}-\bar{f}_1^{(2)}}_{C^{1,\alpha}(\bar{\Omega})}&\le& C\varepsilon Q,\label{638}\\
\norm{\bar{g}_0^{(1)}-\bar{g}_0^{(2)}}_{C^{2,\alpha}(S^2)}&\le& C\varepsilon Q,\label{639}\\
\norm{\bar{g}_5^{(1)}-\bar{g}_5^{(2)}}_{C^{1,\alpha}(S^2)}&\le& C\varepsilon Q,\label{640}\\
\norm{\bar{g}_6^{(1)}-\bar{g}_6^{(2)}}_{C^{1,\alpha}(S^2)}&\le& C\varepsilon Q,\label{641}\\
\norm{\bar{g}_7^{(1)}-\bar{g}_7^{(2)}}_{C(S^2)}&\le& C\varepsilon Q,\label{642}\\
\norm{\bar{g}_8^{(1)}-\bar{g}_8^{(2)}}_{C^\alpha(S^2)}&\le& C\varepsilon Q.\label{643}
\end{eqnarray}
Recall that $Q$ is defined by \eqref{eq5106}.
\end{lemma}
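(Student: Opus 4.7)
The plan is to exploit the structural observation already established in Lemma \ref{lem61}: each of the higher-order terms $\bar{g}_k$, $f$, $\bar{f}_1$ is a polynomial-type expression in which every monomial contains at least one factor that is small (namely a component of $\check{U}$, of $\psi - r_b$, of $D\psi$, of $U^- - U_b^-$, or of $g_2, g_4$ themselves). Writing schematically $H = H(V)$ where $V$ collects $U$, $U^-$, $\psi$ and their relevant derivatives, the differences take the form
\begin{equation*}
H^{(1)} - H^{(2)} = \int_0^1 DH\bigl(V^{(2)} + \tau(V^{(1)}-V^{(2)})\bigr)\,\dd\tau \cdot (V^{(1)}-V^{(2)}),
\end{equation*}
and the structural smallness ensures that $DH$, evaluated on the convex combination of iterates in $\mathcal{K}_{C_*\varepsilon}\times O_{C_*\varepsilon}$, is itself of order $O(\delta+\sigma+\varepsilon)=O(\varepsilon)$. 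This yields precisely the factor $\varepsilon$ in front of $Q$.

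Concretely, I would proceed estimate by estimate in the same order as in Lemma \ref{lem61}. First, for the algebraic quantities $\bar{g}_0, \bar{g}_2, \bar{g}_4$, I use their monomial expansions (the formula \eqref{619} read at two iterates) together with the tame product inequality in $C^{k,\alpha}(S^2)$. The pulled-back quantities $i^*\check{U}^{(k)}$, $\psi^*(U^--U_b^-)^{(k)}$ and $(U_b^+)^{(k)}$ introduce differences of the form ``function evaluated on $S^{\psi^{(1)}}$ minus the same function evaluated on $S^{\psi^{(2)}}$''; for these I invoke the bounds \eqref{eq5139}--\eqref{eq5141} together with $\norm{U_b^+ - U_b^-}_{C^k}\le C$. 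This immediately delivers \eqref{636}, \eqref{639} and, after the identities $\bar{g}_6=\mu_0\bar{g}_5+\dd^*\bar{g}_0$, $g_7 = \frac{\mu_2}{4\pi\mu_6}\int g_5\,\vol - g_2$, and $g_8=-\Delta'g_2+\mu_7 g_2 + \mu_2 g_6$, also \eqref{641}, \eqref{642}, \eqref{643}, provided \eqref{640} has been established.

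The estimate \eqref{640} for $\bar{g}_5$ is slightly heavier because $g_5$ involves first order derivatives of $\hat p$ through $\tilde G$; here I split the difference into a ``boundary-data part'' (handled as above) plus a linear part in $D\check p^{(1)}-D\check p^{(2)}$ and $D(\psi^{(1)}-\psi^{(2)})$ whose coefficients are $O(\varepsilon)$ by smallness of the iterates. The estimate \eqref{637} for $f^{(1)}-f^{(2)}$ decomposes via \eqref{eq556} into contributions from $F$, $\tilde F_1$, $\tilde F_2$: for $F$ use \eqref{625} and \eqref{629}; for $\tilde F_1$ use \eqref{623}; and for $\tilde F_2$ the only nontrivial piece is the flow-trajectory term $A(s)(r_b,(\varphi_{y^0})^{-1}(y'))-A(s)(r_b,y')$, which I handle by writing it as a superposition $(\varphi^{(1)})^{-1}-(\varphi^{(2)})^{-1}$ plus a difference of $A(s)$'s, and then use the $C^{k,\alpha}$-dependence of flows on the advecting velocity field (Theorem \ref{thm61}) to trade one derivative for an $O(\varepsilon)$ prefactor. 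Finally \eqref{638} for $\bar f_1$ is a direct expansion of \eqref{eq572add2} and \eqref{eq465} with mean value theorem applied to the rational coefficients in $y^0,\psi$.

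The main obstacle is book-keeping the regularity budget: the target norms on the left of \eqref{636}--\eqref{643} are one order lower than the norms in which the iteration set $\mathcal{K}_{C_*\varepsilon}\times O_{C_*\varepsilon}$ is defined, and this is essential. The reason is that differentiating the monomials in \eqref{619}, \eqref{623}, \eqref{625} by the mean value theorem costs one derivative on the ``large'' factor; only by measuring $H^{(1)}-H^{(2)}$ in the lower norm do we keep the small factor small in the correct space, so that Moser-type product estimates close. Throughout, one must verify that whenever $\bar g_k$ involves $D^j\psi$ or $D^j\check U$ with $j$ equal to the top regularity (e.g., $D^3\psi$ in $\bar g_8$ or $D^2\check U$ in $\bar f$-type terms), the companion factor in that monomial is \emph{already} small in the higher Schauder norm by membership in the iteration set, so that the difference can be split as ``(small coefficient bounded in $C^{k+1,\alpha}$) $\times$ (top-order difference bounded in $C^{k-1,\alpha}$)''. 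Once this split is set up systematically, the individual inequalities are routine and produce the uniform factor $\varepsilon Q$, completing the proof of Lemma \ref{lem62}.
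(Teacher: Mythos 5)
Your proposal is correct and follows essentially the same route as the paper's own proof: both exploit the monomial (Definition \ref{def501}) structure of the higher-order terms, split differences $H^{(1)}-H^{(2)}$ by adding and subtracting cross terms (the integral mean-value form you use is just a systematic packaging of this), invoke the pullback comparison bounds \eqref{eq5139}--\eqref{eq5141} and the $O(\varepsilon)$-smallness of one factor in each monomial to squeeze out the prefactor $\varepsilon$ in front of $Q$, and rely on the one-order norm drop between the iteration set and the contraction norm to absorb the top-order derivatives. The paper proves two representative monomial estimates and declares the rest analogous, while you additionally make the regularity-budget point explicit; these are the same argument, presented at the same level of detail.
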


\begin{proof}
To prove \eqref{636}, we use \eqref{619}. Returning to (2.27) in \cite{CY2013}, some tedious computations show that  for each coefficient $O(1)$, it holds
$\norm{O(1)^{(1)}-O(1)^{(2)}}_{C^{2,\alpha}(S^2)}\le CQ.$
Hence, as an example,  since
\begin{eqnarray*}
&&O(1)^{(1)}(\psi^{(1)})^*(U^--U_b^-)-O(1)^{(2)}(\psi^{(2)})^*(U^--U_b^-)\nonumber\\
&=&O(1)^{(1)}\Big((\psi^{(1)})^*(U^--U_b^-)-(\psi^{(2)})^*(U^--U_b^-)\Big)\nonumber\\
&&+(O(1)^{(1)}-O(1)^{(2)})(\psi^{(2)})^*(U^--U_b^-),
\end{eqnarray*}
by \eqref{eq5141} we have
\begin{eqnarray*}
\norm{O(1)^{(1)}(\psi^{(1)})^*(U^--U_b^-)-O(1)^{(2)}(\psi^{(2)})^*(U^--U_b^-)}_{C^{2,\alpha}(S^2)}\le C\varepsilon Q.
\end{eqnarray*}
The other example is to consider
\begin{eqnarray*}
&&O(1)^{(1)}i^*(\check{U}^{(1)})(\psi^{(1)}-r_b)-O(1)^{(2)}i^*(\check{U}^{(2)})(\psi^{(2)}-r_b)\nonumber\\
&=&O(1)^{(1)}i^*(\check{U}^{(1)})(\psi^{(1)}-\psi^{(2)})
+(\psi^{(2)}-r_b)O(1)^{(1)}i^*(\check{U}^{(1)}-\check{U}^{(2)})\nonumber\\
&&+(\psi^{(2)}-r_b)(O(1)^{(1)}-O(1)^{(2)})i^*\check{U}^{(2)},
\end{eqnarray*}
and we have straightforwardly that
\begin{eqnarray*}
\norm{O(1)^{(1)}i^*(\check{U}^{(1)})(\psi^{(1)}-r_b)-O(1)^{(2)}i^*(\check{U}^{(2)})(\psi^{(2)}-r_b)}_{C^{2,\alpha}(S^2)}\le C\varepsilon Q.
\end{eqnarray*}
Since all other terms are of similar form, they could be estimated similarly and hence we omit the details.
\end{proof}

{\bf Acknowledgments.}  Li Liu was supported by National
Natural Science Foundation of China (NNSFC) under Grants No.11101264, No.11371250. 
Gang Xu was supported by NNSFC under Grants No.11101190, No.11371189 and No.11271164. Hairong Yuan (the corresponding author)  was supported by  NNSFC under Grants No.11371141. Yuan would also like to thank the Wuhan Institute of Physics and Mathematics, Chinese Academy of Science for the hospitality, where the manuscript was finished.


\end{document}